\newtheorem{theorem}{Theorem}[section]
\newtheorem{blank}{}[section]
\newtheorem{lemma}[theorem]{Lemma}
\newtheorem{corollary}[theorem]{Corollary}
\newtheorem{example}[theorem]{Example}
\newtheorem{definition}[theorem]{Definition}
\newcommand{\al}{{\alpha}}
\newcommand{\be}{{\beta}}
\newcommand{\ep}{{\varepsilon}}
\newcommand{\ga}{{\gamma}}
\newcommand{\si}{{\sigma}}
\newcommand{\om}{{\omega}}
\begin{document}

\title{On the Resolution Graph of a Plane Curve 
       }

\author{Jo\~ao Cabral \and Orlando Neto \and Pedro C. Silva}

\begin{abstract}
We show that the resolution graph of a plane curve singularity admits a canonical decomposition into elementary graphs.
\end{abstract}

\maketitle

\section{Introduction}
\noindent
The main purpose of this paper is to give a canonical decomposition of the resolution graph of an arbitrary plane curve that generalizes the decomposition of the resolution graph of an irreducible plane curve introduced by Brieskorn and Knorrer remark in \cite{BRIES}. The elementary graphs of this decomposition are of two types: \em smooth type \em (all branches are smooth, see Example \ref{EXE1}) and \em cusp type \em  (all branches are "cusps" and the graph is linear, see Example \ref{EXE2}).

\noindent
One can find in \cite{NESI1} a closed form description for the local fundamental group of a plane curve, given the Puiseux expansions of the branches of the curve or its Eggers tree. One of the motivations of this paper is to clear the way to give a closed form description of the local fundamental group of a plane curve from its equations (or its resolution graph) that can be implemented in a computer (\cite{paper}). 
Yang Jingen gave in  \cite{CHINES}  an interesting characterization of the resolution graphs. 

\noindent
Section \ref{INVSEC} recalls several topological invariants of plane curves and its behaviour by blow up. 
Theorem \ref{CONTR} shows that a normal crossings graph is a resolution graph if and only if it is contractible. 
Readers willing to accept this result may skip most of sections \ref{INVSEC} and \ref{COMBSEC}.
Sections \ref{TSMOOTH},  \ref{CONTINUOUS} and \ref{CSMOOTH} are dedicated to the characterization of the elementary graphs.
Section \ref{DECOM} proves the main result.

\section{Invariants of Plane Curve Singularities}\label{INVSEC}

\noindent
Let $Y$ be the germ of an irreducible plane curve defined by $f\in\mathbb C\{x,y\}$.
Assume that the tangent cone of $Y$ is transversal to $\{x=0\}$ and $Y$ has multiplicity $k$.
By the Puiseux Theorem there are a positive integer $n$ and
complex numbers $c_i$, $i\ge n$, such that $(n,k)=1$, $c_n\not=0$
and $Y$ admits the local parametrization 
\begin{equation}\label{PU1}
x=t^k,    \qquad    y=\sum_{i\ge n}c_it^i.  
\end{equation}
Consider the injective morphism from $\mathbb C\{x\}$ to $\mathbb C\{t\}$ that takes $x$ into $t^k$.
We will sometimes denote $t$ by $x^{\frac{1}{k}}$. For $\alpha \in\mathbb Q_{+}$, set $a_\al=c_i$ if $\alpha=i/k$ and $a_\al=0$ if $k\al\not\in\mathbb Z$.
We say that (\ref{PU1}) and $y=\sum_\al a_\al x^\al$
 are \em Puiseux expansions \em of $Y$ relatively to the system of local coordinates $(x,y)$.
 
\noindent
There is a certain ambiguity on the determination of the coefficients of the Puiseux expansion of a plane curve.
When we compare two Puiseux expansions 
\begin{equation}\label{TWOPU}
y=\sum_\al a_\al x^\al ~ \mbox{ and } ~ y=\sum_\al b_\al x^\al,
\end{equation}
we shall assume that $a_\al=b_\al$ for $\al\le \be$ if $\sum_{\al\le \be} a_\al x^\al$ and $\sum_{\al\le \be} b_\al x^\al$ 
are parametrizations of the same plane curve  (see Section  4.1 of \cite{CTCW}).

\noindent
If $Y$ admits the Puiseux expansion $y=\sum_\al a_{\al} x^\al$, there are unique rational numbers $\alpha_1<\cdots<\alpha_g$ such that:
\begin{enumerate}
\item For $1\le k\le g$, $a_{\al_k}\not=0$.
\item Set $\al_{g+1}=+\infty$. 
If $a_\al\not = 0$ and $\al<\al_{k+1}$, $\al$ is a linear combination with integer coefficients of $\al_1,\ldots,\al_k$.
\item $\alpha_{i}$, $i\in\{2,\ldots,g\}$, is not a linear combination with integer coefficients of $\alpha_1,\ldots,\alpha_{i-1}$.
\end{enumerate}
The rational numbers $\alpha_1,\ldots, \alpha_g$ are called the \em Puiseux exponents \em of $Y$.

\noindent
Given two plane curves $Y,Z$ with Puiseux expansions (\ref{TWOPU}), 
we call \em contact exponent \em of $Y$ and $Z$ to the smallest $\gamma$ such that $a_\ga\not=b_\ga$. 
We will denote it by $o(Y,Z)$.

\noindent
Let $Y,Z$ be two plane curves with Puiseux expansions (\ref{TWOPU}). Given a positive rational $\gamma$, $o(Y,Z)=\gamma$ if and only if 
$y=\sum_{\al \leq \beta} a_\al x^\al$ and $y=\sum_{\al \leq \beta} b_\al x^\al$ define the same curve for all positive rational $\beta<\gamma$ but $y=\sum_{\al\leq \gamma} a_\al x^\al$ and $y=\sum_{\al\leq \gamma} b_\al x^\al$ define different curves.

\noindent
Notice that $o(Y,Z)=1$ if and only if $Y$ and $Z$ have different tangent cones.

\begin{theorem}\label{BLOWINV1}
Let $\widetilde{Y}$ be the strict transform of an irreducible plane curve $Y$.
Let $E$ be the exceptional divisor of the blow up.
Assume that $Y$ has Puiseux exponents $\al_1,\ldots,\al_g$. 
\begin{enumerate}
\item If $\al_1\ge 2$, $\widetilde{Y}$ is transversal to the exceptional divisor and has Puiseux exponents $\al_1-1,\al_2-1,\ldots,\al_g-1$. 
\item Assume that $\al_1<2$. Then $\widetilde{Y}$ is tangent to the exceptional divisor and $o(\widetilde{Y},E) =1/(\al_1-1)$.
\begin{enumerate}
\item If $1/(\al_1-1)$ is not an integer,
the Puiseux exponents of $\widetilde{Y}$ are 

\noindent
$1/(\al_1-1),\al_2/(\al_1-1)-1,\ldots,\al_g/(\al_1-1)-1$.
\item If $1/(\al_1-1)$ is  an integer,
the Puiseuxs exponents of $\widetilde{Y}$ are $\al_2/(\al_1-1)-1,\ldots,\al_g/(\al_1-1)-1$.
\end{enumerate}
\end{enumerate}
\end{theorem}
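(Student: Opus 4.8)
The plan is to compute directly with the Puiseux parametrization. Start from a parametrization $x=t^k$, $y=\sum_{i\ge n}c_i t^i$ of $Y$ as in \eqref{PU1}, where the multiplicity of $Y$ equals $k=\min\{k,n\}$ in case (1) (i.e. $\al_1=n/k\ge 2$ means $k\le n$) and equals $n$ in case (2) (i.e. $\al_1<2$ means $n<k$). The blow up relevant to us is the chart in which the exceptional divisor $E$ is $\{x=0\}$ and the strict transform is obtained by dividing $y$ by the appropriate power of the ambient coordinate; which coordinate we divide by is precisely what distinguishes the two cases. So first I would fix notation for the two charts of the blow up and identify, using the hypothesis on $\al_1$ (equivalently on the tangent cone, via the remark that $o(Y,E)=1$ iff the tangent cones differ), which chart contains $\widetilde Y$.

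For case (1): here $k\le n$, so in the chart $x=x$, $y=x y_1$ the strict transform is parametrized by $x=t^k$, $y_1=\sum_{i\ge n}c_i t^{i-k}$. Since $n-k\ge k$ would be false in general — rather $n>k$ is not assumed, only $n\ge k$ with $(n,k)=1$, so $n>k$ unless $k=1$ — one checks $y_1$ has lowest exponent $n-k\ge 0$ and the parametrization is again of standard Puiseux form with the same $k$. Reading off exponents: $a_\al\ne 0$ for $Y$ at $\al=i/k$ becomes $a'_\al\ne0$ for $\widetilde Y$ at $\al=i/k-1=(i-k)/k$, so every Puiseux exponent drops by $1$, giving $\al_1-1,\dots,\al_g-1$; transversality to $E=\{x=0\}$ is clear since $y_1(0)=c_n\cdot[n=k]$ — more carefully, $\widetilde Y$ meets $E$ at the origin of this chart and its tangent is not $\{x=0\}$ because $x=t^k$ still has order $k$ while $y_1$ has order $n-k<$ or the intersection number argument shows transversality. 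I would phrase transversality via $o(\widetilde Y,E)$: since $E=\{x=0\}$ and $\widetilde Y$ has a genuine $x$-Puiseux series, $o(\widetilde Y,E)\ge 1$ with equality, i.e. transversal, exactly when the new leading exponent is $\ge 1$, which holds as $(n-k)/k\ge 1\iff n\ge 2k\iff \al_1\ge 2$. Good — this is consistent and I would make it the crux of the case-(1) argument.

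For case (2): now $n<k$, the multiplicity is $n$, the tangent cone is $\{y=0\}$, and $\widetilde Y$ lives in the chart $x=x_1 y$, $y=y$. Substituting is awkward directly; the cleaner route is to reparametrize. Set $s=t$ temporarily and note that from $x=t^k$, $y=c_n t^n(1+\cdots)$ we can extract a new uniformizer: because $(n,k)=1$ there are integers making $t$ a power series in suitable roots, but more simply, in the new chart the strict transform is $x_1=x/y=t^k/\sum c_i t^i=t^{k-n}\cdot u(t)$ with $u$ a unit, $y=\sum c_i t^i$. So $\widetilde Y$ is parametrized with the SAME $t$, now as a curve in coordinates $(y,x_1)$ with $y$ of order $n$ in $t$ and $x_1$ of order $k-n$. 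Hence $o(\widetilde Y,E)$, where $E=\{y=0\}$ here: expressing $x_1$ as a Puiseux series in $y$, the leading term is $y^{(k-n)/n}=y^{1/(\al_1-1)}$ since $\al_1-1=(n-k)/k\cdot(-1)$… I must be careful with signs: $\al_1=n/k$, so $1/(\al_1-1)=k/(n-k)$, and indeed $\al_1<2\iff n<2k$, but we also need $n<k$ for this chart, giving $k/(n-k)<0$?? That is wrong, so in fact the right normalization is $\al_1<2$ means $n/k<2$ i.e. $n<2k$, and the multiplicity being $\min(n,k)$; if $\al_1\in(1,2)$ then $k<n<2k$ and multiplicity is $k$ still — so actually case (2) is $k<n<2k$, the curve is still tangent to $\{y=0\}$?? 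No: tangent cone transversal to $\{x=0\}$ was the standing hypothesis, so multiplicity is $k$ always, and "tangent to $E$" in (2) means after blow up. I would recompute: strict transform in chart $y=xy_1$ gives $y_1=\sum c_i t^{i-k}$ with leading exponent $n-k$; if $n-k<k$, i.e. $\al_1<2$, then $y_1$ has order $<k=\mathrm{ord}(x)$, so the tangent cone of $\widetilde Y$ is $\{y_1=0\}=\{y=0\}$, which is... the strict transform of $\{y=0\}$, not $E$ — so "tangent to the exceptional divisor" must instead refer to the chart $x=x_1 y_1$; I would untangle this once and then the contact exponent $o(\widetilde Y,E)=k/(n-k)=1/(\al_1-1)$ follows, and the two subcases of (2) are just whether this rational is an integer, i.e. whether $(n-k)\mid k$, which by $(n,k)=1$ means $(n-k)\mid n$ too; when it is an integer that exponent is "absorbed" and disappears from the Puiseux list, when it is not it becomes the new $\al_1$. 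The remaining exponents transform by $\al_j\mapsto \al_j/(\al_1-1)-1$ by the same bookkeeping as in case (1) applied after a change of uniformizer $t\mapsto t^{1/(n-k)}$-type substitution.

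The main obstacle is the careful case analysis of which affine chart of the blow up the strict transform lands in, and correspondingly tracking the change of local uniformizer: in case (1) the uniformizer $t$ is unchanged and the computation is a one-line substitution, but in case (2) one must pass to a new Puiseux parameter (a root of the old one), and keeping the bookkeeping of "which exponents are integer combinations of earlier ones" — condition (2) and (3) in the definition of Puiseux exponents — correct under the transformation $\al\mapsto\al/(\al_1-1)-1$ is where the subcases (a) and (b) genuinely differ and where sign/normalization errors are easy to make. I would handle this by first proving the formula for the contact exponent $o(\widetilde Y,E)$, then using the established blow-up behaviour of contact and Puiseux data (or a direct re-derivation of the Puiseux series of $\widetilde Y$) to read off the exponents, checking minimality/primitivity conditions by hand at the end.
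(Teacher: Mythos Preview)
The paper does not actually prove this theorem: its entire proof is the sentence ``It follows from Theorem 3.5.5 of \cite{CTCW}.'' Your direct computation with the Puiseux parametrization is the standard route and is essentially the content behind that citation, so there is no meaningful difference in approach to discuss.

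Your case (1) is correct once tidied: with $\al_1=n/k\ge 2$ the strict transform in the chart $(x,y_1)=(x,y/x)$ is $x=t^k$, $y_1=\sum_{i\ge n}c_i t^{i-k}$, still in standard Puiseux form with multiplicity $k$, and every exponent drops by $1$.

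In case (2) you have one genuine slip. With $k<n<2k$ the strict transform still lives in this same chart, parametrized by $x=t^k$, $y_1=c_n t^{n-k}(1+\cdots)$; since now $n-k<k$, the multiplicity of $\widetilde Y$ is $n-k$ and its tangent cone is $\{x=0\}=E$, \emph{not} $\{y_1=0\}$ as you wrote (the variable of \emph{higher} order is the one that vanishes on the tangent). There is no need to pass to the other chart: $\widetilde Y$ is tangent to $E$ right where you are. Once this is fixed, inverting $y_1=c_n t^{n-k}(1+\cdots)$ and substituting into $x=t^k$ gives $x=(\text{unit})\cdot y_1^{k/(n-k)}$, whence $o(\widetilde Y,E)=k/(n-k)=1/(\al_1-1)$, as you eventually state.

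The reparametrization you flag as ``the main obstacle'' is exactly what the paper works out in its proof of Theorem~\ref{BLOWINV2} (case $\mu<2$): set $w=t\al(t)$ with $\al(t)^{n-k}=\ga(t)$, solve $t=w\be(w)$, and read off the new Puiseux expansion. That same computation yields the transformation $\al_j\mapsto\al_j/(\al_1-1)-1$ of the higher exponents and explains the dichotomy (a)/(b): when $k/(n-k)$ is an integer it fails condition (3) of the definition of Puiseux exponents and is dropped. So your plan is sound; the missing piece is just that explicit inversion, and you can lift it from the paper's own argument for the next theorem.
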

\begin{proof}
It follows from Theorem 3.5.5 of \cite{CTCW}.
\end{proof}

\begin{corollary}\label{BLOWDINV1}
Let $X$ be a smooth complex surface.
Let $E$ be an exceptional divisor of $X$.
Let $\pi : X\to\underline{X}$ be the blow down of $X$ along $E$.
Let $Y$ be the germ of a irreducible plane curve at a point $\sigma$ of $E$.
Assume that $Y$ has Puiseux exponents $\al_1,\ldots,\al_g$.
\begin{enumerate}
\item If $Y$ is transversal to $E$, $\pi(Y)$ has Puiseux exponents $\al_1+1,\al_2+1\ldots,\al_g+1$.
\item Assume that $Y$ is tangent to $E$.
\begin{enumerate}
\item If $o(Y,E)$ is not an integer, $o(Y,E)=\al_1$ and $\pi (Y)$ has Puiseux exponents 
$1+1/\al_1,(\al_2+1)/\al_1,\ldots,(\al_g+1)/\al_1$.

\item If $o(Y,E)$ is an integer, $\pi (Y)$ has Puiseux exponents 

$1+1/o(Y,E),(\al_1+1)/o(Y,E),\ldots,(\al_g+1)/o(Y,E)$.
\end{enumerate}
\end{enumerate}
\end{corollary}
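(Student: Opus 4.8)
The plan is to obtain Corollary~\ref{BLOWDINV1} from Theorem~\ref{BLOWINV1} by inverting the blow-up formulas. Put $\underline{Y} := \pi(Y)$. Since $\pi$ restricts to an isomorphism away from $E$ and contracts $E$ to the single point $\pi(\sigma)$, and $Y$ is an irreducible germ at $\sigma\in E$ that is not a component of $E$, the image $\underline{Y}$ is an irreducible plane curve germ at $\pi(\sigma)$ and $Y$ is precisely its strict transform under the blow up $\pi$. Denote by $\beta_1<\cdots<\beta_h$ the (a priori unknown) Puiseux exponents of $\underline{Y}$. Feeding $\underline{Y}$ into Theorem~\ref{BLOWINV1} expresses $\alpha_1,\ldots,\alpha_g$ (the Puiseux exponents of $Y=\widetilde{\underline{Y}}$) in terms of the $\beta_i$, and the corollary amounts to solving these identities for the $\beta_i$.

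First I would dispose of part~(1). In part~(2) of Theorem~\ref{BLOWINV1} the strict transform is always tangent to $E$, so if $Y$ is transversal to $E$ then $\underline{Y}$ must fall under part~(1) of that theorem; hence $\beta_1\ge 2$, $h=g$, and $\alpha_i=\beta_i-1$ for every $i$. Inverting gives $\beta_i=\alpha_i+1$, which is the claimed list (and indeed $\beta_1=\alpha_1+1\ge 2$, consistent with $\underline{Y}$ lying in part~(1)).

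For part~(2), $Y$ is tangent to $E$, which by Theorem~\ref{BLOWINV1} forces $\beta_1<2$ and $o(Y,E)=o(\widetilde{\underline{Y}},E)=1/(\beta_1-1)$, so $\beta_1=1+1/o(Y,E)$. Since $o(Y,E)=1/(\beta_1-1)$, the statement ``$o(Y,E)$ is an integer'' is equivalent to ``$1/(\beta_1-1)$ is an integer'', i.e.\ the split (2a)/(2b) of the corollary matches the split (2a)/(2b) of Theorem~\ref{BLOWINV1}. In subcase~(2a): $h=g$, $\alpha_1=1/(\beta_1-1)$ (so $o(Y,E)=\alpha_1$), and $\alpha_i=\beta_i/(\beta_1-1)-1=\alpha_1\beta_i-1$ for $i\ge 2$; solving gives $\beta_1=1+1/\alpha_1$ and $\beta_i=(\alpha_i+1)/\alpha_1$. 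In subcase~(2b): $h=g+1$ and $\alpha_i=\beta_{i+1}/(\beta_1-1)-1=o(Y,E)\,\beta_{i+1}-1$ for $1\le i\le g$; solving gives $\beta_{i+1}=(\alpha_i+1)/o(Y,E)$, together with $\beta_1=1+1/o(Y,E)$. In every case the list of $\beta_i$ produced is exactly the list the corollary assigns to $\pi(Y)$.

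The only points requiring care are of a bookkeeping nature: that $\widetilde{\pi(Y)}=Y$, so that Theorem~\ref{BLOWINV1} may be applied with $\underline{Y}$ playing the role of ``$Y$''; and that the transversal/tangent alternative for $Y$ and the integral/non-integral alternative for $o(Y,E)$ correspond to the case division of Theorem~\ref{BLOWINV1} as used above. I expect neither to be a real obstacle: the first is the standard compatibility of the strict transform with blow up and blow down, and the second is immediate from the statement of Theorem~\ref{BLOWINV1} once one notes that the Puiseux exponents of a branch, and its contact exponent with a smooth curve through its center, do not depend on the choice of admissible local coordinates. Granting these, each case is settled by the one-line inversion indicated above.
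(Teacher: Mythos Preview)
Your proposal is correct and is exactly the intended argument: the paper states Corollary~\ref{BLOWDINV1} without proof, as an immediate consequence of Theorem~\ref{BLOWINV1}, and your inversion of the three cases---together with the observation that $Y$ is the strict transform of $\pi(Y)$ and that the transversal/tangent and integral/non-integral dichotomies match---is precisely what makes it a corollary.
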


\begin{theorem}\label{BLOWINV2}
Let $Y,Z$ be irreducible plane curves with the same tangent cone.
Let $\widetilde{Y},\widetilde{Z}$ be their strict transforms and $E$ the exceptional divisor.
Let $\mu$ $[\nu,\widetilde{\mu},\widetilde{\nu}]$ be the first Puiseux exponent of $Y$ $[Z,\widetilde{Y},\widetilde{Z}]$.
Assume that $\nu\le \mu$.
\begin{enumerate}

\item If $\nu\ge 2$, $o(\widetilde{Y},\widetilde{Z})=o({Y},{Z})-1$ and $\widetilde{Y},\widetilde{Z}$ are transversal to $E$. 

\item If $\nu<2$ and $\mu>2$, $o(\widetilde{Y},\widetilde{Z}) =1$ and $\widetilde{Z}$ is tangent to $E$.

\item Assume that $\mu<2$. 
Then $\widetilde{Y},\widetilde{Z}$ are tangent to $E$.
Moreover, $\widetilde{\mu}\le\widetilde{\nu}$.

\begin{enumerate}

\item Assume that $o(Y,Z)>\mu$. Then $\mu=\nu$, $\widetilde{\mu}=\widetilde{\nu}$ and 

$o(\widetilde{Y},E)=o(\widetilde{Z},E)=\widetilde{\mu}<o({Y},{Z})/(\mu-1)-1=
o(\widetilde{Y},\widetilde{Z})$.

\item Assume that $o(Y,Z)=\mu$. Then $\mu=\nu$ and 

$o(\widetilde{Y},E)=o(\widetilde{Z},E)=\widetilde{\mu}=\widetilde{\nu}=
o(\widetilde{Y},\widetilde{Z})$.

\item Assume that $o(Y,Z)<\mu$. Then $o(Y,Z)=\nu$  and

$o(\widetilde{Y},E)=o(\widetilde{Y},\widetilde{Z})=\widetilde{\mu}<\widetilde{\nu}=o(\widetilde{Z},E)$.

\end{enumerate}
\end{enumerate}
\end{theorem}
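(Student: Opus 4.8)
The statement is local, and everything will follow by manipulating Puiseux expansions in the single chart of the blow-up in which the exceptional divisor $E$ is visible; the plan is to fix that chart, normalise coordinates, and then dispose of the three cases by direct computation, calling on Theorem~\ref{BLOWINV1} for the position of each individual strict transform. After the linear change $y\mapsto y-\la x$ taking the common tangent line to $\{y=0\}$, we may assume $1<\nu\le\mu$; write $Y:y=\sum_\al a_\al x^\al$, $Z:y=\sum_\al b_\al x^\al$, put $\ga=o(Y,Z)$ (so $a_\al=b_\al$ for $\al<\ga$, $a_\ga\neq b_\ga$, and $\ga>1$ because the tangent cones coincide), and recall that $a_\al=0$ for $1<\al<\mu$ while $a_\mu\neq0$, and likewise for $Z$ and $\nu$. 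In the chart $x=x_1$, $y=x_1y_1$ with $E=\{x_1=0\}$, the strict transforms $\widetilde Y$, $\widetilde Z$ are parametrised by $y_1=\sum_\al a_\al x_1^{\al-1}$ and $y_1=\sum_\al b_\al x_1^{\al-1}$, and both pass through the origin $\si$ of the chart since $\mu,\nu>1$. The three cases of the theorem are exactly the three positions of $1$ and $2$ relative to $\nu\le\mu$.

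In Case~1 ($\nu\ge2$), every shifted exponent $\al-1$ that actually occurs is $\ge1$, so the two displayed series are honest Puiseux expansions of $\widetilde Y$, $\widetilde Z$ in the coordinates $(x_1,y_1)$, and $\widetilde Y$, $\widetilde Z$ are transversal to $E$ by Theorem~\ref{BLOWINV1}(1). Truncations of $Y$ and of $Z$ correspond, under the blow-up, to truncations of $\widetilde Y$ and of $\widetilde Z$, so the two expansions stay aligned in the sense of Section~\ref{INVSEC} and $o(\widetilde Y,\widetilde Z)=\ga-1=o(Y,Z)-1$; note $\ga\ge\nu\ge2$, for $\ga<\nu$ would force $a_\ga=b_\ga=0$. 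In Case~2 ($\nu<2<\mu$) one has $1<\nu<2<\mu$; the leading term $b_\nu x_1^{\nu-1}$ of $\widetilde Z$ has exponent in $(0,1)$, so $\widetilde Z$ is tangent to $E$ with $o(\widetilde Z,E)=1/(\nu-1)$ (Theorem~\ref{BLOWINV1}(2)), whereas $\widetilde Y$ has leading exponent $\mu-1>1$ and tangent line $\{y_1=0\}\neq E$, hence is transversal to $E$; since $\widetilde Y$ and $\widetilde Z$ then have distinct tangent lines at $\si$, $o(\widetilde Y,\widetilde Z)=1$.

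In Case~3 ($\mu<2$, i.e.\ $1<\nu\le\mu<2$), both $\widetilde Y$, $\widetilde Z$ have leading exponent in $(0,1)$ and are tangent to $E$ (Theorem~\ref{BLOWINV1}(2)); inverting, they are given by convergent Puiseux series $x_1=\phi_Y(y_1)$ and $x_1=\phi_Z(y_1)$, transversal to $\{y_1=0\}$, whose leading terms are nonzero multiples of $y_1^{1/(\mu-1)}$, resp.\ $y_1^{1/(\nu-1)}$, governed by $a_\mu$, resp.\ $b_\nu$. Comparing with $E:x_1=0$ yields $o(\widetilde Y,E)=1/(\mu-1)$ and $o(\widetilde Z,E)=1/(\nu-1)$, which equal $\widetilde\mu$ and $\widetilde\nu$ by Theorem~\ref{BLOWINV1}(2); as $\nu\le\mu$, $\widetilde\mu\le\widetilde\nu$. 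The link between $\ga$ and $(\nu,\mu)$ comes from the vanishing pattern of the coefficients: if $\ga>\nu$ then $a_\nu=b_\nu\neq0$, forcing $\mu=\nu$; if $\ga\le\nu$ then $\ga=\nu$ (otherwise $a_\ga=b_\ga=0$), with $\mu>\nu=\ga$ or $\mu=\nu=\ga$. Therefore $\ga>\mu\Rightarrow\mu=\nu$ (sub-case (a)), $\ga=\mu\Rightarrow\mu=\nu=\ga$ (sub-case (b)), $\ga<\mu\Rightarrow\ga=\nu<\mu$ (sub-case (c)). In (c), $1/(\mu-1)<1/(\nu-1)$, so $\phi_Y$ and $\phi_Z$ already disagree at the leading term of $\phi_Y$, whence $o(\widetilde Y,\widetilde Z)=1/(\mu-1)=\widetilde\mu<\widetilde\nu$; in (b) the leading exponents coincide but $a_\mu\neq b_\mu$ makes the leading terms differ, so $o(\widetilde Y,\widetilde Z)=1/(\mu-1)=\widetilde\mu=\widetilde\nu$.

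Sub-case 3(a) is the heart of the matter. Here $a_\al=b_\al$ for $\al<\ga$, so the parametrisations of $\widetilde Y$ and $\widetilde Z$ agree up to the term $(a_\ga-b_\ga)x_1^{\ga-1}$ and share the leading term $a_\mu x_1^{\mu-1}$. Putting $\phi=\phi_Z$ and $g(y_1)=\sum_\al a_\al\phi(y_1)^{\al-1}$, and using that $\phi$ inverts $x_1\mapsto\sum_\al b_\al x_1^{\al-1}$, one finds $g(y_1)=y_1+(a_\ga-b_\ga)\phi(y_1)^{\ga-1}+(\mbox{higher order})=y_1+(a_\ga-b_\ga)c^{\ga-1}y_1^{(\ga-1)/(\mu-1)}+\cdots$, where $c\neq0$ is the leading coefficient of $\phi$ and $(\ga-1)/(\mu-1)>1$ because $\ga>\mu$. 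Since $\phi_Y=\phi\circ g^{-1}$, a Newton-polygon estimate gives $\phi_Y(y_1)=\phi(y_1)-\kappa y_1^{\ga/(\mu-1)-1}+\cdots$ with $\kappa$ a nonzero multiple of $a_\ga-b_\ga$, the exponent being $\frac{1}{\mu-1}+\frac{\ga-1}{\mu-1}-1$; hence $o(\widetilde Y,\widetilde Z)=\ga/(\mu-1)-1=o(Y,Z)/(\mu-1)-1$, which is $>1/(\mu-1)=\widetilde\mu=\widetilde\nu$ exactly because $\ga>\mu$. The one genuinely delicate step, on which the whole theorem rests, is this last estimate --- that perturbing the equation inverting a Puiseux series by a term of order $p>1$ perturbs the inverse series by the exact order $\frac{1}{\mu-1}+p-1$, with no accidental cancellation; I expect this Newton-polygon bookkeeping to be the main obstacle. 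Alternatively one could avoid it by combining the blow-up formula $(\widetilde Y\cdot\widetilde Z)=(Y\cdot Z)-m(Y)m(Z)$ for intersection multiplicities with the standard expression of $(Y\cdot Z)$ through $o(Y,Z)$ and the Puiseux exponents of $Y$ lying below $o(Y,Z)$.
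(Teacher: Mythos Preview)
Your argument is correct and follows the same overall strategy as the paper: work in the blow-up chart $x=x_1$, $y=x_1y_1$, invoke Theorem~\ref{BLOWINV1} for the position of each strict transform relative to $E$, and in case~(3) pass to the inverse parametrisation $x_1=\phi(y_1)$. Where you differ is in the key step for (3)(a). The paper does not compose Puiseux series directly; instead it works with an honest parametrisation $x=t^k$, writes $y/x=(t\al(t))^{n-k}$ with $\al(0)\neq0$, inverts $w=t\al(t)$ to $t=w\be(w)$, and shows via the explicit recursions (\ref{BETA1})--(\ref{BETAK}) that $\be_i$ depends only on $\al_0,\ldots,\al_i$; since the map $\al\mapsto\be$ is invertible, first disagreement in $\al$ occurs at exactly the same index as first disagreement in $\be$, and tracing the indices back through $c\mapsto\ga\mapsto\al$ yields the stated formula. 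Your perturbation computation $\phi_Y=\phi_Z\circ g^{-1}$ is more direct and exhibits the exponent $\ga/(\mu-1)-1$ immediately, but at the price of composing Puiseux series with fractional exponents of one another, which requires consistent choices of roots; the paper's parametrised approach sidesteps this by staying inside the power-series rings $\mathbb C\{t\}$ and $\mathbb C\{w\}$. Your suggested alternative via intersection multiplicities is not used in the paper.
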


\begin{proof}
Statements $(1)$ and $(2)$ of Theorem \ref{BLOWINV1} imply statement $(2)$. Let (\ref{PU1}) be a parametrization of $Y$. Therefore the curve $\widetilde{Y}$ has a parametrization given by
\begin{equation}\label{PUBLOW}
x=t^k,    \qquad    \frac{y}{x}=\sum_{i\ge n}c_it^{i-k}=t^{n-k}\gamma(t),\ \gamma(0)\neq 0.  
\end{equation}
Assume $\nu\geq 2$. Then the parametrization (\ref{PUBLOW}) is a Puiseux expansion and statement $(1)$ holds. 
Assume $\mu<2$. In this case, the parametrization (\ref{PUBLOW}) is not a Puiseux expansion. Let $x=t^k,\ y=t^{n}\overline{\gamma}(t),\overline{\gamma}(0)\neq 0$ be a Puiseux expansion of $Z$. Let $\alpha(t)=\sum_{i\geq 0}\alpha_i t^{i},\overline{\alpha}(t)=\sum_{i\geq 0}\overline{\alpha}_i t^{i}$ such that $\alpha(t)^{n-k}=\gamma(t),\overline{\alpha}(t)^{n-k}=\overline{\gamma}(t)$. Let $\beta(t)=\sum_{i\geq 0}\beta_i t^{i},\overline{\beta}(t)=\sum_{i\geq 0}\overline{\beta}_i t^{i}$, such that $t=w\beta(w)$ is a solution of $w=t\alpha(t)$ and $t=w\overline{\beta}(w)$ is a solution of $w=t\overline{\alpha}(t)$. To prove statement $(3)$ it suffices to show that if $\alpha(t)\equiv\overline{\alpha}(t)$ \em mod \em $(t)^{i}$, $i\in\mathbb N$, $\beta(t)\equiv\overline{\beta}(t)$ \em mod \em $(t)^{i}$. Also, if $\alpha_{i+1}\neq\overline{\alpha}_{i+1}$, $\beta_{i+1}\neq\overline{\beta}_{i+1}$.

\noindent We can check easily that $\alpha_i$ depends only on $c_n,\ldots,c_{n+i}$, for all $i\geq 0$, and $\alpha(0)\neq 0$. From the equality $w=t\alpha(t)$ we get
\begin{equation}\label{REPAR1} 
w=\sum_{i\geq 0}\sum_{j_0,\ldots,j_i}\alpha_i\beta_{j_{0}}\ldots\beta_{j_{i}}w^{j_{0}+\cdots+j_{i}+i+1}.
\end{equation}
From equality (\ref{REPAR1}) we deduce the equations
\begin{equation}\label{BETA1}
\alpha_{0}\beta_{0}=1,
\end{equation}
\begin{equation}\label{BETAK}
\alpha_{0}\beta_{k-1}+\sum_{i=1}^{k-1}\sum_{j_0+\cdots+j_i=k-1-i}\alpha_i\beta_{j_{0}}\ldots\beta_{j_{i}}=0,\ k\in\mathbb N.
\end{equation}
Therefore $\beta(0)\neq 0$ and the parametrization 
\begin{displaymath}
x=(w\beta(w))^{\frac{k}{(k,n-k)}},\;\textstyle{\frac{y}{x}}=w^{\frac{n-k}{(k,n-k)}}
\end{displaymath}
is a Puiseux expansion.
Also, by induction, we prove that $\beta_i$ depends only on $\alpha_0,\ldots,\alpha_i$, for all $i\geq 0$.
\end{proof}

\begin{corollary}\label{BLOWDINV2}
Let $X$ be a smooth complex surface.
Let $E$ be an exceptional divisor of $X$.
Let $\pi : X\to\underline{X}$ be the blow down of $X$ along $E$.
Let $Y,Z$ be two germs of  irreducible plane curves at a point $\sigma$ of $E$.
The curves $\pi(Y),\pi(Z)$ are germs of irreducible plane curves at the point $\sigma(E)$ of $\underline{X}$,
with the same tangent cone.
Let $\mu$ $[\nu]$ be the first Puiseux exponent of $Y$ $[Z]$.
Assume that $\mu\le\nu$.
\begin{enumerate}
\item If $Y,Z$ are transversal to $E$, $o(\pi(Y),\pi(Z))=o(Y,Z)+1$.
\item If $Y$ is transversal to $E$ and $Z$ is tangent to $E$, $o(\pi(Y),\pi(Z))=1+1/\nu$.
\item Assume that $Y$ and $Z$ are tangent to $E$.
\begin{enumerate}
\item If $o(Y,E)=o(Z,E)<o(Y,Z)$, $o(\pi(Y),\pi(Z))=(\mu-1)(o(Y,Z)+1)$.
\item If $o(Y,E)=o(Z,E)=o(Y,Z)$, $o(\pi(Y),\pi(Z))=1+1/o(Y,Z)$.
\item If $o(Y,E)=o(Y,Z)<o(Z,E)$, $o(\pi(Y),\pi(Z))=1+1/o(Y,Z)$.
\end{enumerate}
\end{enumerate}
\end{corollary}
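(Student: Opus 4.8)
The plan is to deduce this from Theorem \ref{BLOWINV2} by running the blow up backwards, exactly as Corollary \ref{BLOWDINV1} was obtained from Theorem \ref{BLOWINV1}. Write $\underline Y=\pi(Y)$ and $\underline Z=\pi(Z)$. Since $Y$ and $Z$ meet $E$ at $\sigma$ and $\pi$ contracts $E$ to $\sigma(E)$, the surface $X$ is the blow up of $\underline X$ at $\sigma(E)$, $E$ is its exceptional divisor, and $Y,Z$ are the strict transforms $\widetilde{\underline Y},\widetilde{\underline Z}$. The tangent cones of $\underline Y$ and $\underline Z$ are both the line determined by $\sigma$, hence equal, so Theorem \ref{BLOWINV2} applies to the pair $\underline Y,\underline Z$. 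Read with ``$Y$'' $=\underline Y$, ``$Z$'' $=\underline Z$, ``$\widetilde Y$'' $=Y$, ``$\widetilde Z$'' $=Z$, that theorem expresses $o(Y,Z)$ and the first Puiseux exponents of $Y,Z$ in terms of $o(\underline Y,\underline Z)$ and the first Puiseux exponents of $\underline Y,\underline Z$; solving these relations for $o(\underline Y,\underline Z)=o(\pi(Y),\pi(Z))$ and re-expressing the data of $\underline Y,\underline Z$ via Corollary \ref{BLOWDINV1} yields the formulas.

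Corollary \ref{BLOWDINV1} tells us that the blow down of a curve transversal to $E$ has first Puiseux exponent increased by $1$, and the blow down of a curve tangent to $E$ has first Puiseux exponent $1+1/o(\cdot,E)\in(1,2)$; and part (1) of Theorem \ref{BLOWINV1} says a strict transform is transversal to $E$ precisely when the curve below it has first Puiseux exponent $\geq 2$. Hence: in case (1), both $\underline Y,\underline Z$ have first Puiseux exponent $\geq 2$, so Theorem \ref{BLOWINV2}(1) gives $o(Y,Z)=o(\underline Y,\underline Z)-1$, which is the claim. In case (2), $\underline Y$ has first Puiseux exponent $\geq 2$ and $\underline Z$ has first Puiseux exponent $<2$, so Theorem \ref{BLOWINV2}(2) applies; it gives $o(Y,Z)=1$ with $\underline Z$ tangent to $E$, and since the Puiseux expansion of $\underline Y$ has no term of exponent $<2$ while that of $\underline Z$ begins exactly at its first Puiseux exponent, $o(\underline Y,\underline Z)$ is that first Puiseux exponent, which Corollary \ref{BLOWDINV1} identifies with $1+1/\nu$.

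In case (3), $Y$ and $Z$ are both tangent to $E$, so $\underline Y$ and $\underline Z$ both have first Puiseux exponent $<2$ and we are in case (3) of Theorem \ref{BLOWINV2}. Since $o(\widetilde{\underline Y},E)=1/(\alpha_1-1)$ for a curve with first Puiseux exponent $\alpha_1<2$, among $\underline Y,\underline Z$ the one with the larger first Puiseux exponent has the strict transform with smaller contact with $E$, which fixes the correspondence with the ordering hypothesis of that theorem. One then checks that the three subcases of Theorem \ref{BLOWINV2}(3), distinguished by comparing $o(\underline Y,\underline Z)$ with the larger of the two first Puiseux exponents downstairs, correspond, after translation through Corollary \ref{BLOWDINV1}, to the three cases $o(Y,E)=o(Z,E)<o(Y,Z)$, $o(Y,E)=o(Z,E)=o(Y,Z)$ and $o(Y,E)=o(Y,Z)<o(Z,E)$; in each one the relation furnished by Theorem \ref{BLOWINV2}(3), together with the identity $1+1/o(\cdot,E)$ for the first Puiseux exponent of a blow down (Corollary \ref{BLOWDINV1}), rearranges to the asserted value of $o(\pi(Y),\pi(Z))$.

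The step I expect to be the main obstacle is the bookkeeping in case (3): one must keep straight which of $\underline Y,\underline Z$ plays which role in Theorem \ref{BLOWINV2} and faithfully translate its internal subcase hypotheses (comparisons of $o(\underline Y,\underline Z)$ with first Puiseux exponents downstairs) into those of the corollary (comparisons of $o(Y,E)$, $o(Z,E)$ and $o(Y,Z)$ upstairs). Care is most needed at the boundary between transversality and tangency, where the first Puiseux exponent equals $2$, and in the integer/non-integer alternative of part (2) of Corollary \ref{BLOWDINV1}, since in the integer case the contact order $o(\cdot,E)$ and the first Puiseux exponent of the strict transform need not coincide.
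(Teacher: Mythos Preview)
Your approach is exactly what the paper intends: the corollary is stated immediately after Theorem~\ref{BLOWINV2} with no proof, in the same way Corollary~\ref{BLOWDINV1} follows Theorem~\ref{BLOWINV1}, so the implicit argument is precisely the inversion you describe. Your identification of the role-swap in case~(3) (the curve with larger first Puiseux exponent downstairs acquires smaller contact with $E$ upstairs) and of the integer/non-integer boundary in Corollary~\ref{BLOWDINV1}(2) as the places requiring care is accurate; one small point to watch is that in case~(2) the formula $1+1/\nu$ really comes via $1+1/o(Z,E)$, and identifying $o(Z,E)$ with the first Puiseux exponent $\nu$ of $Z$ uses the non-integer clause of Corollary~\ref{BLOWDINV1}(2), so you should keep that distinction explicit when you write out the details.
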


\section{The Combinatorics of the Resolution Algorithm}\label{COMBSEC} 

\begin{blank}\label{PARAGRAFO}\em
Let $Y$ be the germ at the origin of a plane curve defined by $f\in\mathbb C\{x,y\}$.
Let $X$ be a polydisc where $f$ converges.
We can assume that $Y$ has no singular points on $X\setminus \{(0,0)\}$.
Set $X_0=X$, $Y_0=Y$, $\si_0=(0,0)$.
Let $\pi_1:X_1\to X_0$ be the blow up of $X$ with center $\si_0$.
Set $D_1=\pi_{1}^{-1}(\si_0)$.
Let $Y_1$ be the strict transform of $Y$.

\noindent
Let us define recursively a sequence of triples $(X_\ell,D_\ell,Y^{(\ell)})$, where 
$X_\ell$ is a smooth complex surface, $D_\ell$ is a divisor with normal crossings of $X_\ell$ 
and $Y^{(\ell)}$ is a union of germs of plane curves at points of $D_\ell$.
Let $W_\ell$ be set of points of $D_\ell\cap Y_\ell$ where $D_\ell\cup Y^{(\ell)}$ is not a divisor with normal crossings.
Let $\pi_{\ell+1}:X_{\ell+1}\to X_\ell$ be the blow up of $X_\ell$ with center $C_\ell$.
Let $Y^{(\ell+1)}$ be the strict transform of $Y^{(\ell)}$ by $\pi_{\ell+1}$.
Set $W_{\ell+1}=\pi^{-1}(D_\ell)$.
After a finite number of steps $D_\ell\cup Y^{(\ell)}$ is a normal crossings divisor, $W_\ell$ is the empty set and $X_{\ell+1}=X_\ell$.
\end{blank}

\noindent
Let us describe this procedure in purely combinatorial terms.

\begin{definition}\label{STEPDEF}
Consider the following data: 
\begin{enumerate}
\item A weighted tree $\mathcal D$. The vertices of $\mathcal D$ are called \em divisors. \em 
The weight $\om_E$  of a divisor $E$ of $\mathcal D$ is a negative integer. 
We call $\om_E$ the \em  self-intersection \em number of $E$.
If two divisors are connected by an edge, we say that they \em intersect each other. \em 
\item A nonempty set $o_{\mathcal D}$. The elements of $o_{\mathcal D}$ are called \em points \em of $\mathcal D$.
If $\mathcal D$ is the empty graph, $o_{\mathcal D}$ has only one element. 
We associate to each point $\sigma$ of $\mathcal D$ a set of divisors $\mathcal D_\sigma$ of $\mathcal D$.
If $E\in\mathcal D_\sigma$, we say that $\sigma$ \em belongs \em to $E$ and $E$ \em contains \em $\sigma$.

\item A family of disjoint finite sets $\mathcal C_{\sigma}$, where $\sigma$ is a point of $\mathcal D$. 
The elements of  $\mathcal C_{\sigma}$ are called \em branches. \em 
We associate to each branch $i$ of $\mathcal C_{\sigma}$ a nonnegative integer $g_i$ and rational numbers $\ep_{i,j}$, $1\le j\le g_i$.
We call the rational numbers $\ep_{i,j}$ the \em Puiseux exponents \em of the branch $i$.
Let $\widetilde{\mathcal C}_\sigma$ be the union of $\mathcal C_{\sigma}$ with the set of divisors of $\mathcal D$ that contain $\sigma$.
We define a map $o:\{(i,j):i,j\in \widetilde{\mathcal C}_\sigma,\: i\neq j\}\rightarrow\mathbb Q$. 
We call $o(i,j)$ the \em contact order \em of $i$ and $j$.
\end{enumerate}
We call the set of data (1),(2),(3) a \em resolution step \em if it verifies the conditions: 
\begin{enumerate}
\item A point of $\mathcal D$ belongs to at most two divisors. Given two divisors, there is at most one point that belongs to both.
\item If a point of $\mathcal D$ belongs to two divisors, they intersect each other.
\item If the graph $\mathcal D$ is nonempty, each point of $\mathcal D$ belongs to some divisor.
\item $o(i,j)\ge 1$ for each $i,j\in\widetilde{\mathcal C}_\sigma$.
\item $o(i,j)=o(j,i)$ for each $i,j\in \widetilde{\mathcal C}_\sigma$.
\item $o(i,k)\ge$min$\{o(i,j),o(j,k)\}$ for each $i,j,k\in\widetilde{\mathcal C}_\sigma$.
\item $1=\ep_{i,0}<\ep_{i,1}<\cdots<\ep_{i,g_i}$ for each $i\in \mathcal C_\sigma$.
\item $\ep_{i,j}$ is not a linear combination with integer coefficients of $\ep_{i,1},\ldots,\ep_{i,j-1}$ for $1\le j\le g_i$.
\end{enumerate}
We say that a resolution step is a \em plane curve \em if the graph $\mathcal D$ is the empty graph. 
We say that a resolution step has \em normal crossings \em at a point $\sigma$ of $\mathcal D$ if
the point $\sigma$ belongs to exactly one divisor $E$, 
the point $\sigma$ has exactly one branch $Y$and $o(Y,E)=1$.
We say that a resolution step is a \em normal crossings graph \em if it is normal crossings at each point $\sigma$ of $o_{\mathcal D}$.
We will represent a normal crossings graph in the following way. We will connect by an edge the asterisc that represents a branch of $Y$ to the vertex that represents a divisor if there is a point that belongs to both.

\noindent
Let $\sigma$ be a point of $\mathcal D$.
Consider in $\mathcal C_\sigma$ the equivalence relation $i \sim j$ if $o(i,j)>1$.
We call an equivalence class $\ell$ of  $\mathcal C_\sigma$ a \em tangent line \em of $\sigma$.
\end{definition}

\noindent
Let $X$ be a smooth complex surface. 
Let $D$ be a normal crossings divisor of $X$.
We will call \em  divisors \em of $D$ to the irreducible components of $D$.
Let $Y$ be the union of a finite set of germs of plane curves of $X$ at points of $D$.
If $D$ is not empty, $Y=\cup_{\si\in A}Y_\si$, where $A$ is a finite subset of $D$
 and $Y_\si$ is the germ at $\sigma$ of a plane curve with no irreducible component contained in $D$.
If $D$ is empty, $Y$ is the germ of a plane curve at a point $\si_0$ of $X$.

\noindent
Let us associate to a triple $(X,D,Y)$ a resolution step denoted by $[X,D,Y]$.
Let $\mathcal D$ be the graph with vertices the divisors of $D$.
Two vertices of $\mathcal D$ are connected if they intersect each other.
We will label each divisor $E$ of $D$ with its self-intersection number $\om_E$ (see \cite{SELFINTERS}).
If $D$ is not empty, set $o_{\mathcal D}=A$.
Otherwise set $o_{\mathcal D}=\{ \si_0 \}$.
Let $\mathcal D_\si$ be the set of divisors of $D$ that contain $\si$, for each $\si\in o_{\mathcal D}$.
Let $Y_i$, $i\in\mathcal C_\si$, be the irreducible components of $Y_\si$.
Let $\ep_{ij}$, $1\le j\le g_i$, be the Puiseux exponents of $Y_i$ for each $i\in C_\si$. 
If $i,j\in\mathcal C_\si$ and $i\not=j$, set $o(i,j)=o(Y_i,Y_j)$.
If $i\in\mathcal C_\si$ and $E\in \mathcal D_\si$, set $o(i,E)=o(Y_i,E)$.
If $E,F\in \mathcal D_\si$ and $E\not=F$, set $o(E,F)=1$.

\noindent
Notice that $[X,D,Y]$ is normal crossings if and only if $D\cup Y$ is a normal crossings divisor.
Moreover, $[X,D,Y]$ is a \em first step \em if and only if $D$ is empty.

\begin{definition}\label{BU}
Let us define the \em blow up \em $(\overline{\mathcal D},(\overline{\mathcal C}_\si))$ of a resolution step $(\mathcal D,(\mathcal C_\si))$.
Let $c_{\mathcal D}$ be the set of $\si\in o_{\mathcal D}$ such that 
$(\mathcal D,(\mathcal C_\si))$ is not normal crossings at $\si$.
The vertices of $\overline{\mathcal D}$ will be the vertices of $\mathcal D$ and the elements of $c_{\mathcal D}$.
For each divisor $E$ of $\overline{\mathcal D}\setminus c_{\mathcal D}$ we set $\overline{\om}_E=\om_E-k$, where $k$ is the number of elements of $c_{\mathcal D}$ that belong to $E$.
We set $\overline{\om}_E=-1$ for each element $E$ of $c_{\mathcal D}$.
Let $\si\in c_{\mathcal D}$.
If $\si$ belongs to a divisor $E$, we connect $\si$ to $E$ by an edge.
If $\si$ belongs to a pair of divisors $E,F$, we withdraw the edge that connects $E$ to $F$ and set
\[
o_{\overline{\mathcal D}}=(o_{\mathcal D}\setminus c_{\mathcal D})\cup\cup_{\si\in c_{\mathcal D}}\{\mbox{ tangent lines of }\si\}.
\]
We set $\overline{\mathcal C}_\si=\mathcal C_\si$ for each $\si\in o_{\mathcal D}\setminus c_{\mathcal D}.$
We set $\overline{\mathcal C}_\ell=\ell$ for each tangent line $\ell$ of $\si\in c_{\mathcal D}$.
We define the Puiseux exponents and the contact exponents of $(\overline{\mathcal D},(\overline{\mathcal C}))$ according to Theorems \ref{BLOWINV1} and \ref{BLOWINV2}.
\end{definition}

\begin{definition}\label{BD} 
Let us define the \em blow down \em $(\underline{\mathcal D},(\underline{\mathcal C}_\si))$ 
of the resolution step $(\mathcal D,(\mathcal C_\si))$.
The vertices of $\underline{\mathcal D}$ are the vertices $E$ of $\mathcal D$ such that $\om_E\not=-1$.
For each vertex $E$ of $\underline{\mathcal D}$ we set $\underline{\om}_E=\om_E+k$, where $k$ is the number of divisors $F\in {\mathcal D}\setminus\underline{\mathcal D} $ such that $F$ intersects $E$.
If two divisors $E,F$ of $\underline{\mathcal D}$ intersect a divisor of weight $-1$ of $\mathcal D$, we connect $E$ and $F$ by an edge. 
Let $o^*_{\mathcal D}$ be the set of points $\si\in o_{\mathcal D}$ such that $\si$ does not belong to a divisor of weight $-1$.
We set $o_{\underline{\mathcal D}}=o^*_{\mathcal D}\cup ({\mathcal D}\setminus\underline{\mathcal D})$.
If $\si\in o^*_{\mathcal D}$, set $\underline{\mathcal C}_\si={\mathcal C}_\si$. 
If $E\in {\mathcal D}\setminus\underline{\mathcal D}$, i.e. $\omega_{E}=-1$, set $\underline{\mathcal C}_E=\cup\{{\mathcal C}_\si$: $\si$ belongs to $E\}$.
We define the Puiseux exponents and the contact exponents of $(\underline{\mathcal D},(\underline{\mathcal C}_\si))$ 
according to Corollaries \ref{BLOWDINV1} and \ref{BLOWDINV2}. 

\end{definition}

\noindent
The following result is an  immediate consequence of the previous definitions.

\begin{lemma}\label{UPDOWNLEMMMA}
The blow up of the resolution step of $(X,D,Y)$ equals the resolution step of the blow up of $(X,D,Y)$.
The blow down of the resolution step of $(X,D,Y)$ equals the resolution step of the blow down of $(X,D,Y)$.
\end{lemma}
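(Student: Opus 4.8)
The plan is to prove both statements by unwinding Definitions \ref{STEPDEF}, \ref{BU} and \ref{BD} and comparing, piece by piece, the resolution step attached to the geometric transform of $(X,D,Y)$ with the combinatorial transform of the resolution step $[X,D,Y]$. Since the Puiseux exponents and the contact exponents of the combinatorial blow up (resp. blow down) are \emph{defined} through Theorems \ref{BLOWINV1}, \ref{BLOWINV2} (resp. Corollaries \ref{BLOWDINV1}, \ref{BLOWDINV2}), once the underlying weighted graph, the set of points and the sets of branches are shown to match, the numerical data match automatically, and the statement is ``immediate'' in the sense claimed.

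For the blow up, I would first check that the set $c_{\mathcal D}$ of points of $[X,D,Y]$ at which the step is not normal crossings coincides with the set $W$ of points of $D\cap Y$ at which $D\cup Y$ is not a normal crossings divisor, including the degenerate case in which $D$ is empty, where both reduce to $\{\sigma_0\}$. This holds because, by Definition \ref{STEPDEF}, the step is normal crossings at $\sigma$ exactly when $\sigma$ lies on one divisor $E$, carries one branch $Y_\sigma$ and $o(Y_\sigma,E)=1$, i.e. $Y_\sigma$ is smooth and transversal to $E$ (recall $o=1$ means distinct tangent cones). Hence the center $c_{\mathcal D}$ used in Definition \ref{BU} is exactly the center of the geometric blow up described in \ref{PARAGRAFO}. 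Next I would match the weighted graph: the divisors of $D'$ are the strict transforms of the divisors of $D$ together with the exceptional divisor $E_\sigma$ of the blow up at each $\sigma\in c_{\mathcal D}$, which are the vertices of $\overline{\mathcal D}$; the exceptional divisor of a point blow up has self-intersection $-1=\overline{\om}_{E_\sigma}$, and the strict transform of a divisor $E$ has self-intersection lowered by the number $k$ of blown-up points on it, giving $\om_E-k=\overline{\om}_E$ (\cite{SELFINTERS}). For incidences, after blowing up $\sigma$ the divisor $E_\sigma$ meets the strict transform of every divisor through $\sigma$, and if $\sigma$ lay on two divisors $E,F$ — meeting transversally since $D$ has normal crossings — the strict transforms of $E$ and $F$ become disjoint; this is exactly the rule of Definition \ref{BU} (connect $\sigma$ to the divisors through it, withdraw the edge $E$–$F$).

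Finally I would match the points and branches. A point of $D'\cap Y'$ is either a point of $D\cap Y$ not blown up — unchanged, with the same branches — or a point of $E_\sigma\cap Y'$ for some $\sigma\in c_{\mathcal D}$; in the latter case the points of $E_\sigma$ on $Y'$ are indexed by the directions at $\sigma$ occupied by $Y_\sigma$, that is, by the tangent lines of $\sigma$, since branches $i,j$ with $o(i,j)>1$ are tangent and their strict transforms pass through the same point of $E_\sigma$, while branches with different tangent cones separate. This is precisely $o_{\overline{\mathcal D}}=(o_{\mathcal D}\setminus c_{\mathcal D})\cup\bigcup_\sigma\{\text{tangent lines of }\sigma\}$ together with $\overline{\mathcal C}_\ell=\ell$, and then the data on each branch and on each pair from $\widetilde{\mathcal C}$ agree with those prescribed by Theorems \ref{BLOWINV1} and \ref{BLOWINV2}. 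The blow down statement is proved the same way, reversing each identification: $o^*_{\mathcal D}$ — points not lying on a $(-1)$-divisor — are the points left untouched by $\pi$; contracting a $(-1)$-curve $E$ raises by one the self-intersection of every divisor meeting $E$ and makes two such divisors meet; $\underline{\mathcal C}_E$ collects the branches at all points of $E$; and Corollaries \ref{BLOWDINV1}, \ref{BLOWDINV2} supply the numerical data. The main obstacle is the bookkeeping in this last step: confirming that the only points of $E_\sigma$ that survive as points of the resolution step are the tangent lines of $\sigma$ (the remaining new point, the transversal crossing of $E_\sigma$ with the strict transform of an old divisor through $\sigma$, carries no branch of $Y'$, hence is not a point of the step by our convention $o_{\mathcal D}=A$), and, dually for the blow down, that contracting the $(-1)$-curve does not produce a point on more than two divisors; it is cleanest to record first that such configurations do not occur along the resolution process of \ref{PARAGRAFO}. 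Everything else is a direct transcription of the definitions.
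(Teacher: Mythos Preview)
Your proposal is correct and follows exactly the approach the paper intends: the authors give no proof beyond the remark that the lemma ``is an immediate consequence of the previous definitions,'' and what you have written is precisely the bookkeeping that justifies this claim. There is nothing to add or compare.
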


\begin{definition}\label{RESOLG} 
We say that a normal crossings graph is a \em resolution graph \em if 
there is a plane curve that is  transformed  into the normal crossings graph by a sequence of blow ups.
We say that a normal crossings graph is \em contractible \em 
if there is a sequence of blow downs that takes the normal crossings graph into a plane curve.
On the step previous to the last blow down, we get the empty graph and the normal crossings graph reduces to a single vertex.
We call this vertex the \em root \em of the normal crossings graph.
\end{definition}

\noindent
The root of a resolution graph is therefore the divisor created by the first blow up.
We call \em root of a resolution step \em to the strict transform of the divisor created by the first blow up.

\begin{theorem}\label{CONTR}
A normal crossings graph is a  resolution graph  if and only if it is contractible.
\end{theorem}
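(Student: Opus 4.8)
The plan is to prove both implications by passing between the purely combinatorial world of resolution steps and the geometric world of triples $(X,D,Y)$, using Lemma \ref{UPDOWNLEMMMA} as the bridge. The "only if" direction is almost immediate from the definitions: if a normal crossings graph $\mathcal G$ is a resolution graph, then by Definition \ref{RESOLG} there is a plane curve $Y$ and a sequence of blow ups carrying $[X,\emptyset,Y]$ to $\mathcal G$. By Lemma \ref{UPDOWNLEMMMA}, each combinatorial blow up is realized by a geometric blow up, and one checks that a geometric blow up of a triple at the locus where normal crossings fails is inverted by the geometric blow down along the last exceptional divisor (weight $-1$); the combinatorial shadow of this is exactly the blow down of Definition \ref{BD}. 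Reversing the sequence of blow ups and applying the corresponding blow downs thus contracts $\mathcal G$ back to $[X,\emptyset,Y]$, i.e. to a plane curve, so $\mathcal G$ is contractible. The one point needing care is that the blow down of Definition \ref{BD} is genuinely inverse to the blow up of Definition \ref{BU} on the nose (same weights, same contact exponents), which follows from checking that Corollaries \ref{BLOWDINV1} and \ref{BLOWDINV2} invert Theorems \ref{BLOWINV1} and \ref{BLOWINV2} — a routine but necessary verification.

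For the "if" direction, suppose $\mathcal G$ is contractible. By Definition \ref{RESOLG} there is a sequence of blow downs $\mathcal G = \mathcal G_0 \to \mathcal G_1 \to \cdots \to \mathcal G_N$ with $\mathcal G_N$ the empty graph, i.e. a plane curve as a resolution step. The strategy is to reverse this sequence: I want to exhibit an actual plane curve germ $Y$ and an actual sequence of geometric blow ups whose combinatorial trace is $\mathcal G_N \to \cdots \to \mathcal G_0 = \mathcal G$. Since $\mathcal G_N$ is a first step (empty divisor graph), it is the resolution step $[X,\emptyset,Y]$ of some concrete plane curve germ $Y$: one reads off from $\mathcal G_N$ the number of branches, their Puiseux exponents, and their mutual contact exponents, and builds explicit Puiseux expansions $y=\sum_\al a_\al x^\al$ realizing this data (condition (6) of Definition \ref{STEPDEF} — the ultrametric inequality — is exactly what guarantees such a family of expansions is consistent). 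Then I apply geometric blow ups to $(X,\emptyset,Y)$; by Lemma \ref{UPDOWNLEMMMA} the combinatorial side tracks the sequence of combinatorial blow ups, and I must argue this reproduces $\mathcal G_{N-1}, \ldots, \mathcal G_0$.

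The main obstacle is precisely this last step: showing that the combinatorial blow up sequence starting from $\mathcal G_N$ recovers the given contraction sequence in reverse, rather than some other sequence. A priori the blow up of Definition \ref{BU} always blows up at \emph{all} non-normal-crossings points simultaneously, whereas the blow down sequence might have contracted one $(-1)$-divisor at a time; so I would need to check that the order does not matter, or reorganize the contraction into "rounds" matching the canonical blow up. The cleanest route is an induction on $N$: assuming the result for shorter contractions, one shows that the single combinatorial blow up $\overline{\mathcal G_N}$ equals $\mathcal G_{N-1}$ when $\mathcal G_{N-1}$ arose from $\mathcal G_N$ by one blow down, which reduces to verifying that blow up and blow down are mutually inverse operations on resolution steps — again via the computation that Corollaries \ref{BLOWDINV1}–\ref{BLOWDINV2} invert Theorems \ref{BLOWINV1}–\ref{BLOWINV2}, together with bookkeeping on the self-intersection weights (the $\om_E \mapsto \om_E - k$ and $\om_E \mapsto \om_E + k$ adjustments cancel). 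Once blow up and blow down are known to be inverse, both implications collapse: contractibility gives a blow down sequence to a plane curve, reversing it via blow ups (realized geometrically by Lemma \ref{UPDOWNLEMMMA} on a concrete $Y$) exhibits $\mathcal G$ as a resolution graph, and conversely.
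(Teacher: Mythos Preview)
Your proposal follows the same underlying idea as the paper --- blow up and blow down are mutually inverse, so the two notions coincide --- but you take a considerably longer route than the paper does. The paper's entire proof is one sentence: if the graph is contractible, blow it down to a plane curve and ``reverse the procedure'' to exhibit it as a resolution graph. The argument stays entirely within the combinatorial framework of resolution steps; it never invokes Lemma~\ref{UPDOWNLEMMMA} or constructs an actual geometric germ. Your detour through geometry --- realizing $\mathcal G_N$ as $[X,\emptyset,Y]$ for a concrete $Y$ built from Puiseux expansions --- is unnecessary, because ``plane curve'' in Definition~\ref{RESOLG} already means the combinatorial object (a resolution step with empty graph $\mathcal D$), and Definitions~\ref{BU} and~\ref{BD} are purely combinatorial operations on such objects. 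Once you strip out the geometric layer, your argument collapses to exactly the paper's: contract, then reverse.

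Where your write-up adds something the paper does not is in flagging the real content: that the combinatorial blow up of Definition~\ref{BU} actually inverts the combinatorial blow down of Definition~\ref{BD}. You correctly note this needs Theorems~\ref{BLOWINV1}--\ref{BLOWINV2} and their Corollaries to match up, together with the weight bookkeeping. The paper treats this as self-evident and does not spell it out. Your worry about ``rounds'' (simultaneous blow up at all non-NC points versus the order in which $(-1)$-divisors were blown down) is also a genuine subtlety that the paper's one-liner does not address; but since the paper does not engage with it either, your proposal is not missing anything relative to the paper --- it is, if anything, more careful.
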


\begin{proof}
If the normal crossings  graph is contractible, we can blow it down  into a plane curve and reverse the procedure, 
obtaining the initial resolution graph.
\end{proof}

\noindent
A resolution graph $\mathcal R$ is \em minimal \em if there is no divisor $E$ of weigth $-1$ of $\mathcal R$ such that after blowing down $E$ we obtain a normal crossings graph. Given the germ of a plane curve $Y$, there is one and only one minimal resolution graph $\mathcal R_Y$ that is a resolution graph of $Y$.

\noindent
Given a divisor $E$ of a resolution step we call \em valence \em of $E$ to the number of divisors of that resolution step that intersect $E$.
 We say that a resolution step is \em linear \em if all of its divisors have valence $0,1$ or $2$.
If a resolution step has only one divisor, this divisor is simultaneously the root and a terminal vertex of the resolution step.   
 Otherwise we call \em terminal vertex \em of a resolution step to its vertices of valence $1$ that are not the root.

\section{Resolution Graphs of Smooth Type} \label{TSMOOTH}

\noindent
Let $Y$ $[\Sigma]$ be a [smooth] plane curve of $(X,o)$. We call the pair $(Y,\Sigma)$ a \em logarithmic plane curve \em if $Y$ and $\Sigma$ have no common irreducible components. We say that a logarithmic plane curve is \em singular \em  if the curve $Y\cup \Sigma$ is not a normal crossings divisor. We call \em branches \em of $(Y,\Sigma)$  to the branches of $Y$. Let $C$ be a smooth curve transversal to $\Sigma$. Define $\tau_{C}(Y)$ as the smallest positive integer $\tau$ such that $o(Y_i,C)<\tau$ for each singular branch $Y_i$ of $Y$.

\noindent
Let $\pi:\widetilde{X}\rightarrow X$ be the composition of blow ups that desingularizes $Y\cup \Sigma$. Let $\widetilde{Y}$ [$\widetilde{\Sigma}$] be the strict transform of $Y$ [$\Sigma$] by $\pi$. We call \em resolution graph \em of $(Y,\Sigma)$  to the graph $\mathcal R^{\Sigma}_{Y}$ obtained in the following manner:
the vertices of $\mathcal R^{\Sigma}_{Y}$ are the vertices of  $\mathcal R_{Y\cup\Sigma}$ plus $\widetilde{\Sigma}$;
the branches of $\widetilde Y$ are the branches of $\widetilde{Y\cup\Sigma}$ minus $\widetilde\Sigma$.
We call $\widetilde\Sigma$ the \em root \em of $\mathcal R^{\Sigma}_{Y}$.
We set $\om_{\widetilde \Sigma}=-c-1$, where $c$ is the number of times we blow up a point of a strict transform of $\Sigma$.

\noindent
We call \em log resolution graph \em to a pair $\mathcal R^{\Sigma}=(\mathcal R,\Sigma)$, where:

\noindent
$(i)$ $\mathcal R$ is a normal crossings graph;

\noindent
$(ii)$ $\Sigma$ is a divisor of valence $1$ of $\mathcal R$;

\noindent
$(iii)$ If we consider $\Sigma$ a branch of $\mathcal R$, $\mathcal R$ becomes the resolution graph of a plane curve;

\noindent
$(iv)$ The image $\underline \Sigma$ of $\Sigma$, by the blow down of $\mathcal R$ into a plane curve, is a smooth component of $Y$;

\noindent
$(v)$ $\om_\Sigma=-c-1$, where $c$ is the number of times necessary to blow down a divisor intersecting $\Sigma$ when we blow down $\mathcal R$ into a plane curve.

\noindent
A normal crossings graph $\mathcal R$ is a log resolution graph if and only if there is a logarithmic plane curve $(Y,\Sigma)$ such that $\mathcal R=\mathcal R^{\Sigma}_{Y}$.

\noindent
Let $\mathcal D$ be a resolution step and $E$ a smooth divisor of $\mathcal D$ 
with weight $\omega_E$ and valence $\vartheta_E$. 
We call \em rectified weight of E \em to the number $\omega_E+\vartheta_E$.

\begin{definition}
Let $\mathcal R$ [$\mathcal R^{\Sigma}$] be a [log] resolution graph. 
We say that  $\mathcal R$ [$\mathcal R^{\Sigma}$] is a [\em log\/\em ]\em  elementary graph of smooth type \em
if the root of $\mathcal R$ [$\mathcal R^{\Sigma}$] has rectified weigth $-1$ and the other divisors of  $\mathcal R$ [$\mathcal R^{\Sigma}$] have  rectified weigth $0$.
\end{definition}

\begin{theorem}\label{SMOOTHLOG}
Let $(Y,\Sigma)$ be a logarithmic plane curve. Assume $\Sigma$ smooth. 
The graph $\mathcal R^{\Sigma}_{Y}$ is a log elementary graph of smooth type if and only if 
all branches of $Y$ are non singular and transversal to $\Sigma$.
\end{theorem}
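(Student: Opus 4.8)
The plan is to translate the two hypotheses on $Y$ --- that every branch is smooth, and that every branch is transversal to $\Sigma$ --- into two numerical facts about the resolution algorithm applied to the plane curve $Y\cup\Sigma$. Let $c$ be the number of blow ups performed at a point of a strict transform of $\Sigma$ (so $\omega_{\widetilde\Sigma}=-c-1$), and let $b$ be the number of blow ups performed at a point that already lies on two divisors. I would prove three statements: (i) $\mathcal R^{\Sigma}_{Y}$ is a log elementary graph of smooth type if and only if $c\le 1$ and $b=0$; (ii) $c\le 1$ if and only if no branch of $Y$ is tangent to $\Sigma$; (iii) assuming $c\le 1$, one has $b=0$ if and only if every branch of $Y$ is smooth. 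Together these give the theorem. (If $(Y,\Sigma)$ is not singular the assertion is immediate, so I assume at least one blow up occurs; then $\widetilde\Sigma$ has valence $1$ in $\mathcal R^{\Sigma}_{Y}$, as $\mathcal R^{\Sigma}_{Y}$ is a log resolution graph.)

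For (i) the tool is the behaviour of the rectified weight $\omega_E+\vartheta_E$ under blow up. If a point $\sigma$ belonging to exactly $k$ of the divisors already present ($k\in\{0,1,2\}$) is blown up, the new divisor $E$ has $\omega_E=-1$ and valence $k$, hence rectified weight $k-1$; and each of the $k$ old divisors through $\sigma$ loses $1$ from its self-intersection while keeping the same valence (for $k=2$ it trades the edge to the other old divisor for an edge to $E$), so its rectified weight decreases by $1$ when $k=2$ and is unchanged when $k\le 1$. Therefore the sum of the rectified weights of the exceptional divisors of $\mathcal R_{Y\cup\Sigma}$ equals $-1-b$. Since each exceptional divisor is born with rectified weight $-1$, $0$ or $1$ and its rectified weight never increases afterwards, one gets that $b=0$ if and only if the first exceptional divisor $E_1$ has rectified weight $-1$ and every other exceptional divisor has rectified weight $0$. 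Finally, passing from $\mathcal R_{Y\cup\Sigma}$ to $\mathcal R^{\Sigma}_{Y}$ turns $\widetilde\Sigma$ into a divisor of valence $1$, which raises by $1$ the valence of the one exceptional divisor meeting $\widetilde\Sigma$ (this is $E_1$ exactly when $c\le 1$) and leaves everything else unchanged, while $\widetilde\Sigma$ acquires rectified weight $\omega_{\widetilde\Sigma}+1=-c$. Reading this against the definition of a log elementary graph of smooth type yields (i).

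Statement (ii) is straightforward: the origin lies on $\Sigma$ and is blown up first, so $c\ge 2$ precisely when the point $\widetilde\Sigma\cap E_1$ must be blown up a second time, i.e. precisely when some branch of $Y$ has the same tangent cone as $\Sigma$. For one direction of (iii), suppose every branch of $Y$ is smooth and (because $c\le 1$) transversal to $\Sigma$. After the first blow up each branch is smooth and transversal to $E_1$, and the branches cluster only at interior points of $E_1$, never at $\widetilde\Sigma\cap E_1$. One then argues by induction on the number of blow ups that every point blown up after the first lies in the interior of a single divisor: blowing up an interior point of a divisor $E$ carrying a cluster of branches sends each of those branches --- which remain smooth and transversal to the new divisor $E'$ --- to a point of $E'$ other than $E\cap E'$, so the points where normal crossings still fails are again interior points. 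Hence no blow up occurs at a point on two divisors and $b=0$.

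The converse half of (iii) is the step I expect to be the main obstacle: showing that a singular branch forces $b\ge 1$. If $Y_i$ is a singular branch, Theorem \ref{BLOWINV1} shows that under repeated blow up of the point supporting its strict transform the first Puiseux exponent decreases by $1$ each time until it lands in $(1,2)$, at which stage the strict transform becomes tangent to the exceptional divisor $E$ it meets. Resolving this tangency forces a blow up at a point on two divisors: if the point of tangency already lies on two divisors this is clear, and otherwise, after blowing it up, the strict transform of $Y_i$ and that of $E$ both pass through $E\cap E'$ (their common tangent direction), which is then a point on two divisors where normal crossings fails and so must be blown up. In either case $b\ge 1$, and by (i) this rules out smooth type. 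Combining (i), (ii) and (iii) completes the proof.
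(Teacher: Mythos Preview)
Your proof is correct and takes a route that differs from the paper's, chiefly in the converse direction. The paper argues both implications by a parallel induction: forward, it maintains through the blow-ups the three invariants $(1)_\ell$, $(2)_\ell$, $(3)_\ell$ (normal crossings away from terminal divisors; the rectified-weight pattern; branches smooth, transversal, and meeting a single divisor); for the converse it blows down the depth-$N$ (weight $-1$) divisors of the smooth-type tree and checks that the same invariants survive all the way to $\ell=0$, where $(3)_0$ gives the conclusion. Your forward half of (iii) is essentially this same induction, but your converse replaces the blow-down argument by two global devices: the identity ``sum of rectified weights $=-1-b$'' over the exceptional divisors of $\mathcal R_{Y\cup\Sigma}$, which collapses the smooth-type pattern into the single equation $b=0$, and then a direct appeal to Theorem~\ref{BLOWINV1} to show that any singular branch, once its first exponent has been driven into $(1,2)$, forces a blow-up at a double point. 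The paper's approach has the virtue of symmetry---the converse literally reverses the forward induction---while yours isolates two independent numerical obstructions $c$ and $b$, each with a clean geometric meaning; the rectified-weight sum formula is a pleasant observation that does not appear in the paper and could be reused elsewhere.
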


\begin{proof}
We will use the notations of Paragraph \ref{PARAGRAFO}. Set $\Sigma^{(0)}=\Sigma$. Let $\Sigma^{(\ell+1)}$ be the strict transform of $\Sigma^{(\ell)}$ by $\pi_{\ell+1}$, for each $\ell\geq 0$.
Assume that all branches of $Y$ are non singular and transversal to $\Sigma$. Set $D_1=\pi^{-1}_{1}(\Sigma)$.
Consider the statements: \em 

\noindent
$(1)_\ell$ $D_\ell\cup Y^{(\ell)}$ is normal crossings at each point of a non terminal divisor $E$.

\noindent
$(2)_\ell$  $\Sigma^{(\ell)}$ has rectified weight $-1$ and the remaining  irreducible components of $D_{\ell}$ have rectified weight $0$.

\noindent
$(3)_\ell$ Each branch of $Y^{(\ell)}$ is non singular, intersects only one divisor and is transversal to this divisor.

\noindent\em
Statements $(i)_\ell$, $i=1,2,3$, $\ell\ge 1$, can be proved by induction in $\ell$.
Let $N$ be the maximal element of $\{o(Y_i,Y_j): i,j\in I, i\not=j\}$.
The curve $Y$ is desingularized after $N$ blow ups.
By $(2)_N$, $\mathcal R^{\Sigma}_{Y}$ is a log elementary graph of smooth type.

\noindent
Let us prove the converse.
We associate to each vertex $E$  of a rooted tree with root $\Sigma$ its \em depth, \em the number of edges we have to cross in order to go from  $E$ to $\Sigma$.
We call depth of the tree to the supreme of the depths of its elements. 
Let $N$ be the depth of $\mathcal R^{\Sigma}_{Y}$.
Let $\pi:\widetilde X\to X$ be the minimal sequence of blow ups that desingularizes $(Y,\Sigma)$.
Set $D_N=\pi^{-1}(\Sigma)$. Let $Y^{(N)}$ [$\Sigma^{(N)}$] be the strict transform of $Y$ [$\Sigma$] by $\pi$.
Hence $Y^{(N)},\Sigma^{(N)},D_N$ verify $(i)_N$, $1\le i\le 3$. The divisors of depth $N$ have valence $1$, hence weigth $-1$. Let us blow down these divisors.
Let $Y^{(N-1)}, \Sigma^{(N-1)}, D_{N-1}$ 
be the images of $Y^{(N)}, \Sigma^{(N)},D_{N}$ by the blow down.
Statements $(1)_{N-1},(2)_{N-1}$ are easily verified. 
If a branch of $Y^{(N-1)}$ was singular, its strict transform would be singular or tangent to a divisor.
If a branch of $Y^{(N-1)}$ intersected two divisors, the blow up of its intersection would create a divisor of positive rectified weigth.
If a branch of $Y^{(N-1)}$ was tangent to a divisor, its strict transform would intersect two divisors.
Hence we can iterate the procedure. 
By $(3)_0$ the branches of $Y$ are smooth and transversal to $\Sigma$.
\end{proof}

\begin{corollary}\label{SMOOTH}
Let $Y$ be a singular plane curve.
Its resolution graph $\mathcal R_Y$ is an elementary graph of smooth type if and only if 
all branches of $Y$ are non singular.
\end{corollary}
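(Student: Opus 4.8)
The plan is to deduce Corollary~\ref{SMOOTH} from Theorem~\ref{SMOOTHLOG} by the standard device of adding a generic smooth component. Given a singular plane curve $Y$, choose a smooth curve $\Sigma$ through the origin whose tangent cone is transversal to the tangent cone of every branch of $Y$; such a $\Sigma$ exists since there are only finitely many branches. Then $(Y,\Sigma)$ is a logarithmic plane curve with $\Sigma$ smooth, and every branch $Y_i$ of $Y$ satisfies $o(Y_i,\Sigma)=1$, so $Y_i$ is transversal to $\Sigma$ regardless of whether $Y_i$ is smooth. Hence by Theorem~\ref{SMOOTHLOG}, $\mathcal R^{\Sigma}_Y$ is a log elementary graph of smooth type if and only if every branch of $Y$ is non singular.

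The remaining work is to relate $\mathcal R^{\Sigma}_Y$ to $\mathcal R_Y$. First I would observe that, because $\Sigma$ is chosen transversal to all branches, the first blow up already separates $\widetilde\Sigma$ from $\widetilde Y$ at the root $D_1$: after that blow up no further blow up ever touches a point lying on a strict transform of $\Sigma$. Consequently $c=0$ in the definition of $\mathcal R^{\Sigma}_Y$, so $\omega_{\widetilde\Sigma}=-1$, and the sequence of blow ups that desingularizes $Y\cup\Sigma$ is exactly the one that desingularizes $Y$, with $\widetilde\Sigma$ a valence-one divisor attached to the root of $\mathcal R_{Y}$. In other words, $\mathcal R^{\Sigma}_Y$ is obtained from the resolution step underlying $\mathcal R_Y$ by attaching one extra $(-1)$-divisor $\widetilde\Sigma$ to the root and declaring it the new root; equivalently, $\mathcal R_Y$ is recovered from $\mathcal R^{\Sigma}_Y$ by blowing down $\widetilde\Sigma$.

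Next I would translate the rectified-weight conditions across this blow down. Let $E_0$ be the root of $\mathcal R_Y$ (the exceptional divisor of the first blow up) and $E_0'$ its image as a non-root divisor in $\mathcal R^{\Sigma}_Y$, so that blowing down $\widetilde\Sigma$ lowers the valence of $E_0'$ by one and raises its self-intersection by one; thus the rectified weight $\omega+\vartheta$ of $E_0'$ is unchanged, and likewise every other divisor keeps its rectified weight. Therefore: $\mathcal R^{\Sigma}_Y$ is a log elementary graph of smooth type (root has rectified weight $-1$, all other divisors rectified weight $0$) exactly when, in $\mathcal R_Y$, the root $E_0$ has rectified weight $-1$ and all other divisors rectified weight $0$ — and by the definition preceding Theorem~\ref{SMOOTHLOG}, that last condition is precisely the statement that $\mathcal R_Y$ is an elementary graph of smooth type. (One must check $\widetilde\Sigma$ itself has rectified weight $-1+1=0$, consistent with it being a non-root divisor of $\mathcal R^{\Sigma}_Y$, which is automatic since $\omega_{\widetilde\Sigma}=-1$ and its valence is $1$.) Chaining this equivalence with the one from Theorem~\ref{SMOOTHLOG} gives the corollary.

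The main obstacle is the bookkeeping in the middle step: one must verify carefully that adding the transversal $\Sigma$ really does leave the resolution process of $Y$ untouched after the first blow up (so that no divisor of $\mathcal R_Y$ acquires a spurious change in weight or valence), and that the single extra vertex $\widetilde\Sigma$ attaches to the root rather than elsewhere. This is where one uses transversality of $\Sigma$ to \emph{all} branches together with the normal-crossings analysis already carried out in the proof of Theorem~\ref{SMOOTHLOG}; everything else is a direct comparison of the two graphs' rectified weights.
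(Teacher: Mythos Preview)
Your overall strategy---adjoin a smooth $\Sigma$ transversal to every branch of $Y$ and invoke Theorem~\ref{SMOOTHLOG}---is exactly the paper's one-line proof. But the bookkeeping you supply in the middle step is wrong in several places, and as written the argument does not cohere.

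First, $c=1$, not $0$: the very first blow up is centred at the origin, which lies on $\Sigma$, and the paper's convention counts it (this is confirmed in the discussion following Lemma~\ref{WEIGTH}, where a transversal $\Sigma$ is attached to the root of $\mathcal R_Y$ with $\omega_\Sigma=-2$). Hence $\omega_{\widetilde\Sigma}=-2$. Second, $\widetilde\Sigma$ is by definition the \emph{root} of $\mathcal R^\Sigma_Y$, not a non-root divisor as you write; its rectified weight is $-2+1=-1$, exactly what a root must have. Third, passing from $\mathcal R^\Sigma_Y$ to $\mathcal R_Y$ is not a blow down (you cannot blow down a $(-2)$-vertex); one simply deletes $\widetilde\Sigma$. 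This lowers the valence of $E_0$ by one while leaving its self-intersection unchanged, so the rectified weight of $E_0$ \emph{drops} by one rather than staying fixed. That drop is precisely what is needed: $E_0$ goes from a non-root divisor of $\mathcal R^\Sigma_Y$ with rectified weight $0$ to the root of $\mathcal R_Y$ with rectified weight $-1$, while every other divisor is untouched. With these three corrections the equivalence $\mathcal R^\Sigma_Y$ log elementary of smooth type $\Leftrightarrow$ $\mathcal R_Y$ elementary of smooth type goes through and your reduction to Theorem~\ref{SMOOTHLOG} is complete.
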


\begin{proof}
Let $Y$ be a singular plane curve. Let $\Sigma$ be a smooth curve transversal to $Y$. The graph $\mathcal R_Y$ is an elementary graph of smooth type if and only if  $\mathcal R^{\Sigma}_{Y}$ is a log elementary graph of smooth type.
\end{proof}

\noindent 
We shall depict the vertices of an elementary graph of smooth type using white dots.

\begin{example}\label{EXE1}
\em An elementary graph of smooth type. \em
Let $Y_i=\{ y=\varphi_i (x)\}$, $1\le i\le 5$, where 
$\varphi_1(x)=x^{2}$,
$\varphi_2(x)=x^{3}+x^{4}$, 
$\varphi_3(x)=x^{3}-x^{4}$, 
$\varphi_4(x)=-x^{3}+x^{4}$ and 
$\varphi_5(x)=-x^{3}-x^{4}$. Set $Y=\cup_{i=1}^{5}Y_i$.

\begin{figure}[h!]
\centering
\includegraphics{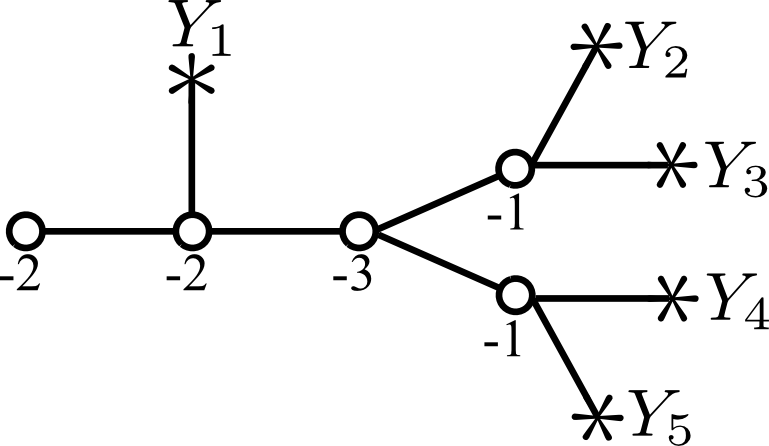}
\label{FIGEXE1}
\caption{The elementary graph $\mathcal R_{Y}$.}
\end{figure}

\end{example}

\section{Geometry of Continuous Fractions}\label{CONTINUOUS}  

\noindent
Let $(X,o)$ be a germ of a smooth complex manifold. Let $\Sigma$ and $C$ be two smooth transversal curves of $(X,o)$. 
Choose a system of local coordinates $(x,y)$ of  $(X,o)$ such that $\Sigma=\{x=0\}$ and $C=\{ y=0\}$.
Let $\Xi$ be the union of $\{\Sigma,C\}$ with the set of divisors that are created when we desingularize the curves $\{y^k-x^n=0\}$, with $(k,n)=1$.
We will identify each element of $\Xi$ with each of its strict transforms.

\noindent
Let $\mathbb Q_+$ be the set of positive integers. Set  $\overline{\mathbb Q}_{+}=\mathbb Q_{+}\cup \{0,+\infty\}$.
The main purpose of this section is to prove Lemma \ref{BIJECCAO}.

\noindent
Let us give an invariant definition of $\Xi$.
Set $X_0=X$, $D_0=\Sigma\cup C$. 
Let $\pi_{n}:X_{n}\rightarrow X_{n-1}$ be the blow up of $X_{n-1}$ along the singular locus of $D_{n-1}$. 
Set $D_{n}=\pi^{-1}_{n}(D_{n-1})$. 
Let $\mathcal R_{n}$ be the dual graph of $D_n$.
We will identify each irreducible component of each $D_n$ with the corresponding strict transforms by blow up. 
Let $\Xi_n$ be the set of  equivalence classes of the vertices of  $\mathcal R_{n}$. 
Set $\Xi=\cup_n\Xi_n$.
If $E,F\in \Xi$ have representatives $E',F'$ such that $E'\cap F'\not=\emptyset$, we will denote by $E \ast F$ the equivalence class of the  divisor created by the blow up of $E'\cap F'$.
We will identify an element of $\Xi$ with a convenient representative whenever this identification does not create a serious ambiguity.

\noindent
Let $E,F,G\in\Xi$. There is $m$ such that $E,F,G$ are vertices of $\mathcal R_{m}$.
We say that $F$ is \em between $E$ and $G$ \em, and write $E-F-G$, if $F$ is a vertice of each path of $\mathcal R_{m}$ that connects $E$ and $G$.
Since the graphs $\mathcal R_{m}$ are linear, the relation $E<F$ if 
$\Sigma - E - F$ is a total order of $\Xi$.

\noindent
Given integers $a_1,\ldots,a_n$, $a_1\geq 0$, $a_2,\ldots,a_n\geq 1$, $n\geq 1$, we define recursively the rational number  $[a_1,\ldots,a_n]$ by $[a_1]=a_1$ and $[a_1,\ldots,a_n]=a_1+1/[a_2,\ldots,a_n]$, for $n\geq 2$. Each non negative rational number admits an \em expansion in continuous fraction \em $[a_1,\ldots,a_n]$.
Each positive rational number admits two expansions. 
A positive integer $n$ admits the expansions  $[n]$ and $[n-1,1]$. If $\alpha\in\mathbb Q_{+}\setminus \mathbb Z$, $\al$ admits expansions  $[a_1,\ldots,a_n]$ and  $[a_1,\ldots,a_n-1,1]$, with  $a_n\ge 2$.
We set $[n,0]=+\infty$, for each non negative integer $n$.

\noindent
If $a_n\geq 1$, we define the \em length \em of $[a_1,\ldots,a_n]$ as $|[a_1,\ldots,a_n]|=a_1+\cdots+a_n$.
We set $|0|=|+\infty|=0$. 
Given   $\alpha=[a_1,\ldots,a_k]$, $\beta=[b_1,\ldots,b_\ell]\in\overline{\mathbb Q}_{+}$, we  say that $\alpha,\beta\in \overline{\mathbb Q}_{+}$ are \em related, \em  and write $\alpha\sim\beta$, if there is a non negative integer $n$ such that $\alpha=[b_1,\ldots,b_\ell,n]$ or $\beta=[a_1,\ldots,a_k,n]$. If  $\beta=[a_1,\ldots,a_k,n]$, we define the \em convolution of $\alpha$ and $\beta$ \em as the positive rational $\alpha\ast\beta=\beta\ast\alpha=[a_1,\ldots,a_k ,n+1]$.
\begin{equation}\label{CONVOLUTION}
\hbox{\rm  If $\alpha\sim\beta$,}   \qquad  \alpha\sim\alpha\ast\beta   \hbox{ \ \rm and } ~ |\alpha\ast\beta|=\max\{|\alpha|,|\beta|\}+1.
\end{equation}

\noindent
Given $\alpha,\beta,\gamma\in\overline{\mathbb Q}_{+}$, we say that $\beta$ \em is between $\alpha$ and $\gamma$ \em, and write $\alpha - \beta - \gamma$, if $\alpha<\beta<\gamma$ or $\gamma<\beta<\alpha$. Given $R\subset \overline{\mathbb Q}_{+}$ and $\alpha,\beta\in R$, we say that $\alpha$ and $\beta$ are \em contiguous \em in $R$, and write $\alpha R \beta$, if there is no $\gamma\in R$ such that $\alpha ~ - ~ \gamma ~ -  ~\beta$.

\noindent
Notice that $0\ast 1/n=[0]\ast [0,n]=[0,n+1]=1/(n+1)$, $n\ast +\infty=[n]\ast [n,0]=[n,1]=n+1$ and 
\begin{displaymath}
0 ~ - ~ \frac{1}{n+1} ~ - ~\frac{1}{n},\;\;n ~- ~n+1 ~- ~+\infty
\end{displaymath}

\noindent
Let  $\alpha,\beta,\gamma,\delta\in{\mathbb Q}_{+}$, $\delta=[a_1,\ldots,a_k]$, with $k\geq 1$. If $\alpha~ - ~ \beta ~ - ~ \gamma$,
\begin{displaymath}
\frac{1}{\alpha}~ - ~ \frac{1}{\beta} ~ - ~ \frac{1}{\gamma}\;\; \textrm{and}\;\;  \left(a_k+\frac{1}{\alpha}\right) ~ -  ~ \left(a_k+\frac{1}{\beta}\right)  ~ -  ~ \left(a_k+\frac{1}{\gamma}\right).
\end{displaymath}

\noindent
Hence $[\delta,\alpha]-[\delta,\beta]-[\delta,\gamma]$. Therefore $\alpha - \alpha\ast\beta - \beta$ whenever $\alpha\sim\beta$. Moreover,  $\alpha\sim\beta$ and $\alpha<\beta$ implies that
\begin{equation}\label{INEQ}
\alpha<\alpha\ast\beta<\beta.
\end{equation}
Relation (\ref{INEQ}) also holds if $\alpha=0$ or $\beta=+\infty$.

\noindent
Let $\alpha,\beta\in{\mathbb Q}_{+}$. We say that $\alpha$ \em precedes \em $\beta$, and we write $\alpha\prec\beta$, if $\beta=[a_1,\ldots,a_k]$ and there are integers $\ell,b$ such that $\alpha=[a_1,\ldots,a_{\ell},b]$ and $0\leq\ell\leq k-1,1\leq b\leq a_{\ell+1}$. The partial order $\prec$ induces in $\mathbb Q_{+}$ a structure of tree with root $1$. We say that $\al$ \em immediately precedes \em $\beta$ if $\al\prec\be$ and $|\be|=|\al|+1$.

\noindent
Given two divisors $E,F$ of a resolution graph $\mathcal R$, we will say that $E$ \em precedes $F$ \em 
and write $E\prec F$ if there is a sequence of divisors $E=E_0,E_1,\ldots,E_n=F$ 
such that $E_i$ is created blowing up a point of $E_{i-1}$, $1\le i \le n$.

\begin{lemma}\label{BIJECCAO}
There is a bijection $\alpha\mapsto E_{\alpha}$ from $\overline{\mathbb Q}_{+}$ onto $\Xi$ such that
\begin{itemize}
\item[$(a)$]
$\alpha<\beta$ if and only if $E_{\alpha}<E_{\beta}$.
\item[$(b)$]
If $\alpha\sim\beta$, $E_{\alpha}\ast E_{\beta}=E_{\alpha\ast\beta}$.
\item[$(c)$]
$\al\prec\be$ if and only if $E_\al\prec E_\be$.
\end{itemize}
\end{lemma}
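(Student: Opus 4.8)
The plan is to construct the bijection explicitly by induction on the number of blow ups, tracking both the combinatorial structure of the linear graphs $\mathcal R_n$ and the arithmetic of continuous fractions in lock-step. First I would set $E_0 = \Sigma$, $E_{+\infty} = C$, and $E_1 = \Sigma \ast C$ (the first exceptional divisor); these are forced by condition $(a)$ together with the displayed inequalities $0 - 1/(n+1) - 1/n$ and $n - n+1 - +\infty$ recorded before the lemma. For the inductive step, suppose the vertices of $\mathcal R_n$ have been matched with a finite subset $R_n \subset \overline{\mathbb Q}_+$ in such a way that $(a)$ holds and the contiguity relation $\alpha R_n \beta$ corresponds exactly to adjacency in the linear graph $\mathcal R_n$. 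Passing from $\mathcal R_n$ to $\mathcal R_{n+1}$ blows up each singular point of $D_n$, i.e.\ each intersection point of two adjacent divisors $E_\alpha, E_\beta$; I would send the newly created divisor to $\alpha \ast \beta$, which is legitimate because $\alpha R_n \beta$ forces $\alpha \sim \beta$ (one is obtained from the other by appending a single entry, exactly the configuration that arises along the linear chain resolving $y^k = x^n$), and because \eqref{INEQ} guarantees $\alpha < \alpha\ast\beta < \beta$, so the new element sits in the correct position for $(a)$.

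The key arithmetic input is the convolution identity \eqref{CONVOLUTION}: $|\alpha \ast \beta| = \max\{|\alpha|,|\beta|\}+1$, together with $\alpha \sim \alpha \ast \beta$. This is precisely what matches the length function on continuous fractions with the "generation" of a divisor under blow up, and it is what makes the induction close: every element of $\overline{\mathbb Q}_+$ of length $\le n+1$ is either already in $R_n$ or is the convolution of a contiguous pair in $R_n$, so $R_{n+1} = R_n \cup \{\alpha\ast\beta : \alpha R_n\beta\}$ exhausts exactly the rationals of length $\le n+1$. Hence $\bigcup_n R_n = \overline{\mathbb Q}_+$, giving surjectivity, while injectivity and $(a)$ are maintained throughout since at each stage we insert each new element strictly between an existing contiguous pair. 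Property $(b)$ is then immediate from the construction: for $\alpha \sim \beta$ with, say, $\beta$ a one-step extension of $\alpha$, the divisors $E_\alpha, E_\beta$ become contiguous at the stage $\max\{|\alpha|,|\beta|\}$ and their star is by definition $E_{\alpha\ast\beta}$.

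For $(c)$ I would argue that $\alpha$ immediately precedes $\beta$ in the tree $(\mathbb Q_+,\prec)$ exactly when $E_\beta$ is created by blowing up a point lying on $E_\alpha$ (and on no earlier-created divisor on the other side), and then extend to the general case by transitivity of $\prec$ and of the divisor-precedence relation; the definitions of $\prec$ on $\mathbb Q_+$ and on divisors were set up in the paragraphs preceding the lemma precisely to make this transparent. The main obstacle I anticipate is the bookkeeping at the endpoints and the two-sided nature of the order: one must check that appending an entry "on the left" (producing $1/(n+1)$ from $1/n$, or more generally $[a_1,\dots,a_k,n]$) really does correspond to blowing up the intersection with the correct neighbour, and that the conventions $[n,0]=+\infty$, $|0|=|+\infty|=0$ make the divisors $\Sigma$ and $C$ behave as the two "boundary" leaves of every $\mathcal R_n$. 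Verifying that no rational number is produced twice — equivalently, that the two continuous-fraction expansions $[a_1,\dots,a_n]$ and $[a_1,\dots,a_n-1,1]$ of a non-integer are attached to the same divisor rather than to two — is the delicate point, and it is exactly here that one uses the identification of an element of $\Xi$ with "a convenient representative" and the passage to equivalence classes of vertices in the definition of $\Xi_n$.
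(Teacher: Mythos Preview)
Your approach is essentially the paper's: build the bijection by induction on the level $m$, matching $\overline{\mathbb Q}_m$ with the vertex set of $\mathcal R_m$, using \eqref{CONVOLUTION} and \eqref{INEQ} to insert each convolution in the right slot. Two points deserve sharpening. First, the surjectivity step ``$R_{n+1}$ exhausts exactly the rationals of length $\le n+1$'' is asserted but not argued; the paper closes this by a cardinality count, showing $\#\overline{\mathbb Q}_m = 2^m+1 = \#\mathcal R_m$ (each $\alpha\in\mathbb Q_+$ immediately precedes exactly two rationals, so $\#\{\alpha:|\alpha|=m\}=2^{m-1}$). Without this count, or an equivalent direct argument that every length-$(n{+}1)$ rational arises as a convolution of a contiguous pair in $R_n$, your induction does not close. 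Second, your parenthetical for $(c)$ is inverted: $\alpha$ \emph{immediately} precedes $\beta=\alpha\ast\gamma$ precisely when $|\alpha|>|\gamma|$, i.e.\ when the \emph{other} divisor $E_\gamma$ was created \emph{earlier} than $E_\alpha$, not later; the paper handles $(c)$ by reducing to the case $|\beta|=|\alpha|+1$ and invoking $(b)$, then chaining through an intermediate $\delta$ of length $m$ for the general case. Your final paragraph about the two continuous-fraction expansions is a red herring: well-definedness of $|\cdot|$ and $\ast$ is settled before the lemma, and plays no role in the proof itself.
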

\noindent
We will denote the inverse of $\alpha\mapsto E_{\alpha}$ by $E\mapsto \alpha^{E}$.

\begin{proof}
If $\mathcal R_{m}$ has vertices $E^{0}<E^{1}<\cdots<E^{r_{m}}$, 
$\mathcal R_{m+1}$ has vertices  $E^{0}<E^{0}\ast E^{1}<E^{1}<E^1\ast E^2<E^2\cdots<E^{r_{m}}$. Since $\mathcal R_{0}$ has two vertices, $\mathcal R_{m}$ has $2^{m}+1$ vertices, for each $m\geq 1$.

\noindent
Set $\overline{\mathbb Q}_{m}=\{\alpha\in \overline{\mathbb Q}_{+}:|\alpha|\leq m\}$ for each $m\ge 0$.
Each $\al\in\mathbb Q_+$ immediately precedes two elements of $\mathbb Q_+$: $1$ immediately precedes $2$ and $1/2$;
if $\al=[a_0,\ldots,a_k]$ and $a_k\ge 2$, $\al$ immediately precedes  $[a_0,\ldots,a_{k}+1]$, $[a_0,\ldots,a_k-1,2]$ and no other rational numbers. Hence $\#\{\al :|\al|=m\}=2^{m-1}$ and $\#\overline{\mathbb Q}_m=2^m+1$, for each $m\geq 1$.

\noindent
Let us show that, for each $m\ge 1$:

\noindent
$(1)_m$ $\overline{\mathbb Q}_{m}=\{\alpha_0,\ldots,\alpha_{2^{m}}\}$, $\alpha_0<\cdots<\alpha_{2^{m}}$, $|\alpha_i|=m$ if $i$ odd, $|\alpha_i|<m$ if $i$ even.

\noindent
$(2)_m$ If $\alpha,\beta\in\overline{\mathbb Q}_{m}$ and $\alpha\overline{\mathbb Q}_{m}\beta$, $\alpha\sim\beta$ .

\noindent
$(3)_m$ There is a bijection $\al\mapsto E_\al$ from $\overline{\mathbb Q}_{m}$ onto $\mathcal R_m$ such that $(a),(c)$ hold for $\al,\be\in\overline{\mathbb Q}_{m}$ and $(b)$ holds for $\al,\be\in\overline{\mathbb Q}_{m-1}$.

\noindent
Notice that $\overline{\mathbb Q}_{0}=\{0,+\infty\}$. If $|\alpha|=1$, $\alpha=1$. Hence $\overline{\mathbb Q}_{1}=\{0,1,+\infty\}$. Since  $+\infty=[0,0]$, $0\sim +\infty$ and $0\ast +\infty=[0,1]=1$.
By (\ref{CONVOLUTION}), $0\sim 1$ and $1\sim +\infty$. If $|\alpha|=2$, $\alpha=[0,2]=1/2$ or $\alpha=[1,1]=2$. Since $0\ast 1=[0]\ast [0,1]=[0,2]$ and $1\ast +\infty = [1]\ast [1,0]=[1,1]=2$, $0\sim 1/2$, $1/2 \sim 1$, $1 \sim 2$ and $2\sim +\infty$.

\noindent
Assume that $(1)_m$  holds for $i=1,2$.
By  (\ref{CONVOLUTION}) and (\ref{INEQ}),
\begin{displaymath}
|\alpha_{2i-2}\ast\alpha_{2i-1}|=|\alpha_{2i-1}\ast\alpha_{2i}|=m+1
\end{displaymath}

\noindent
and
\begin{equation}\label{DESIG}
\alpha_{2i-2}<\alpha_{2i-2}\ast\alpha_{2i-1}<\alpha_{2i-1}<\alpha_{2i-1}\ast\alpha_{2i}<\alpha_{2i},
\end{equation}

\noindent
for $1\leq i\leq 2^{m-1}$. Hence $(1)_{m+1}$ holds and $(2)_{m+1}$ follows from (\ref{CONVOLUTION}).

\noindent
Set $E_0=\Sigma$ and $E_{+\infty}=C$.
Assume that $(3)_m$ holds.
By $(b)$, $E$ has a unique extension to $\overline{\mathbb Q}_{m+1}$. By (\ref{DESIG}), $(a)$ holds for $\alpha,\beta\in\overline{\mathbb Q}_{m+1}$. By the definitions of $"\alpha\ast\beta"$ and $"E\ast F"$, $(b)$ holds for $\alpha,\beta\in\overline{\mathbb Q}_{m}$.

\noindent
Assume $\alpha\in\mathbb Q_{m}$, $|\beta|=m+1$.

\noindent
Assume $\alpha\prec\beta$. If $|\alpha|=m$, there is $\gamma\in\mathbb Q_{m}$ such that $\beta=\alpha\ast\gamma$. Hence $E_{\alpha}\prec E_{\beta}$. Otherwise, there is $\delta$ such that $|\delta|=m$ and $\alpha\prec\delta\prec\beta$. Hence $E_{\alpha}\prec E_{\delta}\prec E_{\beta}$.

\noindent
Assume $E_{\alpha}\prec E_{\beta}$. If $|\beta|=|\alpha|+1$, there is $\gamma\in\mathbb Q_{m}$ such that $E_{\beta}=E_{\alpha}\ast E_{\gamma}$. Hence $\beta=\alpha\ast\gamma$ and $\alpha\prec\beta$. Otherwise, there are $\gamma,\delta\in\mathbb Q_{m}$ such that $E_{\beta}=E_{\gamma}\ast E_{\delta}$ and $E_{\alpha}\prec E_{\gamma}$. Hence  $\alpha\prec\gamma\prec\gamma\ast\delta=\beta$.
\end{proof}

\section{Resolution Graphs of Cusp Type} \label{CSMOOTH}

\noindent
We keep the notations of Section \ref{TSMOOTH}. Let $Y$ be a germ of an irreducible plane curve of $(X,o)$, $Y\neq\Sigma$.
Assume that if $Y$ is singular, $Y$ has maximal contact with $C$ or  $Y$ has maximal contact with $\Sigma$. 
Assume $Y$ has at most one Puiseux exponent.
Given $\alpha> 1$ [$\alpha<1,\al=1$] we say that $Y$ is a \em cusp of exponent $\alpha$ relative to $(\Sigma,C)$ \em if 
$o(Y,C)=\alpha$ [$o(Y,\Sigma)=1/\alpha, Y$ is transversal to $C$ and $\Sigma$]. We assume $\alpha=p/q$, where $p,q$ are positive integers and $(p,q)=1$.

\noindent
Let $(x,y)$ be a system of local coordinates of $(X,o)$ such that $\Sigma=\{x=0\}$ and $C=\{y=0\}$. 
If $Y$ is a cusp of exponent $\alpha$ and $\alpha\geq 1$ [$\alpha<1$], there is $\varepsilon\in\mathbb C\{x^{1/q}\}$ [$\varepsilon\in\mathbb C\{y^{1/p}\}$] such that $Y=\{y=x^{\alpha}\varepsilon\}$ [ $Y=\{x=y^{\alpha}\varepsilon\}$] and $\varepsilon(0)\neq 0$. We say that $\varepsilon(0)$ is the \em coefficient of $Y$ relative to the system of local coordinates $(x,y)$. \em We say that $C$ has exponent $+\infty$ and coefficient $0$. Notice that the coefficient of a cusp depends on the system of local coordinates but the fact that two cusps have the same exponent but different coefficients does not.

\begin{lemma}\label{CUSP1}
Let $Y$ $[Y_i]$ be a cusp of exponent $\alpha$ relatively to $(\Sigma,C)$, $i=1,2$. 
Let $\widetilde{Y}$ $[\widetilde{Y}_i]$ be the strict transform of $Y$ $[Y_i]$ by the minimal resolution of $Y\cup \Sigma\cup C$ $[Y_i\cup \Sigma\cup C]$. Then
\begin{itemize}
\item[$(a)$]
$\widetilde{Y}$ is transversal to $E_{\alpha}$.
\item[$(b)$]
$\widetilde{Y}_1\cap \widetilde{Y}_2=\emptyset$ if and only if $\widetilde{Y}_1$ and $\widetilde{Y}_2$ have different coefficients.
\end{itemize}
\end{lemma}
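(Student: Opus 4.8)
The plan is to reduce the statement to an explicit calculation with Puiseux expansions and then invoke the dictionary between $\overline{\mathbb Q}_+$ and $\Xi$ established in Lemma \ref{BIJECCAO}. First I would treat the case $\alpha\ge 1$, writing $Y=\{y=x^\alpha\varepsilon\}$ with $\varepsilon(0)\ne 0$ and $\alpha=p/q$, $(p,q)=1$; this means $Y$ has a parametrization $x=t^q$, $y=t^p\varepsilon(t^q)$ with first Puiseux exponent $\alpha$ (or $Y$ smooth if $\alpha\in\mathbb Z$). The curves $\Sigma$ and $C$ are $E_0$ and $E_{+\infty}$ in the notation of Section \ref{CONTINUOUS}, and the divisors created in resolving $\{y^q-x^p=0\}$ are exactly the $E_\beta$, $\beta\in\mathbb Q_+$. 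The key point is to track, step by step along the blow ups of the minimal resolution of $Y\cup\Sigma\cup C$, which divisor the strict transform of $Y$ meets and with what contact order; by Theorem \ref{BLOWINV1} and Theorem \ref{BLOWINV2} (applied with $Z$ one of the monomial cusps defining the $E_\beta$), the contact order $o(\widetilde Y, E_\beta)$ evolves exactly as the convolution/continued-fraction combinatorics of Section \ref{CONTINUOUS} predicts, so that $\widetilde Y$ first becomes transversal precisely over $E_\alpha$. The case $\alpha<1$ is symmetric: swapping the roles of $x$ and $y$ interchanges $\Sigma$ and $C$ and replaces $\alpha$ by $1/\alpha$, and the case $\alpha=1$ is the trivial observation that a curve transversal to both $\Sigma$ and $C$ already meets $E_1=\Sigma\ast C$ transversally after one blow up.

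For part $(a)$ more concretely, I would argue by induction on the number of blow ups. Using the local parametrization, at each stage the strict transform $\widetilde Y^{(\ell)}$ is again a cusp (with at most one Puiseux exponent) sitting at the intersection point of two of the divisors $E_{\beta'}$, $E_{\beta''}$ that are contiguous among those already created, and its contact orders with these two divisors are governed by Theorem \ref{BLOWINV2}(3): if its remaining exponent exceeds the "ambient" exponent it stays tangent to one of them, if it equals it the curve separates transversally, and the value of the exponent after blow up is the one dictated by the formulas in Theorem \ref{BLOWINV1}. The bookkeeping identity that makes this close up is precisely relation (\ref{INEQ}) together with the length formula in (\ref{CONVOLUTION}): the exponent of $\widetilde Y$ relative to the ambient divisor strictly decreases in length at each step, so after finitely many blow ups it hits $\alpha$ exactly, and at that stage $o(\widetilde Y, E_\alpha)>1$ with no further tangency forced, i.e. $\widetilde Y$ is transversal to $E_\alpha$. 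I expect the main obstacle to be handling the bookkeeping cleanly when $\alpha$ (or an intermediate exponent) is an integer, since then one of the two continued-fraction expansions degenerates and a divisor is "skipped"; this is exactly the dichotomy between cases (1) and (2) of Theorem \ref{BLOWINV1}, and it must be matched against the two expansions $[n]$ and $[n-1,1]$ of an integer discussed in Section \ref{CONTINUOUS}.

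For part $(b)$, having identified the divisor $E_\alpha$ that both $\widetilde Y_1$ and $\widetilde Y_2$ meet transversally, I would compute the point of $E_\alpha$ at which each strict transform meets it. Because $Y_1$ and $Y_2$ have the same exponent $\alpha$, their parametrizations agree in all terms of order $<\alpha$ (by the convention on Puiseux expansions fixed in Section \ref{INVSEC}, equivalently $o(Y_1,Y_2)\ge\alpha$), so the two branches follow the same chain of blow ups and arrive on $E_\alpha$; the transversal intersection point is then controlled by the leading coefficient, i.e. by $\varepsilon_i(0)$, which is by definition the coefficient of $Y_i$. Concretely, after the sequence of blow ups bringing $\widetilde Y_i$ to $E_\alpha$, in suitable coordinates $\widetilde Y_i$ is $\{v=\varepsilon_i(0)+\cdots\}$ on $E_\alpha=\{u=0\}$ up to a fixed change of chart independent of $i$, so $\widetilde Y_1\cap\widetilde Y_2\cap E_\alpha=\emptyset$ iff $\varepsilon_1(0)\ne\varepsilon_2(0)$; and since both curves are already transversal to $E_\alpha$, no further blow up is performed on either, so $\widetilde Y_1\cap\widetilde Y_2=\emptyset$ iff they meet $E_\alpha$ at distinct points iff their coefficients differ. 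The delicate point here, again, is the integer case and the fact that the coefficient is only well defined relative to a coordinate system — one must check that the \emph{equality or inequality} of coefficients is coordinate-independent, which is the remark made just before the statement of the lemma, and which follows because a change of coordinates preserving $\Sigma=\{x=0\}$ and $C=\{y=0\}$ rescales all coefficients of exponent $\alpha$ by the same nonzero factor.
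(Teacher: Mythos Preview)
Your proposal is correct and follows the same approach as the paper, which simply writes ``By induction in $|\alpha|$.'' Your induction on the number of blow ups is exactly induction on $|\alpha|$, since one blow up takes a cusp of exponent $\alpha=[a_1,\ldots,a_k]$ to a cusp of exponent $[a_1-1,a_2,\ldots,a_k]$ (if $a_1\ge 2$) or $[a_2,\ldots,a_k]$ (if $a_1=1$) relative to the new pair of divisors, reducing $|\alpha|$ by one; your detailed tracking via Theorems \ref{BLOWINV1}, \ref{BLOWINV2} and the dictionary of Lemma \ref{BIJECCAO} is precisely what the paper leaves implicit in that one line.
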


\begin{proof}
By induction in $|\alpha|$.
\end{proof}

\begin{lemma}\label{CUSP2}
Let $(Y,\Sigma)$ be a logarithmic plane curve. Assume $\mathcal R^{\Sigma}_{Y}$ linear. Assume that $\widetilde{\Sigma}$ has valence $1$ and the terminal vertex of $\mathcal R^{\Sigma}_{Y}$ does not have weight $-1$. Then
\begin{itemize}
\item[$(a)$]
Each branch of $Y$ has at most one Puiseux exponent.
\item[$(b)$]
There is a smooth curve $C$ transversal to $\Sigma$ such that all singular branches of $Y$ have maximal contact with $C$ or $\Sigma$.
\item[$(c)$]
If $Y_i,Y_j$ are smooth branches of $Y$ transversal to $\Sigma$, $o(Y_i,Y_j)\leq \tau_{C}(Y)$.
\item[$(d)$]
All branches of $Y$ are cusps relatively to $(\Sigma,C)$. If two branches of $Y$ are cusps with the same exponent, they have different coefficients.
\end{itemize}
\end{lemma}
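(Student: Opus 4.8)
The plan is to run the resolution algorithm of Paragraph \ref{PARAGRAFO} on $(Y,\Sigma)$ and exploit the hypothesis that $\mathcal R^{\Sigma}_{Y}$ is linear with $\widetilde\Sigma$ of valence $1$ and with a terminal vertex of weight $\neq -1$. First I would observe that linearity of $\mathcal R^{\Sigma}_{Y}$ forces strong restrictions at every intermediate step: whenever we blow up a point $\sigma$, the divisor created must have valence at most $2$ in the final graph, so at $\sigma$ there can be at most one tangent line of branches together with at most one divisor through $\sigma$, and moreover once a branch acquires a second divisor (i.e. becomes tangent to an exceptional divisor after a blow up, cf.\ Theorem \ref{BLOWINV1}(2)) no further branching may occur along it. Formally I would set up, as in the proof of Theorem \ref{SMOOTHLOG}, statements $(1)_\ell,(2)_\ell,\dots$ describing the shape of $D_\ell\cup Y^{(\ell)}$ and prove them by induction on $\ell$, the key inductive invariant being: at each non–normal–crossings point the set $\widetilde{\mathcal C}_\sigma$ has a single tangent line of branches.

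For part $(a)$, I would argue that if some branch $Y_i$ had two Puiseux exponents $\alpha_{i,1}<\alpha_{i,2}$, then by Theorem \ref{BLOWINV1} the resolution of $Y_i$ alone (which is a sub-procedure, since $Y_i\cup\Sigma$ is resolved along the way) produces a divisor of valence $3$ — the classical Enriques/Brieskorn–Knörrer picture of a branch with $g\ge 2$ characteristic pairs has a "T"-shaped piece at the end of the first satellite chain — contradicting linearity. So $g_i\le 1$ for all $i$. For part $(b)$, by $(a)$ each singular branch has a single characteristic exponent, say $\alpha_i=p_i/q_i$; I would let $C$ be the smooth curve $\{y=0\}$ for a coordinate system in which $\Sigma=\{x=0\}$ and in which the first blow-up separates the tangent lines appropriately, i.e.\ $C$ is chosen transversal to $\Sigma$ and, after the initial blow-ups that separate distinct tangent directions, the branch $\widetilde{Y_i}$ runs through the appropriate exceptional component; the linearity hypothesis is exactly what guarantees that all singular branches can be simultaneously funneled this way, so that each singular $Y_i$ has maximal contact with $C$ (if $\alpha_i>1$) or with $\Sigma$ (if the branch is tangent to $\Sigma$, forcing $\alpha_i<1$). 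For part $(c)$, if $Y_i,Y_j$ are smooth and transversal to $\Sigma$ then they are cusps of exponent $1$ relative to $(\Sigma,C)$ once $\tau_C(Y)$ blow-ups have been performed along $C$; having contact $o(Y_i,Y_j)>\tau_C(Y)$ would mean $\widetilde{Y_i}$ and $\widetilde{Y_j}$ still meet at a point of a single exceptional divisor after the algorithm terminates, which contradicts either the normal-crossings property of $\mathcal R^{\Sigma}_Y$ or the weight hypothesis on the terminal vertex (that point would have to be blown up, creating a divisor of weight $-1$ at the end of the chain). Part $(d)$ is then a synthesis: combining $(a)$, $(b)$, $(c)$ with the definition of cusp relative to $(\Sigma,C)$ shows every branch of $Y$ is such a cusp; and if two branches were cusps of the same exponent with the same coefficient, then by Lemma \ref{CUSP1}(b) their strict transforms would still intersect, again forcing an extra blow-up and violating the weight hypothesis at the terminal vertex.

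The main obstacle I anticipate is part $(b)$: producing a single curve $C$ that simultaneously witnesses maximal contact for all singular branches, and verifying that the dichotomy (maximal contact with $C$ versus with $\Sigma$) is exhaustive. This requires carefully tracking, through the blow-ups, which exceptional divisor each branch becomes transversal to, and using the linearity of the graph to rule out the case where two singular branches would need "incompatible" auxiliary curves — equivalently, ruling out that the resolution graph branches at the first divisor that carries more than one tangent line. The bookkeeping is essentially the combinatorial translation (via Lemma \ref{UPDOWNLEMMMA} and Definitions \ref{BU}, \ref{BD}) of Theorems \ref{BLOWINV1} and \ref{BLOWINV2}, and the technical heart is checking that "linear resolution graph" $\Leftrightarrow$ "at every step, at most one tangent line of genuinely singular branches plus at most one exceptional divisor meeting them." Once that equivalence is nailed down, $(a)$–$(d)$ follow by the case analysis sketched above.
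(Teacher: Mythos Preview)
Your plan is correct and follows essentially the same route as the paper: for each item you argue by contradiction using linearity (for $(a)$, $(b)$, $(d)$) or the weight hypothesis on the terminal vertex (for $(c)$), and Lemma~\ref{CUSP1} is indeed what closes $(d)$.

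Two remarks on efficiency. First, the paper dispenses with the inductive framework $(1)_\ell,(2)_\ell,\dots$ entirely: for $(b)$ it argues directly that if no common $C$ exists then there are two singular branches $Y_i,Y_j$ with $o(Y_i,Y_j)<\min\{\lambda_i,\lambda_j\}$, and after $o(Y_i,Y_j)$ blow-ups they sit, still singular, at distinct smooth points of the exceptional divisor, immediately killing linearity. Second, for $(c)$ your sentence ``that point would have to be blown up, creating a divisor of weight $-1$ at the end of the chain'' is the right intuition but is not yet a proof that the $-1$ divisor is actually the terminal vertex of $\mathcal R^{\Sigma}_{Y}$. The paper handles this cleanly by observing that after $\tau=\tau_C(Y)$ blow-ups, \emph{all} branches passing through the point $\sigma=Y^{(\tau)}_i\cap Y^{(\tau)}_j$ are smooth and transversal to $E_\tau$ (the singular ones having already peeled off to divisors $E$ with $\alpha^E<\tau$, by Lemma~\ref{CUSP1}); hence the germ $(\widetilde Y_\sigma,E_\tau)$ satisfies the hypotheses of Theorem~\ref{SMOOTHLOG}, its log resolution graph is of smooth type, and a terminal vertex of a smooth-type graph has rectified weight $0$, i.e.\ weight $-1$. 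Invoking Theorem~\ref{SMOOTHLOG} here both shortens the argument and removes any ambiguity about which vertex ends up terminal.
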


\begin{proof}
Statement $(a)$ is well known. See for instance \cite{BRIES}. Let us prove $(b)$. Let $\lambda_i$ be the exponent of each singular branch $Y_i$ of $Y$ transversal to $\Sigma$. If the curve $C$ does not exist, there are $i,j$ such that $o(Y_i,Y_j)<\min\{\lambda_i,\lambda_j\}$. After blowing up $Y_i\cap Y_j$ $o(Y_i,Y_j)$-times, the curves $Y_i$ and $Y_j$ are still singular and intersect different smooth points of $D_{o(Y_i,Y_j)}$. Hence $\mathcal R^{\Sigma}_{Y}$ is not linear.

\noindent
Assume that $(c)$ does not hold. Set $\tau=\tau_{C}(Y)$. Let $Y'$ [$\Sigma'$] be the germ of $Y^{(\tau)}$ [$E_{\tau}$] at $Y^{(\tau)}_i\cap Y^{(\tau)}_j$. By Lemma \ref{CUSP1}, the strict transforms of the singular branches of $Y$ only intersect divisors $E$ such that $\alpha^{E}<\tau$. Hence, the curve $(Y',\Sigma')$ verifies the conditions of Theorem \ref{SMOOTHLOG}. Therefore $\mathcal R^{\Sigma'}_{Y'}$ and $\mathcal R^{\Sigma}_{Y}$ have a terminal vertex of weight $-1$.

\noindent
Statement $(d)$ follows from Lemma \ref{CUSP1} and the linearity of $\mathcal R^{\Sigma}_{Y}$.
\end{proof}

\noindent
Let $(Y,\Sigma)$ be a logarithmic plane curve such that $\mathcal R^{\Sigma}_{Y}$ verifies the conditions of Lemma \ref{CUSP2}. Assume $Y_{i}$ is a branch of $Y$ such that $o(Y_i,C)>\tau_{C}(Y)$. By statement $(c)$, $Y_i$ is smooth and $o(Y_j,C)\leq \tau_{C}(Y)$, for all $j\neq i$. Hence we can take $C=Y_i$. Under these conditions, the set of divisors of $\mathcal R^{\Sigma}_{Y}$ is naturally identified with a subset of $\Xi$ (and a subset of $\overline{\mathbb Q}_{+}$).

\begin{lemma}\label{WEIGTH}
Assume $(Y,\Sigma)$ verifies the conditions of Lemma \ref{CUSP2}. Then:
\begin{itemize}
\item[$(a)$]
$E_{\alpha}\in \mathcal R^{\Sigma}_{Y}$ if and only if $\alpha$ belongs
to the smallest subtree $R$ of $(\mathbb Q_{+},\prec)$ that contains $1$ and
the exponents of the branches of $Y$.
\item[$(b)$]
If $\alpha=[a_1,\ldots,a_k]\in R$,
 \[
\omega_{E_{\alpha}}=-1-\max  \{c,0:[a_1,\ldots,a_k,c]\in R\}-\max
\{c,0:[a_1,\ldots,a_{k}-1,1,c]\in R\}.
\]
\end{itemize}
\end{lemma}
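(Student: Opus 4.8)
The plan is to argue by induction on the number of blow ups that desingularize $Y\cup\Sigma\cup C$, tracking simultaneously which divisors appear and what their self-intersections are, using the combinatorial blow up of Definition \ref{BU} together with Lemma \ref{BIJECCAO} to identify divisors with elements of $\overline{\mathbb Q}_+$. The key observation behind $(a)$ is that a divisor $E_\alpha$ is created during the resolution of $(Y,\Sigma)$ precisely when some strict transform of a branch of $Y$ (or of $C$, once we normalize $C=Y_i$ as in the paragraph preceding the lemma) passes through the point that is blown up to produce $E_\alpha$. By Lemma \ref{CUSP1}(a), the strict transform of a cusp of exponent $\alpha$ meets only the divisor $E_\alpha$ transversally and otherwise follows the chain of divisors $E_\beta$ with $\beta\prec\alpha$; since the chain of points blown up to reach $E_\alpha$ corresponds exactly to the $\prec$-predecessors of $\alpha$, the set of divisors appearing is exactly $\bigcup_i\{E_\beta:\beta\preceq\alpha_i\}\cup\{E_\beta:\beta\preceq 1\}$, where $\alpha_i$ runs over the exponents of the branches. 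This union is by definition the smallest $\prec$-subtree $R$ containing $1$ and all the $\alpha_i$, which gives $(a)$.

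For $(b)$ I would compute $\omega_{E_\alpha}$ by reversing the blow down description of Definition \ref{BD} and Corollary \ref{BLOWDINV1}: a divisor starts with weight $-1$ when created, and each subsequent blow up of a point lying on it decreases its weight by one. So $-\omega_{E_\alpha}-1$ counts the number of points of (strict transforms of) $E_\alpha$ that get blown up in later stages of the resolution. A point of $E_\alpha$ that is blown up is either the intersection of $E_\alpha$ with an adjacent divisor created later, or the intersection of $E_\alpha$ with a strict transform of a branch of $Y$ that must still be separated from $E_\alpha$. By the convolution rule (\ref{CONVOLUTION}) and Lemma \ref{BIJECCAO}(b), the divisors adjacent to $E_\alpha$ on the ``larger'' side are the $E_{[a_1,\ldots,a_k,c]}$ and on the ``smaller'' side the $E_{[a_1,\ldots,a_k-1,1,c]}$; the two maxima in the formula count how far the subtree $R$ extends past $E_\alpha$ in each of the two directions, i.e. how many successive blow ups on each side of $E_\alpha$ occur. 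Branches of $Y$ passing through $E_\alpha$ contribute to exactly one of these two counts (they force the subtree $R$ to continue on the appropriate side), so they are already accounted for, and we get precisely the stated expression. The reference point is that $\omega_{E_1}=\omega_{\widetilde\Sigma}$ should come out consistent with the normalization $\omega_{\widetilde\Sigma}=-c-1$ from Section \ref{TSMOOTH}; one checks this is exactly the $k=1$, $a_1=1$ case of the formula.

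The main obstacle I anticipate is bookkeeping the interaction between the two ``sides'' of $E_\alpha$ when $\alpha$ is an integer versus when $\alpha\notin\mathbb Z$, because then $\alpha$ has two continued fraction expansions $[a_1,\ldots,a_k]$ and $[a_1,\ldots,a_k-1,1]$ and one must verify that the formula in $(b)$ is independent of the chosen expansion — the roles of the two maxima swap, but their sum is invariant. A second delicate point is justifying carefully that no extra blow ups on $E_\alpha$ occur beyond those forced by $R$: this is where linearity of $\mathcal R^\Sigma_Y$ (hypothesis of Lemma \ref{CUSP2}) and Lemma \ref{CUSP2}(d) (cusps of the same exponent have distinct coefficients, so by Lemma \ref{CUSP1}(b) their strict transforms are already disjoint on $E_\alpha$ and need not be separated further) are essential. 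Once these two points are settled, both statements follow by a straightforward induction on $|\alpha|$, mirroring the inductive scheme used in the proof of Lemma \ref{BIJECCAO}.
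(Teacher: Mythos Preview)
Your proposal is correct and follows essentially the same approach as the paper: both arguments track that $E_\alpha$ is created after $|\alpha|$ blow ups and then loses one unit of self-intersection for each subsequent blow up of a point on it, with cusps of type $[a_1,\ldots,a_k,c]$ and $[a_1,\ldots,a_k-1,1,d]$ sitting on the two sides of $E_\alpha$ with contact orders $c$ and $d$, so that exactly $c+d$ further blow ups touch $E_\alpha$. One small slip: in your consistency check, $\widetilde\Sigma$ is $E_0$, not $E_1$, so the case $k=1$, $a_1=1$ of the formula computes $\omega_{E_1}$ (the first exceptional divisor), not $\omega_{\widetilde\Sigma}$.
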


\begin{proof}
Let $Y$ [$Z$] be a cusp of type $[a_1,\ldots,a_k,c]$
[$[a_1,\ldots,a_{k}-1,1,d]$].
After blowing up $D_\ell\cap Y^{(\ell)}$, $\ell=0,1,\ldots,|\al|-1$,
one creates the divisor $E_\al$.
Moreover, the cusps $Y^{(|\al|)},Z^{(|\al|)}$ intersect $E_\al$ at
different points,
$o(Y^{(|\al|)},E_\al)=c$ and $o(Z^{(|\al|)},E_\al)=d$.

\noindent
After blowing up $D_\ell\cap Y^{(\ell)}$, $\ell=|\al|,\ldots,|\al|+c-1$ and
$D_\ell\cap Z^{(\ell)}$, $\ell=|\al|,\ldots,|\al|+d-1$,
$Y^{(|\al|+c)}$ and $Z^{(|\al|+d)}$ no longer intersect $E_\al$.
Moreover, $E_\al$ has weight $-1-c-d$.
\end{proof}

\noindent
Let $\mathcal R^{\Sigma}$ be a logarithmic resolution graph verifying the conditions of Lemma \ref{CUSP2}. We can give an explicit embedding of the set of vertices of $\mathcal R^{\Sigma}$ into $\mathbb Q_{+}\cup\{0\}$ by setting $\alpha^{\Sigma}=0$, $\alpha^{C}=+\infty$ and $\alpha^{E}=\alpha^{E'}\ast\alpha^{E''}$ whenever we are at a resolution step such that $\omega_{E}=-1$, $E'\cap E,E''\cap E\neq \emptyset$. Let $\tau(\mathcal R^{\Sigma})$ be the smallest positive integer $\tau$ such that for each non divisor $E$ of $\mathcal R^{\Sigma}$ with $\alpha^{E}\not\in \mathbb Z$, $\alpha^{E}<\tau$.

\noindent
Assume that the resolution graph $\mathcal R_{Y}$ of $Y$  is linear, its root has valence $1$ and its vertex does not have weight $-1$. Let $\Sigma$ be a smooth curve transversal to $Y$. We obtain $\mathcal R^{\Sigma}_{Y}$ from $\mathcal R_Y$ connecting $\Sigma$ to the root of $\mathcal R_Y$ and setting $\omega_{\Sigma}=-2$. By Lemma \ref{CUSP2}, the set of divisors of $R_Y$ is naturally identified with a subset of $\mathbb Q_{+}$. Set $\tau(\mathcal R_{Y})=\tau(\mathcal R^{\Sigma}_{Y})$.

\em\begin{definition}\label{LOGRAPH}
Let $\mathcal R$ [$\mathcal R^{\Sigma}$] be a [logarithmic\/] resolution graph. 
We say that $\mathcal R$ [$\mathcal R^{\Sigma}$] is a   [\em log\/\em ]\em  elementary graph of cusp type \em if:
\begin{itemize}
\item[$(a)$]
$\mathcal R$ [$\mathcal R^{\Sigma}$] is a linear graph and $\Sigma$ has valence $1$.
\item[$(b)$]
$\mathcal R$ [$\mathcal R^{\Sigma}$] is not a [log] elementary graph of smooth type.
\item[$(c)$]
If an integer divisor $E$ of $\mathcal R$ [$\mathcal R^{\Sigma}$] intersects a strict transform of a branch of the logarithmic curve, $\alpha^{E}=\tau(\mathcal R)$  [$\alpha^{E}=\tau(\mathcal R^{\Sigma})$].
\end{itemize}
\end{definition}\em

\noindent
Condition $(c)$ guarantees the uniqueness of the decomposition (see Definition \ref{GLUE} and Examples \ref{EXE3}, \ref{EXE4}).


\begin{theorem}\label{LINEARLOG}
Let $(Y,\Sigma)$ be a logarithmic plane curve. Its resolution graph is a log elementary graph of cusp type if and only if:
\begin{itemize}
\item[$(a)$]
there is a smooth curve $C$ transversal to $\Sigma$ and a system of local coordinates $(x,y)$ such that $\Sigma=\{x=0\}$, $C=\{y=0\}$ and all branches of $Y$ are cusps relatively to $(\Sigma,C)$.
\item[$(b)$]
If two branches of $Y$ have the same exponent, they have different coefficients.
\item[$(c)$]
If $Y_i$ is a branch with integer exponent $\alpha$, $\alpha=\tau_{C}(Y)$.
\item[$(d)$]
$Y$ has a singular branch or a branch tangent to $\Sigma$.
\end{itemize}
\end{theorem}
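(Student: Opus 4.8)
The plan is to prove the two implications separately, using as main tools Lemmas~\ref{CUSP1} and~\ref{CUSP2}, Theorem~\ref{SMOOTHLOG}, and the description of the divisors of a linear logarithmic resolution graph as a subset of $\overline{\mathbb Q}_{+}$ furnished by Lemmas~\ref{BIJECCAO} and~\ref{WEIGTH}.

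\emph{Sufficiency.} Assume $(a)$--$(d)$ and resolve $Y\cup\Sigma\cup C$ as in Paragraph~\ref{PARAGRAFO}. By $(a)$ each branch $Y_{i}$ is a cusp relative to $(\Sigma,C)$, so by Lemma~\ref{CUSP1}$(a)$ --- and an induction on the length of its exponent, as in that proof --- every blow-up has centre on the chain $\Xi$ of Section~\ref{CONTINUOUS} and the strict transform of $Y_{i}$ becomes transversal to $E_{\alpha_{i}}$, $\alpha_{i}$ being the exponent of $Y_{i}$; by $(b)$ and Lemma~\ref{CUSP1}$(b)$, branches with a common exponent separate before reaching their divisor. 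Thus $\mathcal R_{Y\cup\Sigma}$, hence $\mathcal R^{\Sigma}_{Y}$, is a sub-chain of $\Xi$ with $E_{0}=\widetilde{\Sigma}$ at one end, which is part $(a)$ of Definition~\ref{LOGRAPH}. By $(d)$ some branch is singular or tangent to $\Sigma$, so by Theorem~\ref{SMOOTHLOG} the graph is not of smooth type, which is part $(b)$. For part $(c)$: by Lemma~\ref{CUSP1}$(a)$ the only branches whose strict transforms meet an integer divisor are the cusps $Y_{i}$ of integer exponent $\alpha$, and each such transform meets exactly $E_{\alpha}$; condition $(c)$ gives $\alpha=\tau_{C}(Y)$, so it remains to identify $\tau_{C}(Y)$ with $\tau(\mathcal R^{\Sigma}_{Y})$.

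\emph{Necessity.} Assume $\mathcal R^{\Sigma}_{Y}$ is a log elementary graph of cusp type. It is linear with $\widetilde{\Sigma}$ of valence $1$ and is not of smooth type; the first step is to check that its terminal vertex does not have weight $-1$, so that Lemma~\ref{CUSP2} applies. Here condition $(c)$ of Definition~\ref{LOGRAPH} is used in an essential way: by Lemma~\ref{WEIGTH}$(b)$ the terminal vertex has weight $-1$ precisely when the corresponding rational is a $\prec$-maximal element of the relevant subtree $R$, and then a branch of $Y$ is attached to it, so condition $(c)$ together with the description of $\tau(\mathcal R^{\Sigma}_{Y})$ yields a contradiction --- the configuration $Y=\{y^{2}=x^{3}\}\cup\{y=x^{3}\}$, for which $(a)$, $(b)$, $(d)$ hold but $\tau_{C}(Y)=2\neq3$, shows that this hypothesis cannot be dropped. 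Once Lemma~\ref{CUSP2} applies, its parts $(b)$ and $(d)$ give a smooth curve $C$ transversal to $\Sigma$ together with the assertions that all branches of $Y$ are cusps relative to $(\Sigma,C)$ and that branches with a common exponent have distinct coefficients; choosing coordinates with $\Sigma=\{x=0\}$, $C=\{y=0\}$, this is $(a)$ and $(b)$. Statement $(d)$ follows from Theorem~\ref{SMOOTHLOG}, since a graph that is not of smooth type cannot arise from a curve all of whose branches are smooth and transversal to $\Sigma$. Finally $(c)$ follows from condition $(c)$ of Definition~\ref{LOGRAPH}, Lemma~\ref{CUSP1}$(a)$ and the identification $\tau(\mathcal R^{\Sigma}_{Y})=\tau_{C}(Y)$.

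The two places where real work is needed are the terminal-weight verification just described and the identification $\tau(\mathcal R^{\Sigma}_{Y})=\tau_{C}(Y)$, and I expect the former to be the main obstacle. For the identification, Lemma~\ref{WEIGTH}$(a)$ describes the divisors of $\mathcal R^{\Sigma}_{Y}$ as the $E_{\beta}$ with $\beta$ in the smallest $\prec$-subtree containing $1$ and the branch exponents, while for a singular branch $o(Y_{i},C)$ equals its exponent once that exponent exceeds $1$; one is then reduced to the elementary continued-fraction fact that a non-integer $\mu\prec\lambda$ has the same integer part as the non-integer $\lambda$, which follows from the order and betweenness relations of Section~\ref{CONTINUOUS} and in particular from~(\ref{INEQ}). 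With these two points settled, the rest is an assembly of the quoted results.
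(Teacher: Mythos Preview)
Your overall strategy matches the paper's: for necessity you invoke Lemma~\ref{CUSP2} and Theorem~\ref{SMOOTHLOG}, and for sufficiency you invoke Lemmas~\ref{BIJECCAO}, \ref{CUSP1} and~\ref{WEIGTH}. The paper's argument is correspondingly brief: statements $(a),(b)$ from condition~$(a)$ and Lemma~\ref{CUSP2}; statement $(d)$ from condition~$(b)$ and Theorem~\ref{SMOOTHLOG}; statement $(c)$ from condition~$(c)$ together with Lemma~\ref{CUSP2}$(c)$; and conversely conditions~$(a),(b),(c)$ from the corresponding statements via the quoted lemmas.

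Where you diverge is in treating the terminal-vertex weight hypothesis of Lemma~\ref{CUSP2} as a separate obstacle to be overcome. In the paper this is not an issue: the embedding $E\mapsto\alpha^{E}$ and the invariant $\tau(\mathcal R^{\Sigma})$ used in Definition~\ref{LOGRAPH}$(c)$ are introduced \emph{only} for log resolution graphs already satisfying the hypotheses of Lemma~\ref{CUSP2} (see the two paragraphs preceding Definition~\ref{LOGRAPH}). So a graph satisfying Definition~\ref{LOGRAPH} is, by the very meaning of its condition~$(c)$, one to which Lemma~\ref{CUSP2} applies; no further check is needed. Your proposed verification via Lemma~\ref{WEIGTH} would in any case be circular, since Lemma~\ref{WEIGTH} itself assumes the conditions of Lemma~\ref{CUSP2}.

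Your illustrative example $Y=\{y^{2}=x^{3}\}\cup\{y=x^{3}\}$ is also off: statement~$(a)$ of the theorem only asks that \emph{some} smooth $C$ exist, and with $C=\{y=x^{3}\}$ the second branch has exponent $+\infty$, so condition~$(c)$ is vacuous and all of $(a)$--$(d)$ hold (this is exactly the manoeuvre described after Lemma~\ref{CUSP2}). The example therefore does not exhibit a failure of the theorem. With the terminal-weight detour and the example removed, your proof collapses to the paper's.
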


\begin{proof}
Assume $\mathcal R^{\Sigma}_{Y}$ is a log elementary graph of cusp type.
Statement $(d)$ follows from condition $(b)$ (of Definition \ref{LOGRAPH}) and Theorem \ref{SMOOTHLOG}.
Statements $(a),(b)$ follow from condition $(a)$ and Lemma \ref{CUSP2}.
Assume that $Y$ has several smooth branches transversal to $\Sigma$.
By Lemma \ref{CUSP2} its exponents are smaller than or equal to $\tau_C(Y)$.
By condition $(c)$, these exponents are equal to $\tau_C(Y)$.

\noindent
Let us prove the converse.
Condition $(a)$ follows from statements $(a),(b)$ and Lemmas \ref{BIJECCAO},\ref{CUSP1}.
Condition $(b)$ follows from statement $(d)$ and Lemma \ref{WEIGTH}. 
Condition $(c)$ follows from statement $(c)$.
\end{proof}

\noindent
We will depict the vertices of an elementary graph of cusp type using black dots.

\begin{example}\label{EXE2}
\em An elementary graph of cusp type. \em
Let $Y_i=\{ y=\varphi_i (x)\}$, $1\le i\le 3$, where 
$\varphi_1(x)=x^{3/2}$,
$\varphi_2(x)=x^{5/3}$ and 
$\varphi_3(x)=x^{5/2}$. Set $Y=\cup_{i=1}^{3}Y_i$. 

\begin{figure}[h!]
\centering
\includegraphics{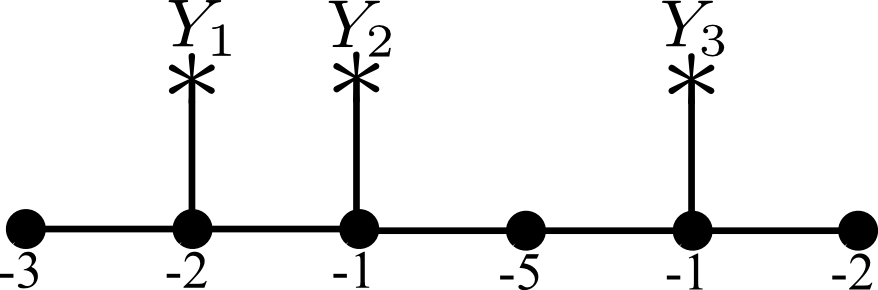}
\label{FIGEXE2}
\caption{The elementary graph $\mathcal R_{Y}$.}
\end{figure}

\end{example}

\begin{corollary}\label{LINEAR}
Let $Y$ be the germ of a plane curve. Its resolution graph is a elementary graph of cusp type if and only if:
\begin{itemize}
\item[$(a)$]
There is a smooth curve $\Sigma$ transversal to $Y$ and there is a smooth curve $C$ transversal to $\Sigma$ such that all branches of $Y$ are cusps relatively to $(\Sigma,C)$.
\item[$(b)$]
If two branches of $Y$ have the same exponent, they have different coefficients.
\item[$(c)$]
If $Y_i$ is a branch with integer exponent $\alpha$, $\alpha=\tau_{C}(Y)$.
\item[$(d)$]
$Y$ has a singular branch.
\end{itemize}
\end{corollary}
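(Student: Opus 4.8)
The plan is to deduce the corollary from Theorem \ref{LINEARLOG} in exactly the way Corollary \ref{SMOOTH} was deduced from Theorem \ref{SMOOTHLOG}. Given the germ $Y$, I would fix a smooth curve $\Sigma$ transversal to $Y$ and pass to the logarithmic plane curve $(Y,\Sigma)$. The key point is the equivalence: \emph{$\mathcal{R}_{Y}$ is an elementary graph of cusp type if and only if $\mathcal{R}^{\Sigma}_{Y}$ is a log elementary graph of cusp type.} Granting this, Theorem \ref{LINEARLOG} applied to $(Y,\Sigma)$ gives at once conditions $(a)$, $(b)$, $(c)$ of the corollary, since they are verbatim conditions $(a)$, $(b)$, $(c)$ of that theorem. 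The one clause needing comment is $(d)$: Theorem \ref{LINEARLOG}$(d)$ requires $Y$ to have a singular branch \emph{or} a branch tangent to $\Sigma$; but $\Sigma$ was chosen transversal to $Y$, hence transversal to the tangent line of every branch of $Y$, so no branch of $Y$ is tangent to $\Sigma$, and Theorem \ref{LINEARLOG}$(d)$ collapses to ``$Y$ has a singular branch'', which is condition $(d)$ of the corollary. In the reverse direction, condition $(a)$ of the corollary supplies the pair $(\Sigma,C)$ and the coordinate system needed to invoke Theorem \ref{LINEARLOG} the other way.

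To establish the key equivalence I would use the description of $\mathcal{R}^{\Sigma}_{Y}$ recalled just before Definition \ref{LOGRAPH}: $\mathcal{R}^{\Sigma}_{Y}$ is $\mathcal{R}_{Y}$ with one extra vertex $\widetilde\Sigma$, of weight $-2$ and valence $1$, joined to the root of $\mathcal{R}_{Y}$, the branches of the two graphs being the same (the branches of $Y$), since $\widetilde\Sigma$ now counts as a divisor rather than a branch. Under this correspondence: linearity of either graph is equivalent to linearity of the other, once one notes that the root of $\mathcal{R}_{Y}$ cannot have valence $0$, for clause $(b)$ of Definition \ref{LOGRAPH} together with Corollary \ref{SMOOTH} forces $Y$ to have a singular branch, and then at least two blow ups are needed to resolve $Y$; the vertex $\widetilde\Sigma$ has valence $1$ by construction; $\mathcal{R}_{Y}$ fails to be an elementary graph of smooth type precisely when $\mathcal{R}^{\Sigma}_{Y}$ fails to be a log elementary graph of smooth type, by Corollary \ref{SMOOTH} and Theorem \ref{SMOOTHLOG}; and the embedding of the divisors into $\mathbb{Q}_{+}$, the integer $\tau(\mathcal{R}_{Y})=\tau(\mathcal{R}^{\Sigma}_{Y})$, and the collection of branches agree for both graphs (only the vertex $\widetilde\Sigma$, sent to $0$, has been adjoined), so clause $(c)$ of Definition \ref{LOGRAPH} is literally the same assertion in the two settings. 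One small computational point remains, namely that the root of $\mathcal{R}_{Y}$ is not a $(-1)$-vertex once $Y$ has a singular branch; this follows because the first blow up makes some branch either tangent to the exceptional divisor or meeting it at the same point as another branch, forcing a further blow up centred on it, and it is also visible from the formula for $\omega_{E_{\alpha}}$ at the root in Lemma \ref{WEIGTH}.

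I expect the real obstacle to be bookkeeping rather than a new idea: one must check carefully that replacing the arbitrary transversal $\Sigma$ by the specific coordinate system and auxiliary curve $C$ of Theorem \ref{LINEARLOG} loses no generality, and that the clauses of Definition \ref{LOGRAPH} — which in the non-logarithmic case refer to $\Sigma$ only through the construction of $\mathcal{R}^{\Sigma}_{Y}$ above — are matched up correctly with their logarithmic counterparts. Once the dictionary $\mathcal{R}_{Y}\leftrightarrow\mathcal{R}^{\Sigma}_{Y}$ is firmly in place, the corollary is a one-line consequence of Theorem \ref{LINEARLOG}, entirely parallel to the deduction of Corollary \ref{SMOOTH} from Theorem \ref{SMOOTHLOG}.
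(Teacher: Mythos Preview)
Your proposal is correct and matches the paper's intended approach: the paper states Corollary \ref{LINEAR} without proof, leaving it as an immediate consequence of Theorem \ref{LINEARLOG} via the same reduction used for Corollary \ref{SMOOTH} (choose $\Sigma$ transversal to $Y$ and pass to the log resolution graph). Your additional bookkeeping, in particular the observation that transversality of $\Sigma$ to $Y$ collapses condition $(d)$ of Theorem \ref{LINEARLOG} to ``$Y$ has a singular branch'', is exactly the point one must check and is more detailed than anything the paper writes out.
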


\section{Decomposition of a Resolution Graph}\label{DECOM}

\begin{definition}\label{GLUE}
Let $\mathcal G$ be an elementary graph, $E$ a vertex of $\mathcal G$ and $Z$ a branch that intersects $E$. 
Let $\mathcal H$ be a log elementary graph with root $\Sigma$. We glue $\mathcal G$
with $\mathcal H$ at $Z$ withdrawing $Z$, identifying $\Sigma$ with $E$ and replacing $\omega_E$ by $\omega_E+\omega_{\Sigma}+1$. 
The gluing of $\mathcal G$ with $\mathcal H$ at $Z$ is \em admissible \em  if $E$ is not a black root and one of the following conditions is verified:
\begin{enumerate}
\item[(a)]
$E$ and $\Sigma$ have different colors.
\item[(b)]
$E$ is black and the number of divisors connected to $E$ plus the number of branches connected to $E$ is greater than $2$.
\end{enumerate}
Let $\Lambda$ be a rooted tree with root $\phi$. Let $(\mathcal D_{u})_{u\in\Lambda}$ be a family of elementary graphs. Assume all log elementary with the possible exception of $\mathcal D_{\phi}$. Associate to each vertex $u$ of $\Lambda$, $u\not=\phi$, its father $f(u)$, a divisor $E_u$ of $\mathcal D_{f(u)}$ and a branch $Y_u$ that intersects $E_u$. Assume that the map $u\mapsto Y_u$ is injective.
We call \em gluing of \em $(\mathcal D_u)_{u\in \Lambda}$ to the graph we obtain gluing $\mathcal D_{f(u)}$ with $\mathcal D_u$ at $Y_u$ for each $u\in\Lambda\setminus\{\phi\}$.

\noindent
We say that a resolution graph \em admits a decomposition
into elementary graphs \em if it is a gluing of elementary graphs.
A decomposition into elementary graphs of a resolution graph is called \em admissible \em if, the gluing of $\mathcal D_{f(u)}$ with $\mathcal D_u$ at $Y_u$ is admissible  for each $u\in \Lambda\setminus \{ \phi \}$.
\end{definition}

\begin{theorem}\label{GLUEING}
A gluing of elementary graphs is a resolution graph.
\end{theorem}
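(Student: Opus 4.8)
The statement to prove is that any gluing of elementary graphs is a resolution graph. By Theorem \ref{CONTR}, it suffices to show that a gluing is \emph{contractible}, i.e.\ that there is a sequence of blow downs reducing it to a plane curve (the empty graph plus one vertex). The plan is to work by induction on the rooted tree $\Lambda$ that indexes the family $(\mathcal D_u)_{u\in\Lambda}$, contracting the graphs from the leaves toward the root $\phi$. The key observation is that gluing $\mathcal D_{f(u)}$ with $\mathcal D_u$ at $Y_u$ is, combinatorially, the operation of running the resolution process of the log elementary graph $\mathcal D_u$ starting from the divisor $E_u$ of $\mathcal D_{f(u)}$ in place of its root $\Sigma$ --- precisely the reason the weight correction $\omega_E \mapsto \omega_E + \omega_\Sigma + 1$ appears in Definition \ref{GLUE}. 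So the real content is a \emph{local} statement: if $E$ is a divisor of a contractible normal crossings graph $\mathcal G$ and $Z$ is a branch through $E$, and $\mathcal H$ is a log elementary graph with root $\Sigma$, then the graph obtained by gluing $\mathcal H$ onto $\mathcal G$ at $Z$ is again contractible.

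First I would set up the induction: pick a leaf $u$ of $\Lambda$, so $\mathcal D_u$ is a log elementary graph (of smooth or cusp type) glued onto $\mathcal D_{f(u)}$ at the branch $Y_u$. By the characterizations in Theorems \ref{SMOOTHLOG} and \ref{LINEARLOG} (together with Definition \ref{GLUE}'s description of log elementary graphs via blow downs into a plane curve), $\mathcal D_u$ with its root $\Sigma$ adjoined as a branch is the resolution graph of a plane curve, and moreover the blow down of $\mathcal D_u$ into that plane curve blows down only divisors \emph{other than} $\Sigma$ until the last step, leaving $\Sigma$ until the end --- this is exactly conditions $(iv)$ and $(v)$ in the definition of a log resolution graph. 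Reversing this, I would contract $\mathcal D_u$ within the glued graph: every divisor of $\mathcal D_u\setminus\{\Sigma\}$ gets blown down in the appropriate order, and at each stage the normal crossings condition is preserved because these blow downs, read upward, are genuine blow ups of a resolution process. The weight bookkeeping is the routine part: the identification of $\Sigma$ with $E_u$ and the replacement $\omega_{E_u}\mapsto\omega_{E_u}+\omega_\Sigma+1$ is precisely what makes the self-intersection numbers match after the $\mathcal D_u$-part is stripped off, using the rules in Definitions \ref{BU}, \ref{BD} and Lemma \ref{UPDOWNLEMMMA}. After this contraction the glued graph has been reduced to $\mathcal D_{f(u)}$ with the branch $Y_u$ restored (now carrying the Puiseux data it had before gluing, by Corollaries \ref{BLOWDINV1} and \ref{BLOWDINV2}), and $\Lambda$ has lost a leaf.

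Iterating, after finitely many steps $\Lambda$ is reduced to its root $\phi$ and we are left with the single elementary graph $\mathcal D_\phi$, which is a resolution graph by definition, hence contractible. Composing all the blow downs gives a sequence taking the original gluing to a plane curve, so by Theorem \ref{CONTR} the gluing is a resolution graph.

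The main obstacle is verifying that the normal crossings property and the self-intersection weights are genuinely respected \emph{at the interface} $E_u=\Sigma$ throughout the contraction of $\mathcal D_u$ --- in particular that $E_u$ never needs to be blown down before $\mathcal D_u$ is fully absorbed, and that its corrected weight together with the weights inherited from $\mathcal D_{f(u)}$ assembles into a consistent resolution step in the sense of Definition \ref{STEPDEF}. This is where the admissibility-type hypotheses on colors and valence in Definition \ref{GLUE} do their work (preventing, e.g., a $-1$ divisor from being forced at the seam), and where one must check that the contact orders $o(i,j)$ at the point $\sigma$ where $Y_u$ meets $E_u$ behave correctly under the blow downs, invoking Theorem \ref{BLOWINV2} and its corollary. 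Everything else --- the induction on $\Lambda$, the leaf-to-root ordering, the final appeal to Theorem \ref{CONTR} --- is formal.
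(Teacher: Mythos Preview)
Your induction on $\Lambda$ matches the paper's, but you run it in the dual direction via Theorem \ref{CONTR}: where the paper builds \emph{up} --- by the induction hypothesis the gluing over $\Lambda_0=\Lambda\setminus\{u\}$ is the resolution graph of some plane curve $H$, and one then geometrically replaces the branch $\widetilde H_u$ on the resolved surface by a germ $W$ with $\mathcal R^{\Sigma}_{W}=\mathcal D_u$ (supplied by Theorems \ref{SMOOTHLOG} and \ref{LINEARLOG}), pushes down by $\pi$, and checks that the resulting curve $Y=\pi((\widetilde H\setminus\widetilde H_u)\cup W)$ has the full gluing as its resolution graph --- you propose to contract \emph{down}, absorbing $\mathcal D_u\setminus\{\Sigma\}$ first. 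The geometric route makes the seam identity $\om_{\widetilde E}=\om_{E_u}-\kappa$ immediate from the behaviour of self-intersection under blow-up; your combinatorial route needs the blow-down bookkeeping you call ``routine,'' which is doable but less clean than you suggest, since the blow-down of Definition \ref{BD} removes \emph{all} $(-1)$-divisors simultaneously and does not let you isolate $\mathcal D_u$ from the rest of the tree.

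Two slips to flag. After contracting $\mathcal D_u$ you recover the gluing over all of $\Lambda_0$, not just the single graph $\mathcal D_{f(u)}$; and at the old location of $Y_u$ sits the (generally singular, multi-branched) image of $W$, not a restored smooth transversal arc --- harmless for contractibility, which depends only on the weighted tree, but your description of the intermediate state is incorrect. More importantly, admissibility is \emph{not} a hypothesis of Theorem \ref{GLUEING}: the statement covers \emph{any} gluing, and admissibility enters only in Theorem \ref{LOGDECOMPOSITION} to secure uniqueness of the decomposition. What actually controls the seam is the weight formula alone: $\om_\Sigma=-c-1$ with $c\ge 1$ forces glued weight $\om_{E_u}+\om_\Sigma+1\le\om_{E_u}-1\le -2$, and the $c$ blow-downs adjacent to $E_u$ on the $\mathcal D_u$ side contribute exactly $+c$ back, so no colour or valence condition from Definition \ref{GLUE} is invoked here.
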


\begin{proof}
Let us prove the theorem by induction on the number of vertices of $\Lambda$.
The theorem holds if  $\Lambda=\{\phi\}$.
Let $u$ be a terminal vertex of $\Lambda$, $u\not=\phi$. Set $\Lambda_{0}=\Lambda\setminus \{u\}$ and let $o$ be the intersection point of $Y_u$ and $E_u$. 
By the induction hypothesis, the gluing of $(\mathcal D_v)_{v\in \Lambda_{0}}$ 
is the resolution graph of a plane curve $H$. 
Let $\pi:\widetilde{X}\rightarrow (X,\sigma)$ be the resolution of $H$. 
Set ${D}=\pi^{-1}(\sigma)$. Let $\widetilde{H}$ be the strict transform of $H$ by $\pi$.  
Let $\widetilde{H}_u$ be the germ $\widetilde{H}$ at $o$. 
By Theorems \ref{LINEARLOG}, \ref{SMOOTHLOG} there is a germ of a logarithmic curve $(W,\Sigma)$ with resolution graph $\mathcal D_{u}$.
We can assume that $\Sigma$ is the germ of $E_u$ at $o$.
Set $Y=\pi((\widetilde{H}\setminus H_u)\cup W)$.
The resolution graph of $Y$ is obtained gluing the resolution graph of $H$ and the resolution graph of 
$(W,\Sigma)$ through the identification of $\Sigma$ with a germ of $E_u$.
Let $\widetilde{E}$ be the strict transform of $E_u$ 
by the sequence of blow ups that desingularizes $(\widetilde{H}\setminus H_u)\cup W$.
If $\kappa$ denotes the number of blow ups necessary to separate $W$ and $\Sigma$, 
$\om_{\widetilde E}$ equals $\omega_{E_{u}}-\kappa$.
\end{proof}

\noindent 
We call \em leading term \em of a Puiseux series $\sum_{\alpha}a_{\alpha}x^{\alpha}$ to
$a_{\beta} x^{\beta}$, where $\beta$ is the smallest positive rational $\gamma\in \mathbb Q$ such that $a_{\gamma}\neq 0$. If $Z=\{y=\sum_\al a_\al x^\al \}$ is a germ of plane curve, set $Z^{[\ell]}=\{y=\sum_{\al \leq\ell}a_\al x^j\}$. 

\begin{lemma}\label{LEMAPRINCIPAL}
Let $(Y,\Sigma)$ be  a logarithmic plane curve, with $\Sigma$ smooth.
There is a curve $Y^{\phi}$ such that:
\begin{enumerate}
\item[(a)]  the Puiseux expansion of each irreducible component $Y^{\phi}_i$ of $Y^{\phi}$ is a truncation of the Puiseux expansion of some irreducible component $Y_i$ of $Y$;

\item[(b)]  the resolution graph of  $(Y^{\phi},\Sigma)$ is a log elementary graph;

\item[(c)]  the sequence of blow ups that desingularize $(Y,\Sigma)$ and $(Y^\phi,\Sigma)$ coincide up to the step when the inverse image of $Y^\phi\cup\Sigma$ is normal crossings; 

\item[(d)]  
let $\pi_{\phi}:X^\phi\to X$ \em [$\pi:\widetilde{X}\to X$] \em be the sequence of blow ups that desingularizes $(Y^\phi,\Sigma)$ \em [$(Y,\Sigma)$]. \em Let $\widetilde Y$ be the strict transform of $Y$ by $\pi_{\phi}$.
Set $D^\phi=\pi^{-1}_{\phi}(\Sigma\cap Y)$. 
Let $\sigma$ be a point of $D^{\phi}$ such that $D^{\phi}\cup \widetilde{Y}$ is not normal crossings at $\si$.
Let $D^\phi_\si$ \em [$\widetilde{Y}_\si$] \em be the germ of $D^\phi$ \em [$\widetilde{Y}]$ \em at $\sigma$. 
Then there is a truncation $\widetilde{Y}^{\phi}_\si$ of $\widetilde{Y}_\si$ such that the resolution graph of $(\widetilde{Y}^{\phi}_\si,D^\phi_\si)$ is an elementary graph and the gluing of $\mathcal R^{\Sigma}_{Y^\phi}$ with 
$\mathcal R^{D^\phi_\si}_{\widetilde{Y}^{\phi}_\si}$ at $\widetilde Y^\phi_\si$ is admissible.
 \em
\end{enumerate}
\end{lemma}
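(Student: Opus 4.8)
The plan is to produce $Y^\phi$ as a curve realizing the \emph{root} elementary graph of the canonical decomposition of $\mathcal R^\Sigma_Y$, obtained by truncating the Puiseux expansions of the branches of $Y$ at the level at which they leave this root piece. Fix local coordinates $(x,y)$ with $\Sigma=\{x=0\}$ and let $Y_1,\dots,Y_r$ be the branches of $Y$. If every $Y_i$ is smooth and transversal to $\Sigma$, I would take $Y^\phi=Y$: then $(b)$ is Theorem \ref{SMOOTHLOG}, $(a)$ and $(c)$ are trivial, and $(d)$ is vacuous since $\pi_\phi=\pi$ and $D^\phi\cup\widetilde{Y}$ is already normal crossings. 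Otherwise $Y$ has a singular branch or a branch tangent to $\Sigma$, and the construction splits into two cases according to whether the initial part of the resolution is linear (cusp root) or not (smooth root); this dichotomy, and the exact truncation levels, are read off the branches by iterating the blow-up formulas of Theorems \ref{BLOWINV1} and \ref{BLOWINV2}.

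In the smooth case I would set $Y^\phi=\bigcup_iY^\phi_i$, where $Y^\phi_i$ is the shortest truncation of the expansion of $Y_i$ that is smooth, transversal to $\Sigma$, and still distinct from $Y^\phi_j$ whenever $Y_i$ and $Y_j$ separate inside the root piece (so two branches of $Y$ that part company only in a later piece yield a single branch of $Y^\phi$); by Corollary \ref{SMOOTH} and Theorem \ref{SMOOTHLOG}, $\mathcal R^\Sigma_{Y^\phi}$ is then a log elementary graph of smooth type. In the cusp case I would first pick an auxiliary smooth curve $C$ transversal to $\Sigma$ of maximal contact with the singular branches, and replace $C$ by a branch $Y_i$ whenever $o(Y_i,C)>\tau_C(Y)$ (giving that branch exponent $+\infty$), as in the discussion preceding Definition \ref{LOGRAPH}; then I would let $Y^\phi$ be the union of the truncations of the $Y_i$ to cusps relative to $(\Sigma,C)$ of exponent at most $\tau_C(Y)$, keeping only enough branches to build the linear graph and, for any branch whose exponent would otherwise be an integer, truncating exactly to the integer $\tau_C(Y)$ so that condition $(c)$ of Definition \ref{LOGRAPH} holds. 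By Corollary \ref{LINEAR} (resp. Theorem \ref{LINEARLOG}), $\mathcal R^\Sigma_{Y^\phi}$ is a log elementary graph of cusp type. In both cases $(a)$ holds by construction and $(b)$ by the cited classification.

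Property $(c)$ is the easy one: since each $Y^\phi_i$ is a truncation of some $Y_i$, the two curves give identical data at every resolution step up to the truncation exponent, so by Lemma \ref{UPDOWNLEMMMA} (equivalently, directly from Paragraph \ref{PARAGRAFO}) the blow-up centres for $(Y,\Sigma)$ and $(Y^\phi,\Sigma)$ coincide until $D^\phi\cup Y^{(\bullet)}$ becomes normal crossings, which is exactly the moment $\mathcal R^\Sigma_{Y^\phi}$ is formed. For $(d)$, let $\sigma$ be a point of $D^\phi$ at which $D^\phi\cup\widetilde{Y}$ fails to be normal crossings. I would first check, as in the induction in the proof of Theorem \ref{SMOOTHLOG}, that $D^\phi_\sigma$ is a single smooth divisor through $\sigma$, that a single branch of $\widetilde{Y^\phi}$ passes through $\sigma$, and that $\widetilde{Y}_\sigma$ has no component in $D^\phi_\sigma$; hence $(\widetilde{Y}_\sigma,D^\phi_\sigma)$ is again a logarithmic plane curve with smooth root, to which the construction above applies and yields a truncation $\widetilde{Y}^\phi_\sigma$ of $\widetilde{Y}_\sigma$ with $\mathcal R^{D^\phi_\sigma}_{\widetilde{Y}^\phi_\sigma}$ elementary, its root being the germ at $\sigma$ of the vertex $E$ of $\mathcal R^\Sigma_{Y^\phi}$ containing $\sigma$. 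By Theorems \ref{SMOOTHLOG} and \ref{LINEARLOG} (cf. the proof of Theorem \ref{GLUEING}), gluing $\mathcal R^\Sigma_{Y^\phi}$ with $\mathcal R^{D^\phi_\sigma}_{\widetilde{Y}^\phi_\sigma}$ at that branch realizes the resolution graph of the curve obtained by replacing $\widetilde{Y}$ near $\sigma$ by $\widetilde{Y}^\phi_\sigma$, hence it is a resolution graph.

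The genuinely delicate point, which I expect to be the main obstacle, is the \emph{admissibility} of that last gluing in the sense of Definition \ref{GLUE}: one must show $E$ is not a black root and that either $E$ and the root of $\mathcal R^{D^\phi_\sigma}_{\widetilde{Y}^\phi_\sigma}$ carry different colours, or $E$ is black with more than two incident divisors-and-branches. This forces the truncation threshold defining $Y^\phi$ to be tuned precisely: by maximality of the root elementary piece a smooth-type piece cannot be followed by another smooth-type piece (they would merge), which gives the colour change; and the only way a cusp-type piece can be followed by a cusp-type piece is at a vertex of valence-plus-branch-count at least three — which is exactly what condition $(c)$ of Definition \ref{LOGRAPH}, together with the matching of the ``smooth'' and ``cusp'' truncation levels, is engineered to guarantee. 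Carrying this out amounts to a careful bookkeeping of rectified weights and valences at $E$ along the blow-ups leading to $\sigma$, using Corollaries \ref{BLOWDINV1} and \ref{BLOWDINV2} and Theorems \ref{BLOWINV1} and \ref{BLOWINV2}; together with $(a)$–$(c)$ this proves the lemma and supplies the inductive step for the decomposition of $\mathcal R^\Sigma_Y$ in Section \ref{DECOM}.
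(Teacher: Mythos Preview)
Your overall architecture matches the paper's: define $Y^\phi$ by truncating the branches of $Y$ so that $(Y^\phi,\Sigma)$ realizes the root elementary piece, then recurse at the non--normal-crossings points of $D^\phi$. Parts $(a)$ and $(c)$ are handled the same way in both. The differences, and the places where your sketch has real gaps, are in the precise truncation rule and in the admissibility argument for $(d)$.

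First, in the smooth case your phrase ``shortest truncation of $Y_i$ that is smooth, transversal to $\Sigma$, and still distinct from $Y^\phi_j$ whenever $\dots$'' does not pin down $Y^\phi_i$. The paper's construction takes the \emph{longest} such truncation: it sets $q_i$ to be the supremum of the integers $\ell$ with $Y_i^{[\ell]}$ smooth and $o(Y_i^{[\ell]},Y_j^{[\ell]})\in\mathbb Z\cup\{+\infty\}$ for every $j$, and puts $Y^\phi_i=Y_i^{[q_i]}$. In the cusp case the paper likewise gives an explicit recipe: it fixes a curve $C$ of maximal contact, defines an integer $\tau$ from the first Puiseux exponents, and takes $Y^\phi_i$ to be the leading monomial of $Y_i$ (respectively $C$ itself when $o(Y_i,C)>\tau$). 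These exact choices are not cosmetic; they are what makes the admissibility step go through.

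Second, for $(d)$ the paper does not argue via ``maximality'' or by allowing black--black gluings at high-valence vertices. Its point is that the two constructions feed into each other: if $Y^\phi$ is built by the smooth rule, then at every bad point $\sigma$ some $Y_i^{[q_i+1]}$ is singular, so $\widetilde Y_\sigma$ has a singular branch or a branch tangent to $D^\phi_\sigma$ and therefore falls under the \emph{cusp} rule; conversely, if $Y^\phi$ is built by the cusp rule, then the only point one must examine is $\widetilde C\cap D^\phi$, and there the definitions of $\rho$ and $\tau$ force $\min\{o(Y_i,Y_j):Y^\phi_i=Y^\phi_j=C\}$ to be an integer, so $\widetilde Y_\sigma$ falls under the \emph{smooth} rule. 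Thus consecutive pieces always have different colours and admissibility holds by clause~(a) of Definition~\ref{GLUE}. Your sketch identifies admissibility as the delicate point but does not supply this alternation mechanism, which is the actual content of the lemma.
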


\begin{proof}
Set $J=\{i\in I: Y_i$ is transversal to $\Sigma\}$ and $K=I\setminus J$. 
For $i\in J$, set $\lambda_i$ as the first Puiseux exponent of $Y_i$, if $Y_i$ is singular, and $\lambda_i=+\infty$ otherwise.
If $Y_i$ is smooth for each $i\in J$, let $(x,y)$ be a system of local coordinates such that $\Sigma=\{x=0\}$.
Assume that there is $i\in J$ such that $Y_i$ is singular.
Let $\rho$ be the supremum of the set of integers $\ell$ such that there is a smooth curve $C$ that verifies, for each $i\in J$, $o(Y_i,C)\ge \ell$ or $o(Y_i,C)=\lambda_i$. Choose a smooth curve $C$ such that $C$ is transversal to $\Sigma$ and, for each $i\in J$, $o(Y_i,C)\ge \rho$ or $o(Y_i,C)=\lambda_i$.
Let $(x,y)$ be a system of local coordinates such that $\Sigma=\{x=0\}$ and $C=\{y=0\}$.
Let $\tau_0$ be the supremum  of  $\lambda_i$ such that $i\in J$, $Y_i$ is singular and $\lambda_i\le\rho$. 
Let $\tau$ be the smallest positive integer bigger than $\tau_0$. 

\noindent
$(A)$ \em Assume $K=\emptyset$ and $\tau=1$. \em  Let $q_i$ be the supremum of the set of integers $\ell$ such that $Y^{[\ell]}_{i}$ is smooth and $o(Y^{[\ell]}_{i},Y^{[\ell]}_{j})\in \mathbb Z\cup \{+\infty\}$ for each $j\in I$. Set $Y^{\phi}_{i}=Y^{[q_{i}]}_i$ for each $i\in I$. Set $Y^{\phi}=\cup_{i\in I}Y^{\phi}_{i}$.

\noindent
$(B)$ \em Assume $K\neq \emptyset$ or $\tau\geq 2$. \em 
If $i\in K$ and $Y_i=\{x=\psi_{i}(y)\}$, set $Y^{\phi}_{i}=\{x=\widetilde{\psi}_{i}(y)\}$, where  
$\widetilde{\psi}_i$ is the leading term of $\psi_{i}$. 
Assume $i\in J$ and  $Y_i=\{y=\varphi_{i}(x)\}$. If $o(Y_i,C)>\tau$, set $Y^{\phi}_{i}=C$. Otherwise, set $Y^{\phi}_{i}=\{y=\widetilde{\varphi}_{i}(x)\}$, where  
$\widetilde{\varphi}_i$ is the leading term of $\varphi_{i}$. Set $Y^{\phi}=\cup_{i\in I}Y^{\phi}_{i}$.

\noindent
By construction, $(Y^{\phi},\Sigma)$ verifies condition $(a)$. In case $(A)$ [$(B)$], $(Y^{\phi},\Sigma)$ verifies the conditions of Theorem \ref{SMOOTHLOG}  [Theorem  \ref{LINEARLOG} ], hence statement $(b)$ holds.

\noindent 
Statement $(c)$ follows from Theorems \ref{BLOWINV1} and \ref{BLOWINV2}.

\noindent
Let us show that statement $(d)$ holds.

\noindent
\em Assume $(A)$. \em Let $\widetilde{Y_i}$ $[\widetilde{Y^{\phi}_{i}}]$ be the strict transform of $Y_i$ $[Y^{\phi}_{i}]$ by $\pi_{\phi}$. Choose $\sigma$ such that $D^{\phi}\cap \widetilde{Y}$ is not normal crossings at $\sigma$. By the definition of the $q_i$'s, there is $i\in I$ such that $\sigma\in \widetilde{Y_{i}}$, $Y^{[q_i+1]}_i$ is singular and $\widetilde{Y_{i}}$ is either singular or tangent to $D^{\phi}$ at $\sigma$. Hence $\widetilde{Y^{\phi}_{\sigma}}$ verifies condition $(B)$.

\noindent
\em Assume $(B)$. \em Let $\widetilde{C}$ be the strict transform of $C$ by $\pi_{\phi}$. By the definition of admissibility, we only need to check what happens at the point $\sigma$ such that $\widetilde{C}\cap D^{\phi}=\{\sigma\}$. Set $J_\phi=\{ i\in J: Y^\phi_i=C\}$, $Y_\phi=\cup_{i\in J_\phi}Y_i$.
Notice that the germ at $\sigma$ of the strict transform of $Y$ by $\pi_{\phi}$  equals  the strict transform of $Y_\phi$ by $\pi_\phi$. If $J_\phi=\emptyset$ or $Y_\phi$ is smooth, nothing is glued to $\mathcal R^{\Sigma}_{Y^\phi}$ at $\sigma$ and there is nothing to prove. Assume $Y_\phi$ is singular. By the construction of $Y^\phi$, in particular the definitions of $\rho$ and $\tau$, the minimum of the set $\{ o(Y_i,Y_j):~ i,j\in J_\phi\}$ is an integer.
Hence $\widetilde{Y}^{\phi}_{\sigma}$ verifies condition $(A)$.
\end{proof}

\begin{theorem}\label{LOGDECOMPOSITION}
The resolution graph of a logarithmic plane curve $(Y,\Sigma)$, with $\Sigma$ smooth, admits one and only one admissible decomposition into elementary graphs.
\end{theorem}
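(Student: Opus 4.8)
The plan is to prove existence of a decomposition by induction on the depth $N$ of $\mathcal R^{\Sigma}_Y$, using Lemma \ref{LEMAPRINCIPAL} as the inductive step, and then prove uniqueness separately by analyzing what the root block $\mathcal D_\phi$ and the gluing data are forced to be. For existence, apply Lemma \ref{LEMAPRINCIPAL} to obtain the curve $Y^\phi$ and the log elementary graph $\mathcal R^{\Sigma}_{Y^\phi}$; this will be the root block $\mathcal D_\phi$ of the decomposition. Part (c) of the lemma says that the resolutions of $(Y,\Sigma)$ and $(Y^\phi,\Sigma)$ agree up to the moment $X^\phi$ where $Y^\phi\cup\Sigma$ becomes normal crossings, and part (d) says that at each bad point $\sigma$ of $D^\phi\cup\widetilde Y$ there is a truncation $\widetilde Y^\phi_\sigma$ of the germ $\widetilde Y_\sigma$ whose log resolution graph $\mathcal R^{D^\phi_\sigma}_{\widetilde Y^\phi_\sigma}$ is elementary and glues admissibly to $\mathcal D_\phi$. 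Crucially, the full resolution graph $\mathcal R^{\Sigma}_Y$ is then obtained by gluing $\mathcal R^{\Sigma}_{Y^\phi}$ with the graphs $\mathcal R^{D^\phi_\sigma}_{\widetilde Y_\sigma}$ (the \emph{untruncated} local pieces) at the corresponding branches; and each $(\widetilde Y_\sigma, D^\phi_\sigma)$ is again a logarithmic plane curve with $D^\phi_\sigma$ smooth whose resolution graph has \emph{strictly smaller depth} than $N$. So I would invoke the inductive hypothesis on each $(\widetilde Y_\sigma, D^\phi_\sigma)$ to decompose it admissibly with root block $\mathcal R^{D^\phi_\sigma}_{\widetilde Y^\phi_\sigma}$, and then assemble the rooted tree $\Lambda$ by attaching, below $\phi$, the trees produced for each $\sigma$. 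Admissibility of the first-level gluings is exactly part (d); admissibility of deeper gluings comes from the inductive hypothesis. That the total gluing reproduces $\mathcal R^{\Sigma}_Y$ follows from Lemma \ref{UPDOWNLEMMMA} together with part (c): blowing down the local blocks reconstructs $\widetilde Y\cup D^\phi$, and then blowing down $D^\phi$ down to $\Sigma$ reconstructs $Y\cup\Sigma$, so the glued graph has the right blow-down, hence (by Theorem \ref{CONTR} and minimality) is $\mathcal R^{\Sigma}_Y$.

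For uniqueness, suppose $\mathcal R^{\Sigma}_Y$ is written as a gluing of elementary graphs over some rooted tree $\Lambda'$ with root block $\mathcal D'_{\phi}$. The point is that the root block is forced: $\mathcal D'_\phi$ must contain the root $\widetilde\Sigma$, and I would argue that the subgraph of $\mathcal R^{\Sigma}_Y$ spanned by $\mathcal D'_\phi$ is exactly the part of the resolution that is "visible" before any branch of $Y$ acquires a higher Puiseux datum — which is precisely what $\mathcal R^{\Sigma}_{Y^\phi}$ records. Here conditions (a)--(c) of Definitions \ref{LOGRAPH} and of the smooth-type definition, together with the admissibility constraint in Definition \ref{GLUE} (the root of a glued piece cannot be a black root, and black/black gluings are forbidden unless the valence is $\ge 3$), pin down where one elementary block must end and the next must begin: a smooth-type block is maximal among consecutive rectified-weight-$0$ divisors off the root, and a cusp-type block must terminate exactly at the divisor $E$ with $\alpha^E=\tau(\mathcal R)$ by condition (c) of Definition \ref{LOGRAPH}. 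So $\mathcal D'_\phi=\mathcal R^{\Sigma}_{Y^\phi}$, the gluing points are forced to be the bad points $\sigma$ of $D^\phi\cup\widetilde Y$, and each child subtree of $\Lambda'$ is an admissible decomposition of $(\widetilde Y_\sigma, D^\phi_\sigma)$; uniqueness then follows by the inductive hypothesis applied to each child.

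I expect the main obstacle to be the uniqueness half, and within it the verification that the root block is genuinely forced to coincide with $\mathcal R^{\Sigma}_{Y^\phi}$. One has to rule out both "stopping too early" (a candidate root block that is a proper sub-block) and "stopping too late" (absorbing into $\mathcal D'_\phi$ a divisor that the admissibility rules require to start a new block). The delicate case is the cusp type: condition (c) of Definition \ref{LOGRAPH} says an integer divisor meeting a branch must sit at level $\tau(\mathcal R^{\Sigma})$, and one must check that this, combined with the ban on black-on-black gluing at a divisor of valence-plus-branch-count $\le 2$ and the ban on gluing onto a black root, leaves exactly one legal breakpoint — this is the content of the parenthetical remark after Definition \ref{LOGRAPH}. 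The bookkeeping of self-intersection weights under gluing (Definition \ref{GLUE} replaces $\omega_E$ by $\omega_E+\omega_\Sigma+1$, matching the $-\kappa$ correction in the proof of Theorem \ref{GLUEING}) also has to be tracked so that the reassembled weights agree with those of $\mathcal R^{\Sigma}_Y$; this is routine but must be done. Everything else — existence, the depth induction, the reduction to local germs — is a fairly direct orchestration of Lemma \ref{LEMAPRINCIPAL}, Lemma \ref{UPDOWNLEMMMA}, and Theorem \ref{GLUEING}.
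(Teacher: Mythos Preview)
Your proposal is correct and follows essentially the same route as the paper: build the root block via Lemma~\ref{LEMAPRINCIPAL}, recurse on the local germs $(\widetilde Y_\sigma, D^\phi_\sigma)$, and for uniqueness show that any admissible decomposition must have the same root block $\mathcal D_\phi=\mathcal R^{\Sigma}_{Y^\phi}$, then recurse. The only noteworthy difference is your choice of induction parameter: you induct on the depth $N$ of $\mathcal R^\Sigma_Y$, whereas the paper inducts on the pair $(h_Y,\#_Y)$, where $h_Y$ is the maximal number of Puiseux pairs (plus $1/2$ for branches tangent to $\Sigma$) and $\#_Y$ is the number of branches. Both work; your depth argument is valid because the root block $\mathcal R^\Sigma_{Y^\phi}$ always contains at least one exceptional divisor, so every bad point $\sigma$ lies on a divisor of depth $\ge 1$, forcing the child graphs to have strictly smaller depth.

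For uniqueness the paper is somewhat more explicit than your sketch: it runs a case analysis (root blocks of different types; same type but one properly contained in the other) and in each case exhibits a concrete violation of admissibility, namely a forbidden black-on-black gluing or a failure of condition~(c) of Definition~\ref{LOGRAPH}. Your outline names exactly these constraints as the governing ones and correctly flags the cusp-type breakpoint (condition~(c) forcing termination at the divisor with $\alpha^E=\tau$) as the delicate point, so there is no missing idea --- just the case-by-case verification to fill in.
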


\begin{proof}
Let $g_i$ be the number of Puiseux pairs of the branch $Y_i$ of $Y$, $i\in I$. Set $h_i=g_i$ if $Y_i$ is transversal to $\Sigma$. Otherwise, set $h_i=g_i+1/2$.
Set $h_Y=\max\{h_i:i\in I\}$. 
Let $\#_Y$ be the cardinality of $I$.
Let us construct an admissible decomposition into elementary graphs  $(\mathcal D_v)_{v\in\Lambda}$
of $\mathcal R^{\Sigma}_{Y}$ by induction on $h_Y$ and $\#_Y$.

\noindent
Let $Y^\phi$ be the curve constructed in Lemma \ref{LEMAPRINCIPAL}. We will follow the notations of this Lemma.

\noindent
If $\si\in D^{\phi}$ and $D^{\phi}\cup\widetilde{Y}$ is not normal crossings at $\si$, $h_{\widetilde{Y}_{\si}}<h_{Y}$ or $\#_{\widetilde{Y}_{\si}}<\#_{Y}$. Hence $(\widetilde{Y}_{\si},D^{\phi}_{\si})$ admits an admissible decomposition into log elementary graphs. Since $\mathcal R^{\Sigma}_{Y}$ is the gluing of $\mathcal R^{\Sigma}_{Y^{\phi}}$ and the graphs $\mathcal R^{D^\phi_\si}_{\widetilde{Y}^{\phi}_\si}$, we obtain an admissible decomposition into elementary graphs of  $\mathcal R^{\Sigma}_{Y}$.

\noindent
Let $(\mathcal E_v)_{v\in\Lambda'}$ be another admissible decomposition of $\mathcal R^{\Sigma}_{Y}$.
Let us show by induction on $h_Y$ and $\#_Y$ that the two decompositions are equal.
It is enough to show that $\mathcal D_\phi=\mathcal E_\phi$.


\noindent
\em Assume that $\mathcal D_{\phi}$ is of cusp type and $\mathcal E_{\phi}$ is of smooth type. \em Let $(E,\Sigma)$ [$(D,\Sigma)$] be a logarithmic plane curve such that $\mathcal R^{\Sigma}_{E}=\mathcal E_{\phi}$ [$\mathcal R^{\Sigma}_{D}=\mathcal D_{\phi}$]. let $\pi_{E}:X^{E}\to X$ [$\pi_{D}:X^{D}\to X$] be the sequence of blow ups that desingularizes $(E,\Sigma)$ [$(D,\Sigma)$]. There is a subtree $R$ of $(\mathbb Q_{+},\prec)$ such that we can identify the set of vertices of $\mathcal D_{\phi}$ with $\{E_{\alpha}:\alpha\in R\}$. Let $m$ be the biggest integer of $R$. By condition $(c)$ of Definition \ref{LOGRAPH}, the vertices $E_1,\ldots,E_m$ of $\mathcal D_{\phi}$ are also vertices of $\mathcal E_{\phi}$ and the strict transform of $E$ by $\pi_{D}$ intersects $E_m$ at a smooth point of $\pi^{-1}_{D}(\Sigma)$. Hence, The strict transform of $D$ by $\pi_{E}$ intersects $E_m$ at a singular point of $\pi^{-1}_{E}(\Sigma)$. Therefore the decomposition $(\mathcal E_v)_{v\in\Lambda'}$ is not admissible and $\mathcal D_{\phi}$ and $\mathcal E_{\phi}$ must be of the same type. Let $G$ be a vertex of $\mathcal E_{\phi}$ and $F$ a vertex of $\mathcal R^{\Sigma}_{Y}$. Since $\mathcal E_{\phi}$ is a resolution graph,
\begin{equation}\label{TRANSITION}
\textrm{if}\; F\prec G,\; F\; \textrm{is a vertex of}\; \mathcal E_{\phi}.
\end{equation}


\noindent
\em Assume that there is a vertex of $\mathcal D_{\phi}$ that is not a vertex of $\mathcal E_{\phi}$. \em Let $\gamma$ be the biggest element of $R$ such that $E_{\gamma}$ is a vertex of $\mathcal E_{\phi}$. Let $n$ be the smallest integer bigger than or equal to $\gamma$. Then $\gamma=n$. Otherwise $E_{n}\prec E_{\gamma}$ and $\mathcal E_{\phi}$ would not be a resolution graph by (\ref{TRANSITION}). Since $\mathcal D_\phi$ is linear and $\mathcal D_\phi$ properly contains $\mathcal E_\phi$, $E$ has a smooth branch $E'$ such that the strict transform of $E'$ by $\pi_{E}$ intersects $E_n$. Therefore $(D,\Sigma)$ does not verify condition $(c)$ of Definition \ref{LOGRAPH}. Hence the decomposition $(\mathcal E_v)_{v\in\Lambda'}$ is not admissible.

\noindent
If there is a vertex $E$ of $\mathcal E_{\phi}$ that is not a vertex of $\mathcal D_{\phi}$, the same argument shows that $(\mathcal D_v)_{v\in\Lambda}$ is not admissible.

\noindent
\em Assume that $\mathcal D_{\phi}$ is of smooth type. \em Hence $\mathcal E_{\phi}$ is also of smooth type. Assume $\mathcal D_{\phi}\neq \mathcal E_{\phi}$. Hence we can assume that there is a vertex $E$ of  $\mathcal D_{\phi}$ and 
$\mathcal E_{\phi}$ is connected to a vertex $F$ of $\mathcal E_{\phi}$  that is not a vertex of $\mathcal D_{\phi}$. Let $\mathcal F$ be the subtree of $\mathcal E_{\phi}$ with vertices the divisors $D$ of $\mathcal E_{\phi}$ such that $D=F$ or $F\prec D$. Let us reset $\omega_{E}$ in such a way that $\omega_{E}$ plus the set of vertices of $\mathcal F$ connected to $E$ equal $-1$. We obtain in this way a resolution graph $(\mathcal F,E)$. Since $\mathcal E_{\phi}$ is of smooth type, $\mathcal F$ is of smooth type. Hence the gluing of $(\mathcal D_v)_{v\in\Lambda}$ is not admissible.
\end{proof}

\begin{corollary}\label{DECOMPOSITION}
The resolution graph of a plane curve admits one and only one admissible decomposition into elementary graphs.
\end{corollary}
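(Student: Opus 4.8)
The strategy is to reduce the statement to its logarithmic counterpart, Theorem \ref{LOGDECOMPOSITION}, in the same way Corollaries \ref{SMOOTH} and \ref{LINEAR} are deduced from Theorems \ref{SMOOTHLOG} and \ref{LINEARLOG}. First I would fix a smooth curve $\Sigma$ transversal to $Y$ and observe that, because of this transversality, the strict transform of $\Sigma$ becomes disjoint from that of $Y$ after the first blow up and meets the first exceptional divisor transversally at a point that is never blown up again. Hence $\mathcal R^{\Sigma}_{Y}$ is obtained from $\mathcal R_{Y}$ by adjoining a single vertex $\widetilde\Sigma$ of valence $1$, joined by one edge to the root of $\mathcal R_{Y}$ and carrying weight $-2$, and by declaring $\widetilde\Sigma$ to be the new root; so the two graphs coincide away from the root.

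Next I would set up a bijection between admissible decompositions of $\mathcal R_{Y}$ and admissible decompositions of $\mathcal R^{\Sigma}_{Y}$. Given an admissible decomposition $(\mathcal D_{v})_{v\in\Lambda}$ of $\mathcal R^{\Sigma}_{Y}$, its root piece $\mathcal D_{\phi}$ is a log elementary graph with root $\widetilde\Sigma$; I would replace $\mathcal D_{\phi}$ by the graph obtained from it by deleting the vertex $\widetilde\Sigma$. Since $\widetilde\Sigma$ has valence $1$ in $\mathcal D_{\phi}$, this deletion lowers by one the valence, and hence the rectified weight, of the unique divisor met by $\widetilde\Sigma$, which thereby acquires rectified weight $-1$, while all other rectified weights are unchanged; and since $Y$, hence every truncation of a branch of $Y$ and in particular the curve underlying the root piece, is transversal to $\Sigma$, the only degenerate situation in which a log elementary graph of cusp type could fail to yield an elementary graph of cusp type under this deletion — namely a smooth branch tangent to $\Sigma$ being regarded as a cusp — does not occur. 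Corollaries \ref{SMOOTH} and \ref{LINEAR} then guarantee that the resulting graph is again an elementary graph, of the same type as $\mathcal D_{\phi}$. Moreover, no branch and no divisor of any piece $\mathcal D_{u}$ with $u\neq\phi$ is glued at $\widetilde\Sigma$, so the gluing data, and hence its admissibility, are untouched; thus we obtain an admissible decomposition of $\mathcal R_{Y}$. The inverse operation — adjoining a valence-one root $\widetilde\Sigma$ of weight $-2$ to the root of the root piece of an admissible decomposition of $\mathcal R_{Y}$, which by the same two corollaries turns an elementary graph into a log elementary graph of the same type — shows that this correspondence is a bijection.

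With the bijection established, Theorem \ref{LOGDECOMPOSITION} tells us that $\mathcal R^{\Sigma}_{Y}$ admits exactly one admissible decomposition into elementary graphs, whence $\mathcal R_{Y}$ admits exactly one as well. The step I expect to be the main obstacle is precisely this matching-up: showing that deleting, respectively adjoining, $\widetilde\Sigma$ carries elementary graphs to log elementary graphs and back while preserving the type, which amounts to reconciling Corollaries \ref{SMOOTH} and \ref{LINEAR} with Theorems \ref{SMOOTHLOG} and \ref{LINEARLOG} and excluding the boundary case above. The accompanying bookkeeping of weights under the valence-one deletion and the invariance of admissibility under these moves should be routine.
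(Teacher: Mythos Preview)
Your proposal is correct and follows precisely the route the paper intends: the corollary is stated without proof, immediately after Theorem~\ref{LOGDECOMPOSITION}, in exact parallel with the way Corollaries~\ref{SMOOTH} and~\ref{LINEAR} are derived from Theorems~\ref{SMOOTHLOG} and~\ref{LINEARLOG}, and your reduction via a transversal smooth $\Sigma$ (adjoining or deleting a valence-one root of weight~$-2$) is exactly that parallel. The only point worth making explicit in your write-up is that, because the transversality of $\Sigma$ forces $\tau\ge 2$ in the cusp case, no branch of the root piece ever meets $E_1$, so the ``not a black root'' clause in Definition~\ref{GLUE} is automatically respected under the bijection; you allude to this boundary case but do not spell it out.
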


%

\begin{example}\label{EXE3}

Let $Y_i=\{ y=\varphi_i (x)\}$, $i=1,2$, where 
$\varphi_1(x)=x^2+x^{7/2}$ and 
$\varphi_2(x)=x^{7/2}$. We have $Y^{\phi}_{i}=\{y=\varphi^{\phi}_i(x)\}$, $i=1,2$, with $\varphi^{\phi}_1(x)=x^2$ and $\varphi^{\phi}_2(x)=0$. Set $Y=Y_{1}\cup Y_{2}$. See Figure \ref{F3}. The decomposition (C) is not admissible since the resolution graph of $Y_1\cup Y^{\phi}_{2}$ does not verify condition (c) of Definition \ref{LOGRAPH}. 

\begin{figure}[h]
\centering
\subfloat[The graph $\mathcal R_{Y}$]{
\includegraphics{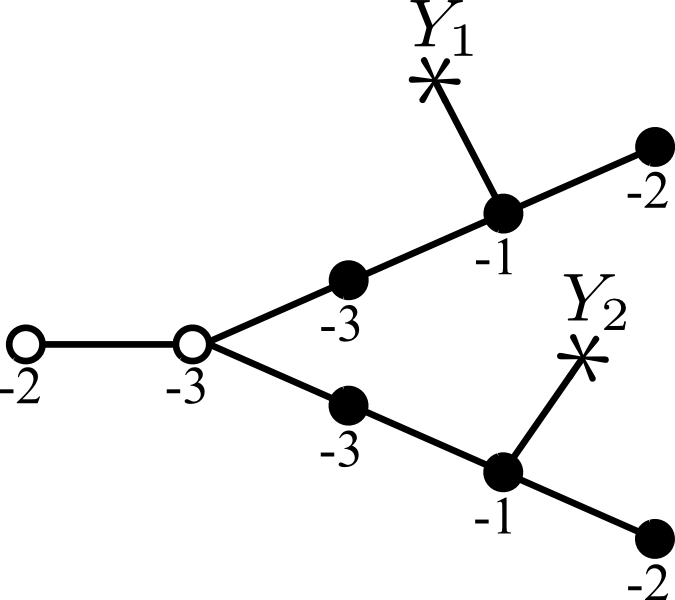}
\label{FIGEXE3}}
\qquad \qquad
\subfloat[The canonical decomposition of $\mathcal R_{Y}$]{
\includegraphics{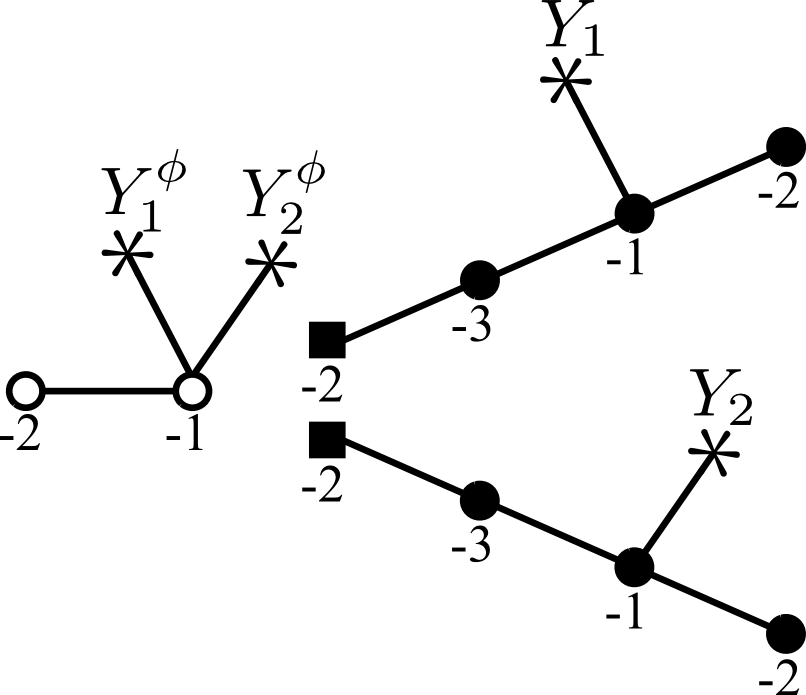}
\label{FIGEXE3D}}\\
\subfloat[Another decomposition of $\mathcal R_{Y}$]{
\includegraphics{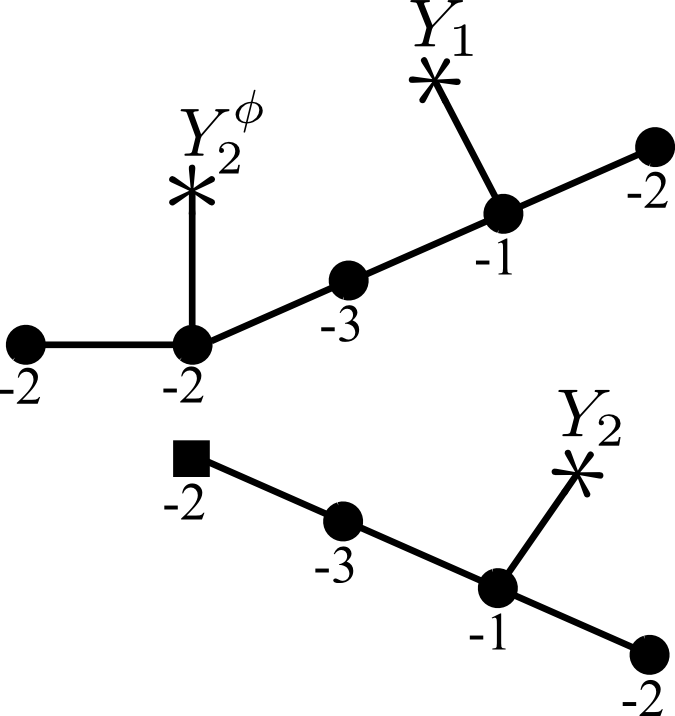}
\label{FIGEXE3A1}}
\qquad \qquad
\caption{Example \ref{EXE3}. \label{F3}}
\end{figure}

\end{example}

\begin{example}\label{EXE4}

Let $Y_i=\{ y=\varphi_i (x)\}$, $1\le i\le 4$, where 
$\varphi_1(x)=x^{3/2}$, 
$\varphi_2(x)=x^{5/2}$,
$\varphi_3(x)=x^{2}+x^{4}$ and 
$\varphi_4(x)=x^{2}-x^{4}$.  We have $Y^{\phi}_{i}=\{y=\varphi^{\phi}_i(x)\}$, $1\geq i\geq 4$, with  $\varphi^{\phi}_1(x)=\varphi_1(x)=x^{3/2}$, $\varphi^{\phi}_2(x)=0$ and $\varphi^{\phi}_3(x)=\varphi^{\phi}_4(x)=x^2$.
Set $Y=\cup_{i=1}^{4}Y_i$. See Figure \ref{F4}. The decomposition (C) is not admissible since the resolution graph of $Y_1\cup Y^{\phi}_{3}\cup Y_2$ does not verify condition (c) of Definition \ref{LOGRAPH}.

\begin{figure}[h]
\centering
\subfloat[$\mathcal R_{Y}$]{
\includegraphics{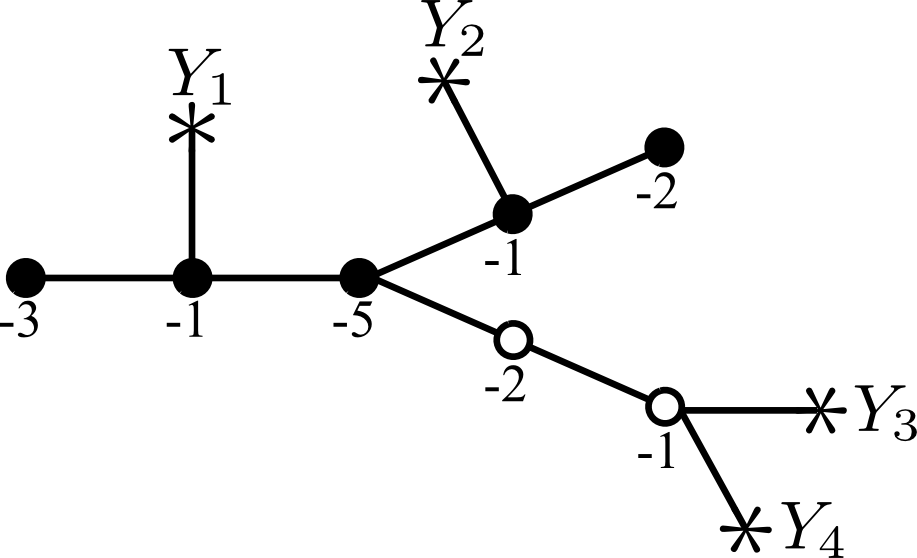}
\label{FIGEXE4}}
\qquad \qquad
\subfloat[The canonical decomposition of $\mathcal R_{Y}$]{
\includegraphics{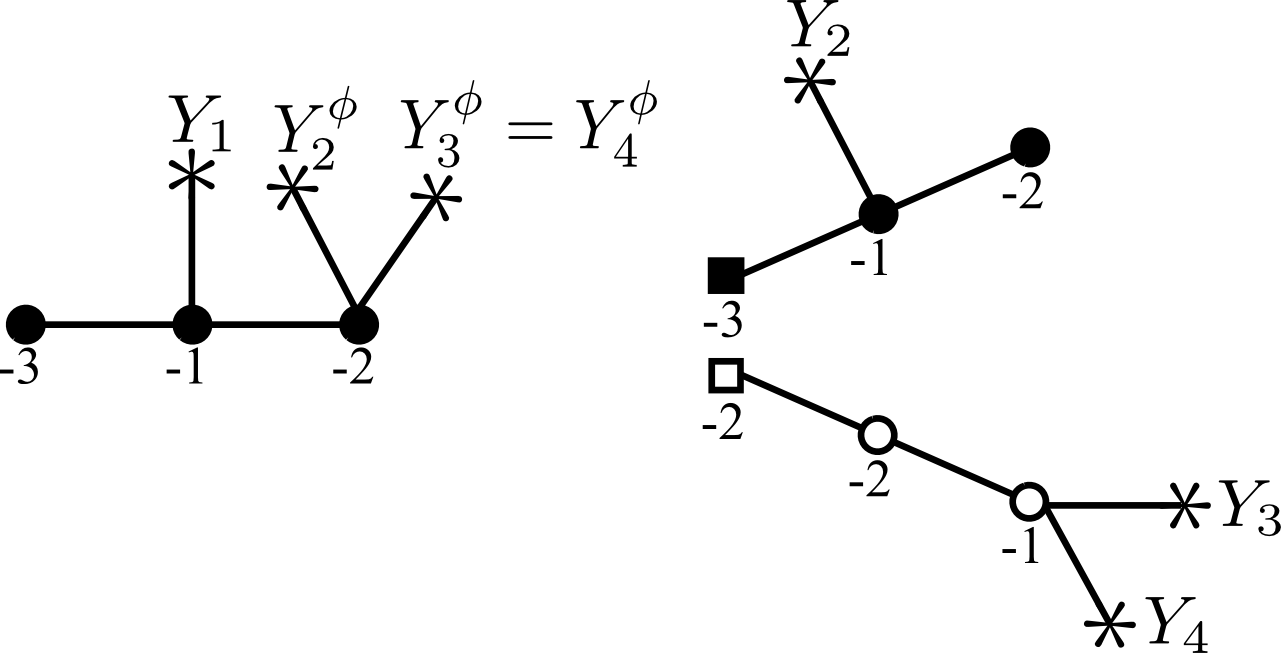}
\label{FIGEXE4D}}\\
\subfloat[Another decomposition of $\mathcal R_{Y}$]{
\includegraphics{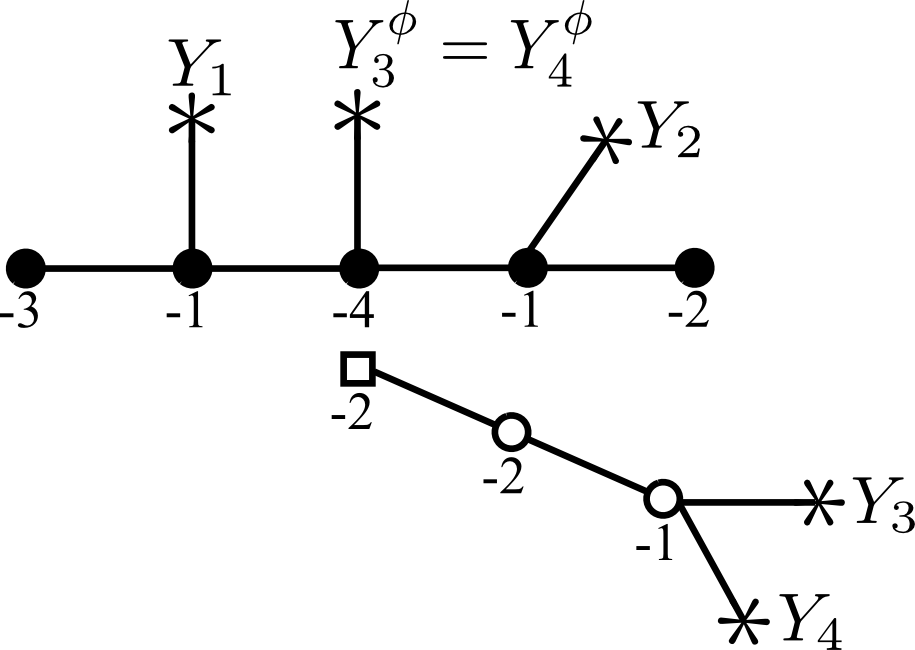}
\label{FIGEXE4A1}}
\caption{Example \ref{EXE4}. \label{F4}}
\end{figure}

\end{example}

\bibliographystyle{amsplain}

\newpage
\noindent
Jo\~ao Cabral  CMAF, Universidade de Lisboa and Departamento de Matem\'atica, Faculdade de Ci\^{e}ncias e Tecnologia, Universidade Nova de Lisboa \\
\email{joao.cabral.70@gmail.com}

\medskip\noindent
Orlando Neto  CMAF and Faculdade de Ci\^{e}ncias, Universidade de Lisboa  \\
\email{orlando60@gmail.com}

\medskip\noindent
Pedro C. Silva   Centro de Estudos Florestais and Departamento de Ci\^encias e Engenharia
de Biossistemas, Inst. Superior de Agronomia, Universidade de Lisboa \\
\email{pcsilva@isa.utl.pt}
 
 \medskip\noindent
This research was partially supported by FEDER and FCT-Plurianual 2011.

\end{document}